\newtheorem{thm}{Theorem}[section]
\newtheorem{cor}[thm]{Corollary}
\newtheorem{lem}[thm]{Lemma}
\newtheorem{prop}[thm]{Proposition}
\theoremstyle{definition}
\newtheorem{note}[thm]{Note}
\newcommand{\be}{\begin{equation}}
\newcommand{\ee}{\end{equation}}
\newcommand{\ol}{\overline}
\newcommand{\R}{\mathbf{R}}
\newcommand{\mc}[1]{\mathcal{#1}}
\newcommand{\dt}{\frac d {dt}}
\newcommand{\C}{\mathcal{C}}
\renewcommand{\epsilon}{\varepsilon}
\renewcommand{\S}{\mathbf{S}}
\renewcommand{\tilde}{\widetilde}
\renewcommand{\P}{\mathcal{P}}
\DeclareMathOperator{\inte}{int}
\DeclareMathOperator{\conv}{conv}
\DeclareMathOperator{\dist}{dist}
\title{Shortest Closed Curve to Inspect a Sphere}
\author{Mohammad Ghomi}
\address{School of Mathematics, Georgia Institute of Technology,
Atlanta, GA 30332}
\email{ghomi@math.gatech.edu}
\urladdr{www.math.gatech.edu/~ghomi}
\author{James Wenk}
\address{School of Mathematics, Georgia Institute of Technology,
Atlanta, GA 30332}
\email{jwenk3@math.gatech.edu}
\urladdr{www.math.gatech.edu/~jwenk3}
\date{\today \,(Last Typeset)}
\subjclass[2000]{Primary: 53A04,  52A40; Secondary: 52A38, 58E}
\keywords{Convex hull,  Rectifiable curve,   Crofton's formulas, Inradius, Unfolding.}
\thanks{Research of M.G. was supported in part by NSF Grant DMS--1711400 and a Simons fellowship.}
\begin{document}

\vspace*{-1in}
\maketitle

\vspace*{-0.25in}

\begin{abstract}
We show that in Euclidean 3-space any closed curve $\gamma$  which lies outside the unit sphere  and contains the sphere within its convex hull has length $\geq4\pi$. Equality holds  only when $\gamma$ is composed of $4$ semicircles  of  length  $\pi$, arranged in the shape of a baseball seam, as conjectured by  
V. A. Zalgaller in 1996. 
\end{abstract}


\section{Introduction}
What is the shortest closed orbit  a satellite may take to inspect  the entire surface of a round asteroid?
This is a well-known  optimization problem \cite{zalgaller:1996,orourkeMO,hiriart:2008,ghomi:lwr,cfg,finch&wetzel} in classical differential geometry and convexity theory, which may be precisely formulated  as follows.
A curve $\gamma$ in Euclidean space $\R^3$ \emph{inspects} a sphere $S$ provided that it lies outside $S$ and each point $p$ of $S$ can be ``seen" by some point $q$ of $\gamma$, i.e., the line segment $pq$ intersects $S$ only at $p$. It is easily shown that the latter condition holds if and only if $S$ lies in the convex hull of $\gamma$. The supremum of the radii of the spheres which are contained in the convex hull of $\gamma$ and are disjoint from $\gamma$  is called the \emph{inradius} of $\gamma$. Thus  we seek the shortest closed curve with a given inradius. The answer is as follows:

\begin{thm}\label{thm:main}
Let $\gamma\colon[a,b]\to\R^3$ be a closed rectifiable curve of length $L$ and inradius $r$. Then
\begin{equation}\label{eq:main}
L\geq 4\pi r.
\end{equation}
Equality holds only if, up to a reparameterization, $\gamma$ is  simple, $\C^{1,1}$, lies on a sphere of radius $\sqrt2 \,r$, and traces consecutively $4$ semicircles of length $\pi r$.
\end{thm}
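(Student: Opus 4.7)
We may rescale so that $r=1$ and take $S$ to be the unit sphere at the origin, so that $|\gamma(s)|\geq 1$ for all $s$ and $S\subset\conv(\gamma)$. My plan has three parts: recast $L(\gamma)$ as an integral-geometric quantity on $S^2$, obtain a pointwise lower bound for the integrand via an unfolding argument that uses both hypotheses together, and analyze equality.

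For the first part, Cauchy's projection formula gives $L(\gamma)=\frac{1}{2\pi}\int_{S^2} TV(\pi_v\gamma)\,d\sigma(v)$, where $\pi_v\gamma$ denotes the scalar projection of $\gamma$ onto the line spanned by $v$ and $TV$ is total variation. The hypothesis $S\subset\conv(\gamma)$ forces width $\geq 2$ in every direction, hence $TV(\pi_v\gamma)\geq 4$, which yields only $L(\gamma)\geq 8$. To bridge the gap to $4\pi$ one must record not only how far $\gamma$ extends in each direction but also how its subarcs behave relative to the planes through the origin that slice $S$ in great circles.

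The key step is the \emph{unfolding} lemma suggested by the paper's keywords. For each oriented plane $\Pi$ through the origin, and each subarc of $\gamma$ lying on one side of $\Pi$, one isometrically develops that subarc into $\Pi$, anchored at its endpoints on $\Pi$. The exterior hypothesis $|\gamma|\geq 1$ contributes a definite vertical excess to the developed length, while the convex-hull hypothesis forces the collection of developed arcs to cover the unit disk $\Pi\cap \bar B^3$ in its convex hull. A sharp planar length estimate for such developed curves, integrated against the natural measure on planes through the origin, should yield the sharp constant $4\pi$. For equality, pointwise sharpness in the unfolding forces $|\gamma|\equiv\sqrt 2$ (the unique radius at which the unfolding inequality is tight), so $\gamma$ lies on the sphere of radius $\sqrt 2$; sharpness in the planar estimate then forces each developed arc to be a circular arc of radius $1$, and $\C^{1,1}$ matching at junctions identifies the four arcs as the baseball seam.

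The main obstacle is the unfolding step. Each hypothesis alone -- ``outside $S$'' or ``contains $S$'' -- yields only the Cauchy-type bound $L\geq 8$; their combined effect, captured by unfolding, is what supplies the missing factor $\pi/2$. Proving the unfolding inequality with the correct sharp constant, and verifying that equality holds exclusively in the baseball-seam configuration, is the heart of the proof.
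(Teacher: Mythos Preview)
Your proposal is not a proof but an outline that explicitly defers its central step. You yourself write that ``proving the unfolding inequality with the correct sharp constant\dots is the heart of the proof,'' and you do not prove it. The per-plane ``unfolding'' you describe---developing the subarcs on one side of each plane $\Pi$ through $o$ into $\Pi$---is not defined precisely, the ``sharp planar length estimate'' is never stated, and there is no argument that integrating it over planes yields $4\pi$. As you observe, Cauchy's projection formula gives only $L\geq 8$; nothing in the write-up closes the gap to $4\pi$.

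The paper's approach is organized around a different integral-geometric quantity which you are missing: the \emph{horizon}
\[
H(\gamma)=\int_{p\in\S^2}\#\gamma^{-1}(T_p\S^2)\,dp,
\]
counting intersections of $\gamma$ with tangent planes to $\S^2$ rather than with planes through $o$. The convex-hull hypothesis gives $H(\gamma)\geq 8\pi$ directly, so the inequality $L\geq 4\pi$ is equivalent to the pointwise efficiency bound $E(\gamma):=H(\gamma)/L(\gamma)\leq 2$. An explicit integral formula shows that $E$ depends only on the height function $|\gamma|$ and the angle $\alpha=\angle(\gamma,\gamma')$, which is what makes the \emph{cone} unfolding (a single global radial development $\tilde\gamma(t)=|\gamma(t)|e^{i\theta_\gamma(t)}$, not your plane-by-plane one) the right reduction: it preserves $E$. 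On a minimal inspection curve the unfolded planar curve is locally convex with respect to $o$, hence decomposes into ``spirals,'' and a variational argument on polygonal spirals gives $E\leq 2$ for each piece. The equality analysis is also substantially more involved than ``sharpness forces $|\gamma|\equiv\sqrt2$'': once constant height $\sqrt2$ is established, the paper uses a Crofton--Blaschke--Santal\'o formula to show almost every circle $C_{\pi/4}(p)$ meets $\gamma$ exactly twice, then derives simplicity, double positive support, $\C^{1,1}$ regularity via positive reach, and finally the four-semicircle structure from a nested-partition argument. None of these ingredients is present, or recoverable from, your sketch.
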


\begin{figure}[h]
\begin{overpic}[height=1.25in]{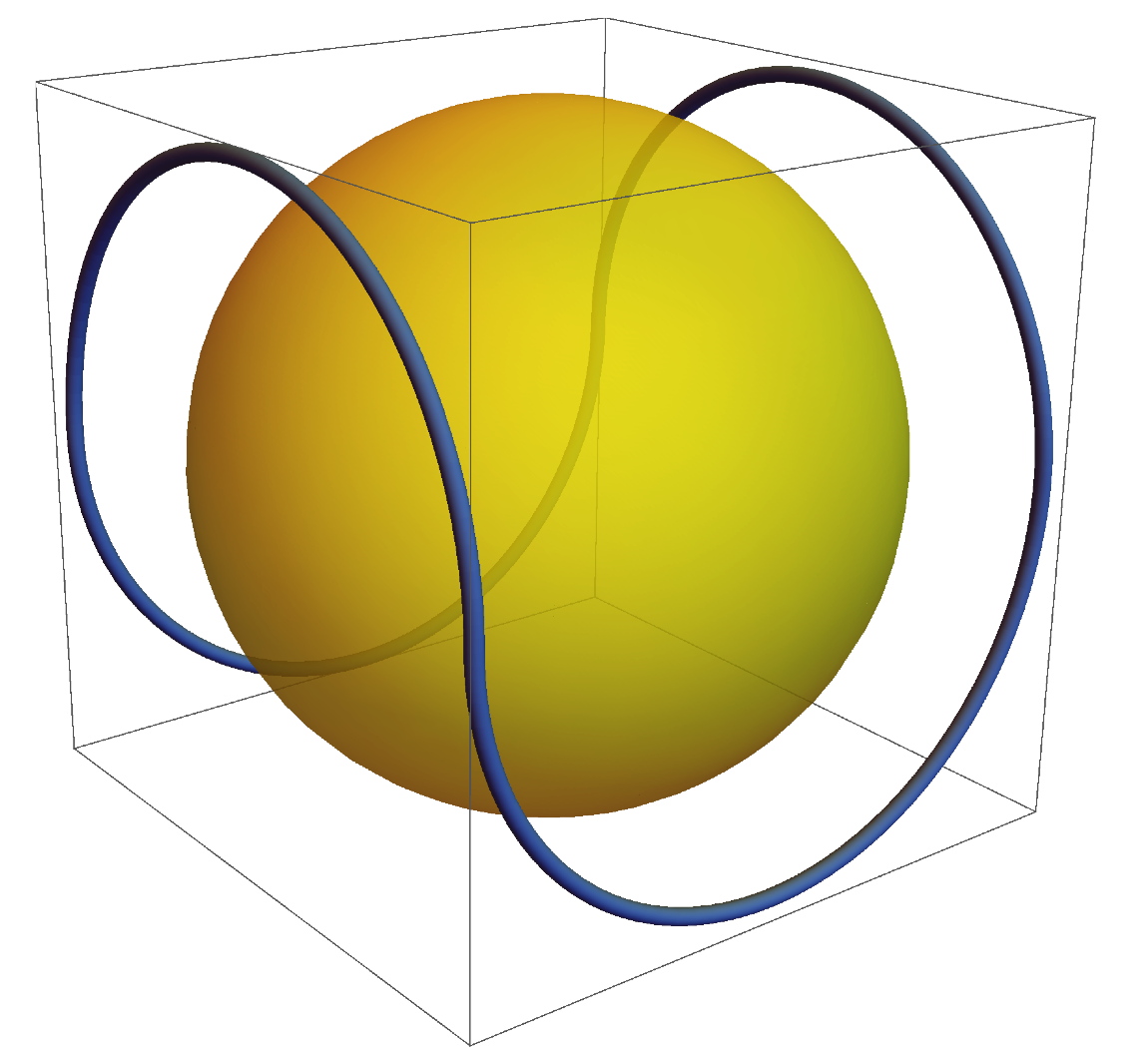}
\end{overpic}
\caption{}\label{fig:baseball}
\end{figure}

It follows that the image of the minimal  curve is unique up to a rigid motion, and
resembles the shape of a baseball seam as shown in Figure \ref{fig:baseball}, which settles a conjecture of Viktor Zalgaller made in 1996 \cite{zalgaller:1996}.  The previous best estimate was  $L\geq 6\sqrt{3}\,r$  obtained in 2018 \cite{ghomi:lwr}.  Here we use some notions from \cite{ghomi:lwr} together with other techniques from integral geometry (Crofton type formulas), geometric knot theory (unfoldings of space curves), and geometric measure theory (tangent cones, sets of positive reach) to establish the above theorem. We also derive several formulas (Sections \ref{sec:integral}, \ref{sec:horizon}, and \ref{sec:geq-sqrt2}) for efficiency of curves, which may be verified with a software package that we have provided \cite{ghomi-wenk:Mathematica2}.

Our main approach for proving Theorem \ref{thm:main} is as follows. Since \eqref{eq:main} is invariant under rescaling and rigid motions,  we may assume that $r=1$ and $\gamma$ inspects the unit sphere $\S^2$, in which case we say simply that $\gamma$ is an \emph{inspection curve}. Then we define the \emph{horizon} of  $\gamma$ (Section \ref{sec:integral}) as the measure in $\S^2$ counted with multiplicity of the set of points $p\in\S^2$ where the tangent plane $T_p\S^2$ intersects $\gamma$:
$$
H(\gamma):=\int_{p\in\S^2} \#\gamma^{-1}(T_p\S^2)\, dp.
$$
Since $\gamma$ is closed, $T_p\S^2$ intersects $\gamma$ at least twice for almost every $p\in\S^2$. Thus $H(\gamma)\geq 8\pi$. Next we define the \emph{(inspection) efficiency} of  $\gamma$ as 
\be\label{eq:E}
E(\gamma):=\frac{H(\gamma)}{L(\gamma)}.
\ee
So to establish \eqref{eq:main} it suffices to show that $E(\gamma)\leq 2$. Now note that, since $H$ is additive, for any partition of  $\gamma$ into subsets $\gamma_i$,  $i\in I$, 
\be\label{eq:EHL}
E(\gamma)= \sum_i \frac{H(\gamma_i)}{L(\gamma)}=\sum_i \frac{L(\gamma_i)}{L(\gamma)} E(\gamma_i)\leq \sup_i E(\gamma_i) .
\ee
So the desired upper bound  for $E(\gamma)$ may be established through a partitioning of $\gamma$ into subsets $\gamma_i$ with $E(\gamma_i)\leq 2$.  

To find the desired partition, we may start by assuming that $\gamma$ in Theorem \ref{thm:main} has minimal length among all  (closed) inspection curves, and is parameterized with constant speed (Section \ref{sec:prelim}). Then we apply an ``unfolding" procedure \cite{cks2002} to transform $\gamma$ into a planar curve $\tilde\gamma$ with the same arclength and  \emph{height}, i.e., radial distance function from the origin $o$ of $\R^3$ (Section \ref{sec:unfolding}). It follows that $E(\gamma)=E(\tilde\gamma)$. Furthermore, the minimality of $\gamma$ will ensure that $\tilde\gamma$ is ``locally convex with respect to o" \cite{ghomi2019}. Consequently $\tilde\gamma$ may  be partitioned into a collection of curves $\tilde\gamma_i$ we call
spirals (Section \ref{sec:decomposition}). A \emph{spiral} is a planar curve which lies outside the unit circle $\S^1$,  is locally convex with respect to $o$, has monotone height, and is orthogonal to the position vector of its closest boundary point to $o$. We will show that  the efficiency of any spiral is at most $2$ by polygonal approximations  and a variational argument (Section \ref{sec:inequality}), which establish \eqref{eq:main}. 

The rest of the paper will be devoted to characterizing minimal inspection curves.  First we note that equality holds in \eqref{eq:main} only when $E(\gamma)=2$, which forces all the  spirals  $\tilde\gamma_i$ to have efficiency $2$ as well. Then we show that a spiral has efficiency $2$ only when it has constant height $\sqrt2$  (Section \ref{sec:leq-sqrt2}) by refining the variational procedure used earlier to establish \eqref{eq:main}. 
Once we know that  any minimal inspection curve $\gamma$ has constant height $\sqrt{2}$, we proceed to the final stages of the characterization (Section \ref{sec:proof}). 
The simplicity of $\gamma$  follows from a Crofton type formula of Blaschke-Santalo. Then a characterization of $\C^{1,1}$ submanifolds in terms of their tangent cones \cite{ghomi-howard2014} and reach ensures the regularity of $\gamma$. Finally we show that $\gamma$ is composed of $4$ semicircles by constructing a ``nested partition" of $\gamma$ \cite{ghomi:rosenberg}, which completes the proof of Theorem \ref{thm:main}. The last technique goes back to the proofs of the classical $4$-vertex theorem due to Kneser and Bose \cite{hkneser,bose}, which has been  developed further by Umehara and Thorbergsson \cite{umehara2, thorbergsson&umehara}.
 
 The question we study in this work belongs to a circle of long standing optimization problems for the length of a curve in Euclidean space subject to various constraints on its convex hull, including bounds on volume, surface area, width, and inradius \cite{ghomi:lwr,zalgaller:1994,zalgaller:1996,zalgaller:2003,finch2019translation,Schoenberg:convexhull} \cite[A28, A30]{cfg}. Most of these problems remain open; see \cite{ghomi:lwr,tilli2010} for more background and references. We should also note that these problems may be posed both for closed and open curves. In the latter case, there are connections to the ``lost in a forest problem" of Bellman \cite{bellman:1956}, or its dual version, Moser's ``worm problem" \cite{brass-moser-pach2005, fassler-orponen2018, norwood-poole2003}, which are well-known in computational geometry.

 \section{Preliminaries: Minimal Inspection Curves}\label{sec:prelim}
 The central objects of study in this work are rectifiable curves, which become Lipschitz mappings after reparameterization with constant speed. We begin by recording  basic facts we need
 in this regard; see \cite{cesari1958},  \cite[Chap. 2]{bbi:book}, or \cite[chap. 4]{ambrosio-tilli2004} for more background.  
  Here $\R^n$ is the $n$-dimensional Euclidean space with origin $o$, inner product $\langle\cdot,\cdot\rangle$, and norm $|\cdot|:=\langle\cdot,\cdot\rangle^\frac{1}{2}$; $\S^{n-1}$, $B^n$ denote respectively the unit sphere and closed unit ball in $\R^n$.  The \emph{interior}, \emph{closure}, and \emph{boundary} of any set $X\subset\R^n$ is denoted by $\inte(X)$, $\ol X$, and $\partial X$ respectively. 
  A \emph{curve} is a continuous map $\gamma\colon[a,b]\to\R^n$, where $[a,b]\subset\R$ is an interval with $a<b$. We will also use $\gamma$ to refer to its image, $\gamma([a,b])$. We say that $\gamma$ is \emph{closed} if $\gamma(a)=\gamma(b)$. A closed curve $\gamma$ is \emph{simple} if it is one-to-one on $[a,b)$, and is  $\C^1$ provided that it is continuously differentiable with $\gamma'_+(a)=\gamma'_-(b)$; $\gamma$ is $\C^{1,1}$ if $\gamma'$ is Lipschitz. 
  The \emph{length} of $\gamma$ is  
$
L(\gamma):=\sup\sum_{i=1}^n \big|\gamma(t_i)-\gamma(t_{i-1})\big|,
$
where the supremum is taken over all partitions $a:=t_0\leq t\leq\dots\leq t_n:=b$ of $[a,b]$; $\gamma$ is \emph{rectifiable} if $L(\gamma)<\infty$, and has \emph{constant speed} $C$ if 
$
L(\gamma|_{[t,s]})=C\,|t-s|
$
for all  $t<s\in [a,b]$. 
A curve $\widehat\gamma\colon [a,b]\to\R^n$ is a \emph{reparameterization} of $\gamma$ if there is a nondecreasing continuous map $\phi\colon [a,b]\to[a,b]$ with $\gamma=\widehat\gamma\circ\phi$.  It is  well-known \cite[Prop. 2.5.9]{bbi:book} that any rectifiable curve  admits a  reparameterization with constant speed.
 If $\gamma\colon[a,b]\to\R^n$ has constant speed $C$, then for all $t<s\in[a,b]$,
\be\label{eq:const-speed}
|\gamma(t)-\gamma(s)|\leq L\big(\gamma\big|_{[t,s]}\big)= C|t-s|.
\ee
So
$\gamma$ is \emph{$C$-Lipschitz}, and therefore differentiable almost everywhere  by Rademacher's theorem. Then 
$
L(\gamma)=\int_a^b |\gamma'(t)|\,dt
$
\cite[Thm. 2.7.6]{bbi:book}.
Furthermore, \eqref{eq:const-speed} implies  that $|\gamma'|\leq C$ at all differentiable points of $\gamma$. On the other hand,  $\int_a^b |\gamma'(t)|\,dt/(b-a)=L(\gamma)/(b-a)=C$. Thus $|\gamma'|=C$ almost everywhere. So we record:

\begin{lem}\label{lem:basic}
Let $\gamma\colon[a,b]\to\R^n$ be a rectifiable curve. Then 
$\gamma$ has constant speed  if and only if $|\gamma'|=L(\gamma)/(b-a)$ almost everywhere.
\end{lem}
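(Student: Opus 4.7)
The forward implication is essentially contained in the paragraph preceding the lemma: if $\gamma$ has constant speed $C$, then the Lipschitz inequality $|\gamma(t)-\gamma(s)|\leq C|t-s|$ forces $|\gamma'|\leq C$ wherever $\gamma$ is differentiable; combining with $\int_a^b|\gamma'(t)|\,dt = L(\gamma) = C(b-a)$ gives $|\gamma'|=C$ almost everywhere, and clearly $C = L(\gamma)/(b-a)$.

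For the converse, assume $|\gamma'| = C := L(\gamma)/(b-a)$ a.e. The plan is to introduce the arclength function
\[
\ell(t) := L\bigl(\gamma\big|_{[a,t]}\bigr), \qquad t\in[a,b],
\]
and show it satisfies $\ell(t)=C(t-a)$, which is exactly the constant-speed condition. Since $\ell$ is nondecreasing it is differentiable a.e.\ by the Lebesgue differentiation theorem, with $\ell(b)-\ell(a)\geq \int_a^b \ell'(t)\,dt$. At any point $t$ where both $\ell$ and $\gamma$ are differentiable, the elementary estimate
\[
|\gamma(t+h)-\gamma(t)| \leq L\bigl(\gamma\big|_{[t,t+h]}\bigr) = \ell(t+h)-\ell(t)
\]
(for $h>0$) yields $|\gamma'(t)|\leq \ell'(t)$ after dividing by $h$ and letting $h\to 0^+$.

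Combining these facts gives the chain
\[
L(\gamma) = \ell(b)-\ell(a) \;\geq\; \int_a^b \ell'(t)\,dt \;\geq\; \int_a^b |\gamma'(t)|\,dt \;=\; C(b-a) \;=\; L(\gamma),
\]
so every inequality is an equality. The equality $\ell(b)-\ell(a)=\int_a^b\ell'(t)\,dt$ forces $\ell$ to be absolutely continuous, and the equality of integrands (together with $\ell'\geq|\gamma'|$) forces $\ell'=|\gamma'|=C$ almost everywhere. Hence
\[
\ell(s)-\ell(t) = \int_t^s \ell'(u)\,du = C(s-t)
\]
for every $t<s$ in $[a,b]$, which is the constant-speed condition.

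The only subtle point is the first-variation inequality $\ell'\geq |\gamma'|$ a.e.\ and the passage from equality of integrals to absolute continuity of $\ell$; both are standard consequences of the Lebesgue differentiation theorem applied to monotone functions, so no novel difficulty arises. The argument uses only general properties of rectifiable curves and does not need any structure specific to $\R^n$.
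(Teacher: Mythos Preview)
Your forward direction is exactly the paper's argument (indeed the paper's paragraph preceding the lemma is precisely that computation, and the lemma is introduced with ``So we record:'' with no separate proof environment). The paper does not spell out the converse at all; your argument via the arclength function $\ell(t)=L(\gamma|_{[a,t]})$ is correct and fills this in. The key steps---that $\ell'\geq|\gamma'|$ a.e.\ for any rectifiable curve, and that a nondecreasing function $\ell$ satisfying $\ell(b)-\ell(a)=\int_a^b\ell'$ must be absolutely continuous (equivalently, its singular part vanishes)---are standard facts from real analysis, so the proof is complete. One could shorten the final step slightly: once $\ell'\equiv C$ a.e.\ and $\ell(b)-\ell(a)=C(b-a)$, the inequalities $\ell(t)-\ell(a)\geq C(t-a)$ and $\ell(b)-\ell(t)\geq C(b-t)$ (Lebesgue on each subinterval) sum to an equality, forcing each to be an equality; this avoids invoking the absolute-continuity characterization explicitly, but your route is equally valid.
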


\noindent In particular, when $L(\gamma)>0$,  we may assume after reparameterization  with constant speed that 
$|\gamma'|\neq 0$ almost everywhere.
Let $\C^0([a,b],\R^n)$ denote the space of curves $\gamma\colon[a,b]\to\R^n$ with the supremum norm or \emph{uniform metric} \cite[p. 47]{bbi:book} given by
\be\label{eq:sup-norm}
\dist(\gamma_1,\gamma_2):=\sup_{t\in[a,b]}|\gamma_1(t)-\gamma_2(t)|.
\ee
The functional $L\colon \C^0([a,b],\R^n)\to \R$ is lower semi-continuous \cite[Prop. 2.3.4(iv)]{bbi:book}.
The \emph{convex hull} of a set $X\subset\R^n$, $\conv(X)$, is the intersection of all closed half-spaces containing $X$. We say $\gamma\colon[a,b]\to\R^3$ is an \emph{inspection curve} if it is closed and $\S^2\subset\conv(\gamma)$.  Lemmas  \ref{lem:basic} together with Arzela-Ascoli theorem \cite[Thm. 2.5.14]{bbi:book} and semicontinuity of $L$ yield \cite[Prop. 2.3]{ghomi-wenk-arXiv2020}:

\begin{prop}\label{prop:minimizer}
There exists an inspection curve of minimum length.
\end{prop}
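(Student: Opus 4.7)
The plan is to apply the direct method of the calculus of variations, using the tools recorded just above the statement (constant-speed reparameterization, Arzel\`a–Ascoli, and lower semicontinuity of $L$). Let $\ell:=\inf L(\gamma)$ over all inspection curves; this infimum is finite, since explicit examples exist (e.g.\ a ``baseball seam'' of four semicircles on the sphere of radius $\sqrt2$ is an inspection curve of length $4\pi\sqrt2$). Pick a minimizing sequence $(\gamma_n)$ with $L(\gamma_n)\to\ell$. By Lemma \ref{lem:basic} I reparameterize each $\gamma_n$ on $[0,1]$ with constant speed $L(\gamma_n)$, so the $\gamma_n$ are $M$-Lipschitz for a common bound $M:=\ell+1$ (for $n$ large). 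This gives equicontinuity for free.

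The main obstacle is preventing the minimizing sequence from escaping to infinity, i.e.\ producing a uniform spatial bound so that Arzel\`a–Ascoli applies. This I would do as follows. Since $\gamma_n$ is closed of length $\leq M$, its diameter is $\leq M/2$. Since $o\in B^3\subset\conv(\gamma_n)$, Carath\'eodory gives points $p_1,\dots,p_4\in\gamma_n$ and weights $\lambda_i\geq 0$ with $\sum\lambda_i=1$ and $o=\sum_i\lambda_i p_i$. Then for each $j$,
\[
|p_j|=\Big|\sum_i\lambda_i(p_j-p_i)\Big|\leq \sum_i\lambda_i\,\mathrm{diam}(\gamma_n)\leq M/2,
\]
and the triangle inequality propagates this to every point of $\gamma_n$: $|q|\leq |q-p_1|+|p_1|\leq M$. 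Hence all $\gamma_n$ lie in the closed ball of radius $M$ about $o$, and by Arzel\`a–Ascoli \cite[Thm. 2.5.14]{bbi:book} a subsequence converges in the uniform metric \eqref{eq:sup-norm} to some $\gamma\colon[0,1]\to\R^3$.

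It remains to verify that $\gamma$ is an inspection curve achieving the infimum. Closedness is immediate from uniform convergence: $\gamma(0)=\lim\gamma_n(0)=\lim\gamma_n(1)=\gamma(1)$. For the convex-hull condition, uniform convergence implies Hausdorff convergence of the images $\gamma_n\to\gamma$, and the convex hull operator is continuous with respect to Hausdorff convergence on compact subsets of $\R^3$; since $\S^2\subset\conv(\gamma_n)$ for every $n$ and $\conv(\gamma)$ is closed, we conclude $\S^2\subset\conv(\gamma)$. Finally, lower semicontinuity \cite[Prop.\ 2.3.4(iv)]{bbi:book} gives
\[
L(\gamma)\leq\liminf_n L(\gamma_n)=\ell,
\]
while $L(\gamma)\geq\ell$ because $\gamma$ is itself an inspection curve. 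Thus $L(\gamma)=\ell$, completing the proof. The only delicate point is the uniform spatial bound in Step 2; everything else is a routine compactness/semicontinuity argument.
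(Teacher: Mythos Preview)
Your argument is correct and follows exactly the approach indicated in the paper (direct method via constant-speed reparameterization, Arzel\`a--Ascoli, and lower semicontinuity of $L$), with the added care of spelling out the uniform spatial bound from $o\in\conv(\gamma_n)$ via Carath\'eodory. One trivial slip: the baseball seam has length $4\pi$, not $4\pi\sqrt2$ (the four semicircles have radius $1$, not $\sqrt2$), but any finite upper bound on $\ell$ suffices here.
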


Any curve given by the above proposition will be called a \emph{minimal} inspection curve. Let $\angle(v,w):=\cos^{-1}(\langle v, w\rangle/(|v||w|))$ denote the \emph{angle} between $v$, $w\in\R^n\setminus\{o\}$. For any rectifiable  curve $\gamma\colon[a,b]\to\R^n\setminus\{o\}$, with $L(\gamma)>0$, we set
$$
\alpha(t):=\angle \big(\gamma(t),\gamma'(t)\big).
$$
By Lemma \ref{lem:basic}, if $\gamma$ has constant speed, then  $|\gamma'|\neq 0$ almost everywhere. So
$\alpha$ is well-defined for almost every $t\in[a,b]$.  The \emph{tangent cone} $T_t\gamma$ of $\gamma$ at  $t\in[a,b]$ is the collection of all rays emanating from $\gamma(t)$ which are limits of a sequence of secant lines emanating from $\gamma(t)$ and passing through points $\gamma(s_i)$ as $s_i$ converge to $t$. If $\gamma$ is closed and $t=a$ or $b$, then we set $T_t\gamma:=T_a\gamma\cup T_b\gamma$. See \cite[Sec. 2]{ghomi-howard2014} for basic facts on tangent cones.
If $T_t\gamma$ is a line, then we call it the \emph{tangent line} of $\gamma$ at $t$. When $\gamma$ is differentiable at $t$ and $|\gamma'(t)|\neq 0$, $T_{t}\gamma$ is the line through $\gamma(t)$ spanned by $\gamma'(t)$. The following lemma generalizes an earlier observation   \cite[Lem. 7.4]{ghomi:lwr} for  polygonal curves. 

\begin{lem}\label{lem:alpha}
Let $\gamma\colon [a,b]\to \R^3$ be a constant speed minimal inspection curve. Then  tangent lines of $\gamma$ avoid   $\inte(B^3)$. In particular, for almost every $t\in[a,b]$,
\be\label{eq:sine-alpha}
\alpha(t)\geq\sin^{-1}\big( 1/|\gamma(t)|\big).
\ee
\end{lem}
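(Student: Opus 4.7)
My plan is to prove the tangent line claim by contradiction via a local shortening of $\gamma$; the inequality \eqref{eq:sine-alpha} then follows by a direct trigonometric identity.

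For the latter, by Lemma \ref{lem:basic} we have $|\gamma'(t)|=L(\gamma)/(b-a)>0$ almost everywhere, so at such $t$ the tangent line exists and equals $\gamma(t)+\R\gamma'(t)$. Its distance from $o$ is $|\gamma(t)\times\gamma'(t)|/|\gamma'(t)|=|\gamma(t)|\sin\alpha(t)$; requiring this to be at least $1$ is exactly $\sin\alpha(t)\ge 1/|\gamma(t)|$, equivalent to $\alpha(t)\ge\sin^{-1}(1/|\gamma(t)|)$ since $\alpha\in[0,\pi]$.

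For the main claim, suppose toward a contradiction that the tangent line $\ell$ at some $t_0$ enters $\inte(B^3)$. I first handle the boundary case: if $|\gamma(t_0)|=1$, then $\gamma(t_0)\in\S^2$, and since every nearby $\gamma(s)$ lies outside $\inte(B^3)$, the secant ray from $\gamma(t_0)$ through $\gamma(s)$ lies in the closed tangent half-space $H:=\{x:\langle x-\gamma(t_0),\gamma(t_0)\rangle\ge 0\}$, which is disjoint from $\inte(B^3)$. Passing to the limit yields $T_{t_0}\gamma\subset H$, contradicting $\ell\cap\inte(B^3)\ne\emptyset$. So $|\gamma(t_0)|>1$, and $\gamma(t_0)$ has a neighborhood $U\subset\R^3\setminus\ol{B^3}$.

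Next I pick $s_1<t_0<s_2$ with $\gamma([s_1,s_2])\subset U$ and replace $\gamma|_{[s_1,s_2]}$ by the chord $\sigma$ from $\gamma(s_1)$ to $\gamma(s_2)$, obtaining $\tilde\gamma\subset\R^3\setminus\inte(B^3)$ with $L(\tilde\gamma)\le L(\gamma)$. To get a strict inequality I would exploit that $\ell$ meets $\inte(B^3)$ while $\gamma$ does not, forcing $\gamma$ to bend away from $\ell$ at some nearby parameter value; a suitably chosen $[s_1,s_2]$ captures this bend, making arc length strictly exceed chord length. To preserve $\S^2\subset\conv(\tilde\gamma)$, I would use that supporting hyperplanes of $\conv(\gamma)$ tangent to $\S^2$ touch $\gamma$ at extremal points, and localize the modification (via compactness of $\S^2\cap\partial\conv(\gamma)$) so as to avoid disturbing these contacts. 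This contradicts the minimality guaranteed by Proposition \ref{prop:minimizer}.

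The main obstacle is coupling a strict length saving with preservation of the inspection constraint: larger modifications save more length but risk shrinking $\conv(\tilde\gamma)$ below $\S^2$, while very small chord shortcuts may save no length if $\gamma$ is locally straight along $\ell$. The polygonal analog \cite[Lem.~7.4]{ghomi:lwr} suggests managing this with a two-scale construction, with a macroscopic bend supplying the length saving while a microscopic perturbation near $\gamma(t_0)$ preserves inspection.
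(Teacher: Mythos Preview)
Your derivation of \eqref{eq:sine-alpha} from the tangent-line claim is fine, as is the boundary case $|\gamma(t_0)|=1$.

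The main argument, however, has the gap you yourself diagnose in the last paragraph, and your proposal does not close it. The plan to ``localize the modification so as to avoid disturbing'' extremal contacts between $\conv(\gamma)$ and $\S^2$ cannot work in general: nothing rules out that $\gamma(t_0)$, or an arc of positive length through it, lies on $\partial\conv(\gamma)$ and is essential for $\S^2\subset\conv(\gamma)$; for a minimal curve one should expect this to happen everywhere. On the other side, an interval short enough to fit in your neighborhood $U\subset\R^3\setminus\ol{B^3}$ may carry a straight piece of $\ell$, yielding zero saving. The ``two-scale construction'' is a label, not an argument.

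The paper dissolves the tension with a single device. Set $X:=\conv\big(\{\gamma(t_0)\}\cup B^3\big)\subset\conv(\gamma)$, the ice-cream cone with apex $\gamma(t_0)$. Since $T$ meets $\inte(B^3)\subset\inte(X)$, one tangent ray points into $\inte(X)$, so a one-sided open arc $\gamma(U)$ lies in $\inte(X)$ until the first exit $\gamma(s)\in\partial X$; now replace $\gamma|_{\ol U}$ by the chord $\gamma(t_0)\gamma(s)$. Because $\gamma(U)\subset\inte(X)\subset\inte(\conv(\gamma))$, removing it leaves the convex hull \emph{unchanged}, so $\S^2\subset\conv(\beta)$ is automatic --- no analysis of extremal contacts is needed. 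Strictness $L(\beta)<L(\gamma)$ is also automatic: were the arc equal to the chord it would be a segment of $T$ through $\inte(B^3)$, impossible since $\gamma$ avoids $\inte(B^3)$. The point is not to keep the modification small, but to make it exactly large enough that the discarded arc sits in the interior of the convex hull.
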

\begin{proof}
Let  $T$ be a tangent line of $\gamma$ at  $t\in[a,b]$. Suppose that $T$ intersects $\inte(B^3)$. Set $X:=\conv(\{\gamma(t)\}\cup B^3)$.
 Then $T$ intersects  $\inte(X)$. So there is a open interval $U\subset [a,b]$ of the form $(t, s)$ or $(s,t)$ such that $\gamma(U)\subset \inte(X)$, and $\gamma(s)$ lies on $\partial X$.  
 Let $\ol U$ be the closure of $U$.
 Replacing $\gamma(\ol U)$ with a  line segment connecting $\gamma(t)$ and $\gamma(s)$  yields a closed curve $\beta$ with $L(\beta)<L(\gamma)$. But $\conv(\beta)=\conv(\gamma)$, since $\gamma(U)\subset\inte(\conv(X))\subset\inte(\conv(\gamma))$.  In particular $\S^2\subset\conv(\beta)$. So $\beta$ is an inspection curve shorter than $\gamma$, which is a contradiction. Now \eqref{eq:sine-alpha} follows from basic trigonometry.
\end{proof}

\section{The Integral Formula for Efficiency}\label{sec:integral}
As mentioned in the introduction, the \emph{efficiency} of any rectifiable curve $\gamma\colon[a,b]\to\R^3$ with $|\gamma|\geq 1$ is defined as
$$
E(\gamma):=\frac{H(\gamma)}{L(\gamma)},\quad\quad\text{where}\quad\quad H(\gamma):=\int_{p\in\S^2} \#\gamma^{-1}(T_p\S^2)\, dp.
$$
Recall that $H(\gamma)$ is  called the \emph{horizon} of $\gamma$. When $\gamma$ is an inspection curve it follows from Caratheodory's convex hull theorem that 
$
\#\gamma^{-1}(T_p\S^2)\geq 2
$ 
for almost every $p\in \S^2$ \cite[Lemma 7.1]{ghomi:lwr}. Thus $H(\gamma)\geq 8\pi$. So to prove  \eqref{eq:main} it suffices to show that $E(\gamma)\leq 2$. To this end
we will use the area formula  to compute $E(\gamma)$. This generalizes previous work \cite[Sec. 7.2]{ghomi:lwr} where the following proposition had been established for $\C^{1,1}$ curves. Recall that $\alpha:=\angle(\gamma,\gamma')$.

\begin{prop}\label{prop:H}
Let $\gamma\colon[a,b]\to\R^3$ be a constant speed curve with $|\gamma|\geq 1$. Then
\begin{equation}\label{eq:H2}
E(\gamma)= \frac{1}{b-a}\int_a^b\int_0^{2\pi} \frac{1}{|\gamma|^2}\left| \sqrt{|\gamma|^2-1}\sin\big(\alpha\big)\cos(\theta)+\cos\big(\alpha\big) \right|\,d\theta dt.
\end{equation}
\end{prop}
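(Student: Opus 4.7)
The plan is to apply the area formula to a natural parametrization of the horizon circles. For fixed $t\in[a,b]$, the identity $T_p\S^2=\{x\in\R^3:\langle x,p\rangle=1\}$ shows that $\gamma(t)\in T_p\S^2$ iff $\langle p,\gamma(t)\rangle=1$, so the set of such $p$ is the circle $C_t\subset\S^2$ of colatitude $\arccos(1/d(t))$ around $u(t):=\gamma(t)/d(t)$, where $d:=|\gamma|$. Picking any measurable orthonormal frame $\{e_1(t),e_2(t)\}$ of $u(t)^\perp$, we introduce the Lipschitz map
$$
\Phi\colon[a,b]\times[0,2\pi]\to\S^2,\qquad \Phi(t,\theta):=\tfrac{1}{d}u+\tfrac{\sqrt{d^2-1}}{d}\bigl(\cos\theta\,e_1+\sin\theta\,e_2\bigr).
$$
By construction, for almost every $p\in\S^2$ the preimage $\Phi^{-1}(p)$ consists of one pair $(t,\theta)$ for each $t\in\gamma^{-1}(T_p\S^2)$, so the area formula for Lipschitz maps yields $H(\gamma)=\int_a^b\int_0^{2\pi}|\Jac\Phi(t,\theta)|\,d\theta\,dt$.

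The remainder is a direct Jacobian computation using a frame adapted to $\gamma$. Set $c:=L(\gamma)/(b-a)$, so $|\gamma'|=c$ a.e.\ by Lemma \ref{lem:basic}. Where $u'\neq0$, choose $e_1:=u'/|u'|$ and $e_2:=u\times e_1$; decomposing $\gamma'=d'u+du'$ gives $d'=c\cos\alpha$ and $|u'|=(c/d)\sin\alpha$, and differentiating the frame produces $e_1'=-(c\sin\alpha/d)u+\omega e_2$ and $e_2'=-\omega e_1$ for some scalar $\omega$. Writing $\partial_t\Phi=Au+Be_1+Ce_2$ and $\partial_\theta\Phi=Pe_1+Qe_2$ with $P^2+Q^2=(d^2-1)/d^2$, one computes $|\Jac\Phi|^2=(BQ-CP)^2+A^2(P^2+Q^2)$. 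The key point is that the nuisance term $\omega$ cancels in $BQ-CP$, leaving the clean identities
$$
A=-\tfrac{c}{d^2}\bigl[\cos\alpha+\sqrt{d^2-1}\sin\alpha\cos\theta\bigr],\qquad BQ-CP=-A/d.
$$
Hence $|\Jac\Phi|=|A|=(c/d^2)\,|\cos\alpha+\sqrt{d^2-1}\sin\alpha\cos\theta|$, and substituting into the area formula and dividing by $L=c(b-a)$ gives \eqref{eq:H2}.

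The main technical obstacles are twofold. First, $\gamma$ is only Lipschitz, so the area formula must be invoked in that generality; this is standard, and $\Phi$ is Lipschitz on $\{d\geq1+\epsilon\}$ for any $\epsilon>0$, while points with $d=1$ (if any) form a set whose $\Phi$-image has measure zero and whose horizon contribution is zero. Second, the adapted frame $e_1=u'/|u'|$ is undefined on the measure-zero set where $u'=0$ (equivalently $\sin\alpha=0$); however, the vanishing of $\omega$ in the final expression shows $|\Jac\Phi|$ is independent of the choice of frame in $u^\perp$, so any measurable extension across this null set leaves both sides of \eqref{eq:H2} unchanged.
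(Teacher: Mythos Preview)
Your strategy---parametrize the horizon circles, apply the area formula, compute the Jacobian in a frame adapted to $u'$---is exactly the paper's. The Jacobian calculation is correct, and the cancellation of $\omega$ leading to $|\Jac\Phi|=|A|$ is the same identity the paper finds.

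There is, however, a genuine regularity gap. You assert that $\Phi$ is Lipschitz after ``picking any measurable orthonormal frame,'' but a merely measurable frame does not make $\Phi$ Lipschitz, and your adapted choice $e_1=u'/|u'|$ is only bounded measurable when $\gamma$ is Lipschitz. More critically, your derivation of $B$, $C$, and hence $BQ-CP$ differentiates the frame, writing $e_1'=-(c\sin\alpha/d)u+\omega e_2$; this asks for $u''$, i.e.\ second derivatives of $\gamma$, which are not available. Your closing remark that the final formula is frame-independent does not repair this: that independence is a \emph{consequence} of the computation with a differentiable frame, not a justification for carrying it out with a non-differentiable one.

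The paper circumvents this by taking $e_1$ to be a $C^1$ unit tangent field on $\S^2\setminus\{x\}$ and setting $e_i(t):=e_i(\ol\gamma(t))$. The composition is Lipschitz in $t$, so $F$ is Lipschitz and the area formula applies; and by the chain rule $e_i'(t)=De_i(\ol\gamma(t))\ol\gamma'(t)$ exists almost everywhere using only \emph{first} derivatives of $\gamma$. Frame-independence of $E_\gamma(t_0)=\int_0^{2\pi}JF(t_0,\theta)\,d\theta$ is then established via Lebesgue differentiation of $H(\gamma|_{[t_0-\epsilon,t_0+\epsilon]})$, after which one is free, \emph{for each fixed} $t_0$, to pick a $C^1$ sphere-frame with $e_1(\ol\gamma(t_0))=\ol\gamma'(t_0)/|\ol\gamma'(t_0)|$. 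Your adapted computation then runs verbatim at that single point, with the needed regularity supplied by the sphere rather than the curve.
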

\begin{proof}
Let $\ol\gamma:={\gamma}/{|\gamma|}$. Since $\gamma$ is Lipschitz, $\ol\gamma$ is Lipschitz as well. So there exists a point $x\in \S^2\setminus \ol\gamma$.
Let $e_1$ be a $\C^1$ unit tangent vector field on $\S^2\setminus\{x\}$, and $e_2(p):=p\times u(p)$. Then $(e_1, e_2)$ is a Lipschitz  frame on any compact subset of $\S^2\setminus\{x\}$. So if we set
$
e_1(t):=e_1(\ol\gamma(t)), \; e_2(t):=e_2(\ol\gamma(t)),
$
then $t\mapsto (\ol\gamma(t), e_1(t), e_2(t))$ is a Lipschitz  frame along $\gamma$. Set
$
\lambda:=1/|\gamma|,\; \rho:=\sqrt{1-\lambda^2},
$
and define $F\colon[a,b]\times[0,2\pi]\to\S^2$ by
$$
F(t,\theta)=\lambda(t)\ol\gamma(t)+\rho(t)\big( \cos(\theta) e_1(t)+\sin(\theta)e_2(t)\big).
$$
Then $\theta\mapsto F(t,\theta)$ parameterizes the \emph{horizon circle} $H(\gamma(t))$, i.e., the set of points in $\S^2$ generated by  the rays which emanate from $\gamma(t)$ and are tangent to $\S^2$. So, for all $p\in\S^2$, 
$
F^{-1}(p)=\gamma^{-1}(T_p \S^2).
$
Thus, since $F$ is Lipschitz, the area formula \cite[Thm 3.2.3]{federer:book} yields
$$
H(\gamma)=\int_{p\in\S^2}\#F^{-1}(p)\,dp=\int_a^b\int_0^{2\pi}JF(t, \theta)\,d\theta dt,
$$
where $JF:=|\partial F/\partial t\times \partial F/\partial\theta|$ is the Jacobian of $F$. Next, for every differentiable point $t\in[a,b]$ of $\gamma$ let
$
E_\gamma(t):= \int_0^{2\pi}JF(t,\theta)\,d\theta.
$
By the Lebesgue differentiation theorem, for almost every $t\in[a,b]$,
$
E_\gamma(t)=\lim_{\epsilon\to 0}\frac{1}{2\epsilon}\int_{t-\epsilon}^{t+\epsilon}E_\gamma(s)\,ds=
\lim_{\epsilon\to 0}\frac{1}{2\epsilon}H(\gamma|_{[t-\epsilon,t+\epsilon]}).
$
So  $E_\gamma(t)$ does not depend on the choice of the frame $(e_1,e_2)$. We claim that for almost every point $t_0\in[a,b]$ of $\gamma$ we may choose a frame   so that
\be\label{eq:JF}
JF(t_0,\theta)= \frac{1}{|\gamma(t_0)|^2}\left| \sqrt{|\gamma(t_0)|^2-1}\sin\big(\alpha(t_0)\big)\cos(\theta)+\cos\big(\alpha(t_0)\big) \right|\,|\gamma'(t_0)|.
\ee
This would complete the proof because $E(\gamma)=(\int_a^b E_\gamma(t)\,dt)/L(\gamma)$, and since the speed is constant  $|\gamma'|=L(\gamma)/(b-a)$.
To establish \eqref{eq:JF}  note that if $t_0$ is a differentiable point of $\gamma$, then it is a differentiable point of $\ol\gamma$ as well. There are  two cases to consider: either $\ol\gamma'(t_0)\neq 0$ or 
$\ol\gamma'(t_0)= 0$.
First suppose that $\ol\gamma'(t_0)\neq 0$. Let $C$ be the great circle in $\S^2$ which is tangent to $\ol\gamma$ at $\ol\gamma(t_0)$. Set $e_1(\ol\gamma(t_0)):=\ol\gamma'(t_0)/|\ol\gamma'(t_0)|$. We may extend $e_1$ smoothly to a unit tangent vector field in a neighborhood of $\ol\gamma(t_0)$ on $\S^2$ so that $e_1(p)$ is tangent to $C$ when $p\in C$. Let $v:=|\ol\gamma'(t_0)|$. Then $\ol\gamma'(t_0)=ve_1(t_0)$, $e_1'(t_0)=-v\ol\gamma(t_0)$, and $e_2'(t_0)=0$.
Using these rules one may compute \cite{ghomi-wenk-arXiv2020,ghomi-wenk:Mathematica2} that  at $t=t_0$,
$
JF=|\rho v \cos(\theta)-\lambda'|
$
which is equivalent to \eqref{eq:JF}. If $\ol\gamma'(t_0)=0$, then for any choice of frame, $e_1'(t_0)=e_2'(t_0)=0$ by the chain rule. Then a computation \cite{ghomi-wenk-arXiv2020,ghomi-wenk:Mathematica2} shows that  $JF=|\gamma'|/|\gamma|^2$,  which establishes \eqref{eq:JF} since $\alpha=0$ or $\pi$.
\end{proof}

If $\gamma\colon[a,b]\to\R^3$ has constant  speed $C$, then
$
\big|\, |\gamma(s)|-|\gamma(t)|\,\big|\leq |\gamma(t)-\gamma(s)|\leq  C|t-s|.
$
So the function $|\gamma|\colon[a,b]\to\R$, which we call the \emph{height} of $\gamma$, is Lipschitz. In particular, $|\gamma|$ is differentiable almost everywhere. Furthermore note that if $t$ is a differentiable point of both $\gamma$ and $|\gamma|$, then 
$|\gamma|'=\langle \gamma,\gamma'\rangle/|\gamma|$ at $t$.
Thus for almost every $t\in[a,b]$
\be\label{eq:alpha-gamma-prime0}
\alpha(t)=\cos^{-1}(|\gamma|'(t)/C).
\ee
This shows, via  Proposition \ref{prop:H}, that $E(\gamma)$ depends only on $|\gamma|$. Hence we conclude

\begin{cor}\label{cor:lengthandheight}
Let $\gamma_1$, $\gamma_2\colon[a,b]\to\R^3$ be  constant speed curves with $L(\gamma_1)=L(\gamma_2)$. Furthermore suppose that $|\gamma_1(t)|=|\gamma_2(t)|\geq 1$ for all $t\in[a,b]$. Then $E(\gamma_1)=E(\gamma_2)$.
\end{cor}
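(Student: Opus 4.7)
The plan is to exploit the integral formula from Proposition \ref{prop:H} together with \eqref{eq:alpha-gamma-prime0} to show that the integrand in \eqref{eq:H2} is a function of $|\gamma|$ and $|\gamma|'$ alone (and the common speed), so two curves sharing these data must have the same efficiency.

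First I would note that since $L(\gamma_1)=L(\gamma_2)$ and both curves are parameterized on the same interval $[a,b]$ with constant speed, Lemma \ref{lem:basic} implies that they have the same speed $C:=L(\gamma_1)/(b-a)=L(\gamma_2)/(b-a)$. Next, the hypothesis $|\gamma_1(t)|=|\gamma_2(t)|$ for all $t\in[a,b]$ means the two height functions are identical as maps $[a,b]\to\R$; in particular $|\gamma_1|'(t)=|\gamma_2|'(t)$ at every point where either derivative exists, which is almost every $t\in[a,b]$.

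Then, applying \eqref{eq:alpha-gamma-prime0} to each curve (noting both have speed $C$), I obtain
\[
\alpha_1(t)=\cos^{-1}\!\big(|\gamma_1|'(t)/C\big)=\cos^{-1}\!\big(|\gamma_2|'(t)/C\big)=\alpha_2(t)
\]
for almost every $t\in[a,b]$. Consequently, at almost every $t$ the integrand
\[
\frac{1}{|\gamma_i(t)|^2}\left|\sqrt{|\gamma_i(t)|^2-1}\,\sin(\alpha_i(t))\cos(\theta)+\cos(\alpha_i(t))\right|
\]
appearing in formula \eqref{eq:H2} takes the same value for $i=1$ and $i=2$, for every $\theta\in[0,2\pi]$. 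Integrating against $d\theta\,dt$ and dividing by $b-a$ gives $E(\gamma_1)=E(\gamma_2)$, as desired.

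I do not anticipate a genuine obstacle here: the only subtlety is verifying that the hypothesis $|\gamma_1|=|\gamma_2|$ pointwise, combined with the common domain and common length, really does force the same speed and the same derivative of the height almost everywhere, so that the one-variable formula \eqref{eq:alpha-gamma-prime0} transfers the equality from $|\gamma|$ to $\alpha$. Once this is in hand, the corollary is an immediate consequence of Proposition \ref{prop:H}.
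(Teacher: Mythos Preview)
Your proposal is correct and follows essentially the same approach as the paper: the corollary is deduced directly from Proposition~\ref{prop:H} by observing that equal length on a common interval forces equal constant speed (Lemma~\ref{lem:basic}), and then \eqref{eq:alpha-gamma-prime0} shows $\alpha$---and hence the integrand in \eqref{eq:H2}---depends only on the common height function.
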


\section{Unfolding of Minimal Inspection Curves}\label{sec:unfolding}
Here we describe a natural ``unfolding" procedure \cite{cks2002} which transforms a space curve  into a planar one. This operation preserves the arclength and height of the curve, and thus preserves its efficiency due to the results of the last section. Furthermore we will show that the unfolding of any minimal inspection curve satisfies a certain convexity condition. Let $\gamma\colon[a,b]\to\R^3\setminus\{o\}$ be a rectifiable curve. We set $\ol\gamma:=\gamma/|\gamma|$, and let
$
\theta_\gamma(t):=L\big(\ol\gamma\big|_{[a,t]}\big)=\int_a^t|\ol\gamma'(t)|dt
$
denote the arclength function of $\ol\gamma$ ($\theta_\gamma$ measures the ``cone angle" \cite{cks2002} or ``vision angle" \cite{choe-gulliver1992, choe-gulliver2017} of $\gamma$ from the point of view of $o$).
The \emph{(cone) unfolding}   of $\gamma$ is the planar curve $\tilde\gamma\colon[a,b]\to\R^2$ given by
$
\tilde\gamma(t):=|\gamma(t)|e^{i\theta_\gamma(t)},
$
where $e^{i\theta_\gamma}=(\cos(\theta_\gamma), \sin(\theta_\gamma))$.
In other words, $\tilde\gamma$ is generated by the isometric immersion (or unrolling) into $\R^2$ of the conical surface generated by the line segments $o\gamma(t)$. Note that $|\tilde\gamma(t)|=|\gamma(t)|$. Assuming $\gamma$ is reparameterized with constant speed,
\be\label{eq:theta-prime}
\tilde\gamma'=(|\gamma|'+i|\gamma|\theta_\gamma')e^{i\theta_\gamma},\quad\quad\text{and}\quad\quad 
\theta_\gamma'=|\ol\gamma'|=\frac{1}{|\gamma|^2}\sqrt{|\gamma|^2|\gamma'|^2-\langle\gamma,\gamma'\rangle^2},
\ee
almost everywhere.
Thus it follows that, for almost all $t\in[a,b]$,
\begin{eqnarray*}
|\tilde\gamma'|^2= 
(|\gamma|' )^2+|\gamma|^2(\theta_\gamma')^2
=\frac{\langle\gamma,\gamma'\rangle^2}{|\gamma|^2}+\frac{1}{|\gamma|^2}\Big(|\gamma|^2|\gamma'|^2-\langle\gamma,\gamma'\rangle^2\Big)=|\gamma'|^2.
\end{eqnarray*}
So $\gamma$ and $\tilde\gamma$ have equal height and length. Hence, by Corollary \ref{cor:lengthandheight}, 

\begin{prop}\label{prop:unfolding}
Let $\gamma\colon[a,b]\to\R^3$ be a rectifiable curve with $|\gamma|\neq 0$, and $\tilde\gamma$ be the unfolding of $\gamma$. Then $E(\gamma)=E(\tilde\gamma)$.
\end{prop}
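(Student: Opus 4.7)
The plan is to invoke Corollary~\ref{cor:lengthandheight} directly: the corollary reduces $E(\gamma)=E(\tilde\gamma)$ to checking that $\gamma$ and $\tilde\gamma$ (viewed in $\R^3$ via the inclusion $\R^2\hookrightarrow\R^3$) are constant speed curves with identical lengths and identical pointwise heights. Since both the horizon $H$ and the length $L$ are invariant under monotone reparameterization, so is $E$, and therefore I would first reparameterize $\gamma$ with constant speed using Lemma~\ref{lem:basic}. Under this reparameterization $\tilde\gamma$ remains the cone unfolding of $\gamma$, so it suffices to verify two items: pointwise height agreement and equality of speeds.

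The height agreement is immediate from the polar form $\tilde\gamma(t)=|\gamma(t)|e^{i\theta_\gamma(t)}$, which gives $|\tilde\gamma(t)|=|\gamma(t)|$ for every $t\in[a,b]$. The speed equality is precisely the computation already assembled in the paragraph preceding the proposition: the decomposition in \eqref{eq:theta-prime} yields $|\tilde\gamma'|^2=(|\gamma|')^2+|\gamma|^2(\theta_\gamma')^2$, and substituting the a.e.\ identity $|\gamma|'=\langle\gamma,\gamma'\rangle/|\gamma|$ together with the displayed expression for $\theta_\gamma'$ produces an algebraic cancellation that collapses to $|\tilde\gamma'|=|\gamma'|$ almost everywhere. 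By Lemma~\ref{lem:basic}, $\tilde\gamma$ inherits constant speed from $\gamma$, so $L(\tilde\gamma)=L(\gamma)$.

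With constant speed, equal length, and pointwise equal height (which, in the relevant regime for the efficiency formula of Proposition~\ref{prop:H}, satisfies $|\gamma|\geq 1$) all confirmed, Corollary~\ref{cor:lengthandheight} gives $E(\gamma)=E(\tilde\gamma)$. The only preliminary subtlety I would need to address is that $\theta_\gamma$ is well defined, i.e., that $\ol\gamma=\gamma/|\gamma|$ is rectifiable; this follows because $|\ol\gamma'|\leq |\gamma'|/|\gamma|$ is essentially bounded whenever $|\gamma|$ is bounded away from $0$, which is guaranteed by the hypothesis together with continuity of $|\gamma|$ on the compact interval $[a,b]$. There is no genuine obstacle here: the full analytic content of the proposition is the Pythagorean-type identity $|\tilde\gamma'|=|\gamma'|$, which is exactly the statement that the unfolding preserves arclength, after which Corollary~\ref{cor:lengthandheight} does the rest.
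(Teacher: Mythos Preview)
Your proposal is correct and follows essentially the same route as the paper: verify $|\tilde\gamma|=|\gamma|$ from the polar form, use the computation \eqref{eq:theta-prime} to get $|\tilde\gamma'|=|\gamma'|$ a.e.\ (hence equal length and constant speed), and then apply Corollary~\ref{cor:lengthandheight}. The paper's argument is exactly this, and your added remark on the rectifiability of $\ol\gamma$ is a harmless clarification.
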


Next we develop some geometric properties of  $\tilde\gamma$. 

\begin{lem}\label{lem:1-1}
Let $\gamma\colon[a,b]\to\R^3$ be a minimal inspection curve with constant speed. Then $\tilde\gamma$ is locally one to one.
\end{lem}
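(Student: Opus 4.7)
The plan is to reduce local injectivity of $\tilde\gamma$ to strict monotonicity of the cone-angle function $\theta_\gamma$, and to derive the latter from the tangent-line bound supplied by Lemma \ref{lem:alpha}.

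Writing $\tilde\gamma(t)=|\gamma(t)|e^{i\theta_\gamma(t)}$, and recalling that $|\gamma|\geq 1$ since $\gamma$ is an inspection curve, the identity $\tilde\gamma(t_1)=\tilde\gamma(t_2)$ is equivalent to $|\gamma(t_1)|=|\gamma(t_2)|$ together with $\theta_\gamma(t_1)\equiv\theta_\gamma(t_2)\pmod{2\pi}$. By continuity of $\theta_\gamma$, every $t_0\in[a,b]$ admits a neighborhood $U$ on which the oscillation of $\theta_\gamma$ is less than $2\pi$, and on such a $U$ the congruence above collapses to the genuine equality $\theta_\gamma(t_1)=\theta_\gamma(t_2)$. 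So it will suffice to prove that $\theta_\gamma$ is strictly increasing on $[a,b]$.

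For the monotonicity, I would compute $\theta_\gamma'$ from \eqref{eq:theta-prime} using $\langle\gamma,\gamma'\rangle=|\gamma||\gamma'|\cos\alpha$, which yields
\[
\theta_\gamma' \;=\; \frac{|\gamma'|\sin\alpha}{|\gamma|}
\]
almost everywhere. By Lemma \ref{lem:alpha} the tangent lines of $\gamma$ avoid $\inte(B^3)$, and a tangent line passes through $o$ precisely when $\sin\alpha=0$; hence $\sin\alpha>0$ almost everywhere (in fact $\sin\alpha\geq 1/|\gamma|$). The constant-speed hypothesis and Lemma \ref{lem:basic} give $|\gamma'|=L(\gamma)/(b-a)$ almost everywhere, which is strictly positive since $L(\gamma)>0$. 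Therefore $\theta_\gamma'>0$ almost everywhere. Because $\ol\gamma$ is Lipschitz, $\theta_\gamma$ is absolutely continuous, so for any $t_1<t_2$ in $[a,b]$,
\[
\theta_\gamma(t_2)-\theta_\gamma(t_1)=\int_{t_1}^{t_2}\theta_\gamma'(s)\,ds>0,
\]
which proves strict monotonicity. Combined with the preceding paragraph, this yields local injectivity of $\tilde\gamma$.

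The only substantive input is Lemma \ref{lem:alpha}, which is where minimality of $\gamma$ genuinely enters: without it one could have an inspection curve with a radial subarc, along which $\theta_\gamma$ would be constant and $\tilde\gamma$ would retrace itself. I therefore expect no further obstacle beyond invoking that lemma.
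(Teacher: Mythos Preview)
Your proof is correct and follows essentially the same route as the paper: both reduce local injectivity of $\tilde\gamma$ to showing $\theta_\gamma'>0$ almost everywhere, and both derive this from the constant-speed hypothesis together with Lemma~\ref{lem:alpha} (which rules out $\gamma$ and $\gamma'$ being parallel). Your version is somewhat more explicit in justifying the reduction (via the oscillation argument and absolute continuity of $\theta_\gamma$) and in computing $\theta_\gamma'=|\gamma'|\sin\alpha/|\gamma|$, but the substance is the same.
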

\begin{proof}
It suffices to show that $\theta'_\gamma>0$ almost everywhere. The formula for $\theta'_\gamma$ in \eqref{eq:theta-prime}, via the Cauchy-Schwartz inequality, shows that $\theta'_\gamma\geq 0$, and $\theta'_\gamma=0$ only when $\gamma'$ vanishes, or else $\gamma$ and $\gamma'$ are parallel. But $\gamma'$ can vanish only on a set of measure zero, since $\gamma$ has constant speed. Furthermore  if $\gamma$ and $\gamma'$ are parallel, then  $\alpha=0$. But by Lemma \ref{lem:alpha}, $\alpha\neq 0$ almost everywhere, which completes the proof.
\end{proof}

A \emph{convex body} $K\subset\R^2$ is a compact convex set with interior points.
A planar curve $\gamma\colon[a,b]\to\R^2$ is \emph{locally convex} if it is locally one-to-one and each  $t\in [a,b]$ has an open neighborhood $U\subset[a,b]$ such that $\gamma(U)$ lies on the boundary of a convex body. A \emph{local supporting line} $\ell$ for $\gamma$ at $t$ is a line passing through $\gamma(t)$ with respect to which  $\gamma(U)$ lies on one side. If $\ell$ does not pass through $o$ and $\gamma(U)$ lies on the side of $\ell$ which contains $o$, then  $\ell$ lies \emph{above} $\gamma$. 
If $\gamma$ is locally convex and through each point of it there passes a local support line which lies above $\gamma$, then  
$\gamma$ is locally convex \emph{with respect to $o$}.

\begin{prop}\label{prop:loc-convex}
Let $\gamma\colon[a,b]\to\R^3$ be a minimal inspection curve with constant speed. Then $\tilde\gamma$ is locally convex with respect to $o$.
\end{prop}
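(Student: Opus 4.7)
The plan is to argue by contradiction: assume that $\tilde\gamma$ fails to be locally convex with respect to $o$ at some $t_0\in[a,b]$, and construct from this a closed inspection curve $\gamma^*\subset\R^3$ strictly shorter than $\gamma$, contradicting Proposition~\ref{prop:minimizer}.

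Before carrying out the shortening, I would verify that at every differentiable point $t_0$ of $\gamma$ with $\gamma'(t_0)\neq 0$ one has $\angle(\tilde\gamma(t_0),\tilde\gamma'(t_0))=\alpha(t_0)$: indeed, the identities $|\tilde\gamma|=|\gamma|$, $|\tilde\gamma'|=|\gamma'|$, and $|\tilde\gamma|'=|\gamma|'=\langle\gamma,\gamma'\rangle/|\gamma|$ derived in Section~\ref{sec:unfolding} give $\cos\angle(\tilde\gamma,\tilde\gamma')=\cos\alpha$. Combined with Lemma~\ref{lem:alpha}, this forces every tangent line of $\tilde\gamma$ to lie at distance at least $1$ from $o$; in particular no tangent line of $\tilde\gamma$ passes through $o$, so the notion of a local support line lying ``above'' $\tilde\gamma$ is well-defined.

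The heart of the argument is a local cone-unfolding shortcut. When $\tilde\gamma$ is locally convex at $t_0$ but no local support line lies above it, $\tilde\gamma$ bulges \emph{toward} $o$ near $t_0$; hence for any $s_1<t_0<s_2$ sufficiently close to $t_0$, the arc $\tilde\gamma|_{[s_1,s_2]}$ lies inside the triangle with vertices $o,\tilde\gamma(s_1),\tilde\gamma(s_2)$, and the chord $[\tilde\gamma(s_1),\tilde\gamma(s_2)]\subset\R^2$ is strictly farther from $o$ than the arc on each ray from $o$ through the arc. Because the arc lies outside $\inte(B^2)$, so does the chord; and the chord is strictly shorter than the arc by the triangle inequality. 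Refolding the chord onto the cone $C:=\{r\ol\gamma(s):r\geq 0,\,s\in[s_1,s_2]\}$ via the inverse of the unfolding yields a curve $\sigma\subset C$ from $\gamma(s_1)$ to $\gamma(s_2)$ of the same length as the chord, lying outside $B^3$ since the unfolding preserves distance to $o$. Replacing $\gamma|_{[s_1,s_2]}$ by $\sigma$ produces a shorter closed curve $\gamma^*$ lying outside $B^3$. The case in which $\tilde\gamma$ fails to be locally convex at $t_0$ can be reduced to this one by restricting to a subinterval on which the arc genuinely dips toward $o$ past its own chord.

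The main obstacle is to verify that $\gamma^*$ still inspects $\S^2$. On the cone $C$, each point $\gamma(s)$ of the original arc lies on the same ray from $o$ as a corresponding point $\sigma(s')$ of the refolded chord, with $|\gamma(s)|\leq|\sigma(s')|$; writing $\gamma(s)=(|\gamma(s)|/|\sigma(s')|)\sigma(s')+(1-|\gamma(s)|/|\sigma(s')|)o$ shows $\gamma|_{[s_1,s_2]}\subset\conv(\sigma\cup\{o\})$. On the other hand, since $\S^2\subset\conv(\gamma)$, the origin lies in the interior of $\conv(\gamma)$, so by Carath\'eodory's theorem four points of $\gamma$ have $o$ in the interior of their convex hull; choosing $[s_1,s_2]$ short enough that none of these points lies in $\gamma|_{[s_1,s_2]}$ gives $o\in\conv(\gamma^*)$. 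Combining these observations yields $\gamma|_{[s_1,s_2]}\subset\conv(\gamma^*)$, so $\conv(\gamma)\subset\conv(\gamma^*)$ and $\S^2\subset\conv(\gamma^*)$, contradicting the minimality of $\gamma$.
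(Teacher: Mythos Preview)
Your approach is the same as the paper's: replace a piece of the planar unfolding $\tilde\gamma$ by a chord that lies farther from $o$, then ``refold'' along rays to produce a shorter inspection curve in $\R^3$.  The refolding you describe is exactly the paper's map $\beta(t):=\frac{|\tilde\beta(t)|}{|\tilde\gamma(t)|}\gamma(t)$, and your verification that $\gamma\subset\conv(\gamma^*\cup\{o\})$ matches the paper's.

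There is, however, a genuine gap.  Your sentence ``The case in which $\tilde\gamma$ fails to be locally convex at $t_0$ can be reduced to this one by restricting to a subinterval on which the arc genuinely dips toward $o$ past its own chord'' is precisely the nontrivial content of the proposition, and you have not justified it.  For a Lipschitz curve the failure of local convexity can be quite irregular; it is not obvious that one can always locate a subarc lying entirely on the $o$-side of its own chord.  The paper avoids the case split altogether: it takes a small star-shaped neighborhood $\tilde\gamma(U)$ (using Lemma~\ref{lem:1-1}, which you do not invoke and which is needed even for your case~(a) claim that the arc sits in the triangle $o,\tilde\gamma(s_1),\tilde\gamma(s_2)$), forms the ``pie-slice'' region $K$ bounded by $\tilde\gamma(\overline U)$ and the two radii to $o$, and proves directly that $K$ is convex.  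The argument that non-convexity of $K$ produces a chord $\sigma$ with interior outside $K$ (hence with the arc between $\sigma$ and $o$) is exactly the existence statement you are asserting, and it takes some care: one must rule out that the offending segment exits through a radial side, which is where the assumption $\theta\leq\pi$ enters.

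A smaller point: your Carath\'eodory maneuver to obtain $o\in\conv(\gamma^*)$ forces $[s_1,s_2]$ to avoid four specific points of $\gamma$, which interacts awkwardly with the (as yet unestablished) choice of subinterval in case~(b).  The paper's argument is cleaner and imposes no size constraint: since $\beta(t)$ is a positive multiple of $\gamma(t)$, any half-space containing $\beta$ contains $\gamma$, so $o\in\conv(\gamma)$ forces $o\in\conv(\beta)$ directly.
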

\begin{proof}
By Lemma \ref{lem:1-1}, every $t\in[a,b]$ has a neighborhood $U\subset[a,b]$ such that $\tilde\gamma$ is one-to-one on 
$U$. Assuming that $U$ is small, $\tilde\gamma(U)$ will be star-shaped with respect to $o$,
i.e., for every $s\in U$ the line passing through $o$ and $\tilde\gamma(s)$ intersects $\tilde\gamma(U)$ only at $\tilde\gamma(s)$. Thus connecting the end points of $\tilde\gamma(\ol U)$ to $o$ yields a simple closed curve, say $\Gamma$. We call the segments which run between $o$ and end points of $\tilde\gamma(\ol U)$ the sides of $\Gamma$, and let $\theta$ denote  the interior angle  of $\Gamma$ at $o$. We may assume that $U$ is so small that $\theta\leq\pi$. Then we claim that the region $K$ bounded by $\Gamma$ is convex, which will complete the proof.  
To this end let $p_0$, $p_1\in \inte (K)$. There exists a curve $p\colon [0,1]\to \inte(K)$ with $p(0)=p_0$, and $p_1=p(1)$, since $\inte(K)$ is path connected by Jordan curve theorem. Let $\ol t\in [0,1]$ be the supremum of all points $t\in[0,1]$ such that the line segment $p(0)p(t)\subset\inte(K)$. If $\ol t=1$, for all pairs of points $p_0$, $p_1\in \inte(K)$, then the line segment $p(0)p(1)\subset\inte(K)$. So $\inte(K)$ is convex, which implies that $K$ is convex, and we are done. 

Suppose  that $\inte(K)$ is not convex. Then $\ol t<1$ for some pair of points $p_0$, $p_1\in\inte(K)$. Note also that $\ol t>0$ since $p_0\in\inte(K)$. So an interior point $x$ of  $p(0)p(\ol t)$ intersects $\partial K=\Gamma$, while $p(0)p(\ol t)\subset K$.  Since $\theta\leq\pi$, $x$ cannot lie on a side of $\Gamma$, for then either $p(0)$ or $p(1)$ will be forced to lie on a side of $\Gamma$ as well, which is not possible as they are interior points of $K$. So $x$ must lie on $\tilde\gamma(U)$. Now we may slightly perturb the segment $p(0)p(\ol t)$ so that a point of it leaves $K$ while its end points remain in $\inte(K)$. Then we obtain a line segment $\sigma$ whose end points lie on $\tilde\gamma(U)$ while its interior lies outside $K$. Thus if we replace the segment of $\tilde \gamma(U)$ which lies between the end points of $\sigma$ with the line segment $\sigma$, we obtain
 a star-shaped curve $\tilde\beta$ with $L(\tilde\beta)<L(\tilde\gamma)$. 
Parameterize $\tilde\beta$ by letting $\tilde\beta(t)$ be  the point where the ray generated by $\tilde\gamma(t)$ intersects $\tilde\beta$. Then $|\tilde\beta(t)|\geq|\tilde\gamma(t)|$.
  Now set 
$
\beta(t):=\frac{|\tilde \beta(t)|}{|\tilde\gamma(t)|}\gamma(t).
$
 Then $\tilde\beta$ is the unfolding of $\beta$. So
$
L(\beta)=L(\tilde\beta)<L(\tilde\gamma)=L(\gamma).
$
 On the other hand, $o\in\conv(\beta)$; otherwise there exists $u\in\S^2$ such that $\langle\beta(t),u\rangle>0$ for all $t\in[a,b]$, which in turn yields that
$
\langle\gamma(t),u\rangle>0,
$
 which is not possible since $o\in\conv(\gamma)$. So $\lambda\beta(t)\in \conv(\beta)$ for all $0\leq\lambda\leq 1$. In particular $\gamma\subset\conv(\beta)$. It follows that 
 $
\conv(\beta)\supset \conv(\gamma)\supset \S^2.
 $
  Thus  $\beta$ is an inspection curve  shorter than $\gamma$, which is the desired contradiction.
\end{proof}

\section{Spiral Decomposition of the Unfolding}\label{sec:decomposition}
Using the local convexity property established in the last section, we will show here that  the unfolding of a minimal inspection curve admits
a partition into certain segments  we call spirals. Note that a locally convex curve  is rectifiable, and therefore may be reparameterized with constant speed $C$. Then $|\gamma'|=C$ at almost all differentiable points of $\gamma$ by Lemma \ref{lem:basic}. Furthermore, local convexity also ensures \cite[Lem. 5.1]{ghomi-wenk-arXiv2020}:

\begin{lem}\label{lem:gamma-speed}
Let $\gamma\colon[a,b]\to\R^2$ be a locally convex curve with constant speed $C$. Then one sided derivatives of $\gamma$ exist at all points. Furthermore, $|\gamma'_+(a)|=|\gamma'_-(b)|=|\gamma'_\pm(t)|=C$ for all $t\in(a,b)$.
\end{lem}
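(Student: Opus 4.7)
The plan is to localize the claim and reduce it to classical facts about planar convex arcs. Fix $t_0\in[a,b]$; I describe the right-derivative argument, the left case being symmetric (at the endpoint $a$ only the right derivative is considered, and at $b$ only the left). By local convexity there is a neighborhood $U\subset[a,b]$ of $t_0$ such that $\gamma(U)$ lies on $\partial K$ for some convex body $K\subset\R^2$. Since by definition a locally convex curve is locally one-to-one, shrinking $U$ further we may assume $\gamma|_U$ is a homeomorphism onto a convex subarc $\Gamma$ of $\partial K$. From here the question concerns only the geometry of a short convex arc through $\gamma(t_0)$ parametrized at constant speed $C$.

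Next I would invoke the classical fact that on a convex arc the chord direction from a fixed endpoint rotates monotonically as the other endpoint moves along the arc, so the unit vector
\[
u_+:=\lim_{h\to 0^+}\frac{\gamma(t_0+h)-\gamma(t_0)}{|\gamma(t_0+h)-\gamma(t_0)|}
\]
exists (this is exactly the one-sided tangent to $\partial K$ at $\gamma(t_0)$). The same monotonicity, applied at the moving endpoint, shows that the tangent direction at $\gamma(t_0+h)$ also converges to $u_+$, so for small $h$ the arc $\gamma([t_0,t_0+h])$ is enclosed in a wedge whose opening angle $\delta(h)$ tends to $0$ as $h\to 0^+$.

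For the magnitude, constant speed gives
$|\gamma(t_0+h)-\gamma(t_0)|\leq L\big(\gamma|_{[t_0,t_0+h]}\big)=Ch,$
so $\limsup_{h\to 0^+}|\gamma(t_0+h)-\gamma(t_0)|/h\leq C$. For the matching lower bound I would use the standard estimate for short convex arcs: such an arc is contained in the triangle formed by the chord and the two tangent rays at its endpoints, whose perimeter is bounded by $|\gamma(t_0+h)-\gamma(t_0)|/\cos(\delta(h))$. Hence
\[
Ch=L\big(\gamma|_{[t_0,t_0+h]}\big)\leq \frac{|\gamma(t_0+h)-\gamma(t_0)|}{\cos(\delta(h))},
\]
and since $\delta(h)\to 0$ we get $|\gamma(t_0+h)-\gamma(t_0)|/h\to C$. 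Combining magnitude and direction yields $\gamma'_+(t_0)=C u_+$, in particular $|\gamma'_+(t_0)|=C$, completing the claim.

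The main technical point is the chord-to-arc estimate on a shrinking convex arc; once the one-sided tangent direction $u_+$ is in hand this reduces to elementary planar trigonometry, so I do not anticipate a serious obstacle beyond the bookkeeping of angles, and the argument applies uniformly at interior points and at the boundary points $a,b$ in the expected one-sided fashion.
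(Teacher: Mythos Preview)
The paper does not actually prove this lemma; it simply records the statement and cites the longer arXiv version \cite[Lem.~5.1]{ghomi-wenk-arXiv2020} for the argument. So there is no in-paper proof to compare against.

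Your approach is correct and is essentially the classical one for convex arcs: existence of one-sided tangents comes from monotonicity of secant directions along $\partial K$, and the speed computation follows from sandwiching the arc between the chord and the two tangent sides of the enclosing triangle. One small wording issue: when you write ``whose perimeter is bounded by $|\gamma(t_0+h)-\gamma(t_0)|/\cos(\delta(h))$'', you cannot mean the perimeter of the triangle (that always exceeds twice the chord). What you need, and what your computation actually uses, is that the \emph{sum of the two non-chord sides} $|AP|+|PB|$ is at most $|AB|/\cos(\delta(h))$; this follows from $\sin(\alpha+\beta)\geq\cos\delta\,(\sin\alpha+\sin\beta)$ for $\alpha,\beta\le\delta$, and the arc length is bounded by $|AP|+|PB|$ by the standard convex-inside-convex perimeter comparison. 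Also, since the argument invokes tangent lines at $\gamma(t_0+h)$ while you are still establishing their existence, it is cleaner to first run the monotone-secant argument at \emph{every} point of the convex arc (giving all one-sided tangents at once), or simply to use support lines of $K$ in place of tangent lines for the triangle bound. With those cosmetic fixes the argument is complete.
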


Let $\gamma\colon[a,b]\to\R^2\setminus\{o\}$ be a  locally convex curve with constant speed.  If $t\in(a,b)$ is a differentiable point of $\gamma$ then $\gamma'_+(t)=\gamma_-'(t)=\gamma'(t)$. Thus the above lemma shows that  $|\gamma'(t)|\neq 0$ at differentiable points of $\gamma$, since $C=L(\gamma)/(b-a)>0$. In particular the angle $\alpha$ is well defined at differentiable points of $\gamma$. We count $a$, $b$ among differentiable points of $\gamma$, and set
$\gamma'(a):=\gamma'_+(a)$, $\gamma'(b):=\gamma'_-(b)$.  We say that $\gamma$ is a \emph{spiral} if (i) $\gamma$ is locally convex with respect to $o$, (ii) $|\gamma|$ is nondecreasing, (iii) $\alpha(a)=\pi/2$, and (iv) $|\gamma(a)|\geq 1$. Note that condition (ii), via \eqref{eq:alpha-gamma-prime0}, implies that 
\be\label{eq:alpha-leq-pi/2}
\alpha(t)\leq \pi/2
\ee
at differentiable points $t\in[a,b]$ of $\gamma$. 
We say that $\gamma$ is a \emph{strict} spiral if $|\gamma|$ is increasing. By a \emph{spiral decomposition} of a constant speed curve $\gamma\colon[a,b]\to\R^2$ we mean a collection $U_i$ of mutually disjoint open subsets of $[a,b]$ such that (i)  $\gamma|_{\ol U_i}$ is a strict spiral, after switching the direction of $\gamma$ if necessary, and (ii) $|\gamma|'=0$ almost everywhere on $[a,b]\setminus \cup_i \ol U_i$. By a \emph{parameter shift}  we mean  replacing $t$ with $(t+x)\,\text{mod}\, (b-a)$ for some $x\in[a,b]$. The main result of this section is:

\begin{prop}\label{prop:decomposition}
Let $\gamma\colon[a,b]\to\R^3$ be a  minimal  inspection curve.  Then  the unfolding of $\gamma$  admits a spiral decomposition, after a parameter shift.
\end{prop}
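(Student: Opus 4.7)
The plan is to analyze the Lipschitz height function $h := |\tilde\gamma|$ on the unfolding and use local convexity with respect to $o$ (Proposition~\ref{prop:loc-convex}) to show that $[a,b]$ decomposes into finitely many maximal open intervals on which $h$ is strictly monotone, with the complement a finite union of plateaus on which $h$ is constant. The spirals will be these monotone intervals, after reversing direction on the decreasing ones.

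The first key step I would carry out is to rule out strict local minima of $h$. At a smooth strict local minimum $t_0$, $\tilde\gamma$ touches the circle $C_{h(t_0)} = \{|z| = h(t_0)\}$ tangentially from the outside, so the unique supporting line at $\tilde\gamma(t_0)$ is the tangent to $C_{h(t_0)}$, which separates $\tilde\gamma(U)$ (outside the disk) from $o$ (inside the disk), contradicting local convexity with respect to $o$. At a corner strict local minimum, both one-sided tangent rays have positive radial component, so the entire tangent cone lies in the open half-plane opposite to $o$; a direct angle count then shows that no supporting line through $\tilde\gamma(t_0)$ can have both the tangent cone and $o$ on the same side.

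From this structural fact, monotonicity changes of $h$ occur only at smooth local maxima (where the tangent is automatically perpendicular to the radial direction) and at plateau boundaries. A compactness argument---each point has a neighborhood on which $\tilde\gamma$ has total turning less than $\pi$, so the distance from $o$ restricted to that arc has at most one interior critical point---combined with compactness of $[a,b]$ yields a finite decomposition. I would then cyclically shift the parameterization of $\gamma$ (permitted by closedness) so that $t = a$ coincides with a critical point of $h$, let $U_1, \ldots, U_n$ be the maximal strictly monotone open subintervals, and reverse direction of $\tilde\gamma$ on those $U_i$ where $h$ is strictly decreasing. The complement $[a,b] \setminus \bigcup_i \overline{U_i}$ is then a union of plateaus, so $|\gamma|' = 0$ there almost everywhere, verifying condition (ii). Local convexity, strict monotonicity of $|\tilde\gamma|$, and $|\tilde\gamma| \geq 1$ carry over to each $\overline{U_i}$, and the remaining spiral condition $\alpha(\inf U_i) = \pi/2$ follows by applying the same corner-exclusion argument at the plateau-to-monotone transition $\inf U_i$: no corner is permitted there, so $\tilde\gamma$ is smooth at $\inf U_i$ with its tangent perpendicular to the radial direction.

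The main obstacle is the corner analysis: both the exclusion of strict local minima and the smoothness at plateau-to-monotone transitions reduce to showing that the range of supporting line directions admitted at a corner by local convexity with respect to $o$ is incompatible with the angular position of $o$ relative to the tangent cone. This is a careful planar geometry computation requiring bookkeeping of the one-sided derivative directions, the corner angle, and the sign of the radial component of each tangent ray with respect to $o$.
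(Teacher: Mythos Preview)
Your first key step---ruling out strict local minima of $h$---fails, and the error is in the smooth case. You assert that at a smooth strict local minimum $t_0$ the tangent line to $C_{h(t_0)}$ ``separates $\tilde\gamma(U)$ (outside the disk) from $o$ (inside the disk).'' But lying outside the disk $\{|z|\le h(t_0)\}$ does not force $\tilde\gamma(U)$ onto the side of that tangent line opposite $o$: the exterior of a disk meets \emph{both} half-planes determined by any of its tangent lines. For a concrete counterexample, take an arc of a circle of radius $R>r:=h(t_0)$ centered at $(r-R,0)$, so internally tangent to $C_r$ at $(r,0)$; this arc lies in the half-plane $\{x\le r\}$ containing $o$, hence is locally convex with respect to $o$, yet its height has a strict local minimum at $(r,0)$. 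So strict local minima of $h$ are entirely compatible with Proposition~\ref{prop:loc-convex}, and the rest of your plan, which presupposes their absence, does not get off the ground.

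The paper does not exclude local minima but exploits them. One picks a local minimum $x$ of $|\tilde\gamma|$ and observes, by sandwiching $\tilde\gamma$ between the circle $C_{|\tilde\gamma(x)|}$ from below and a support line lying above, that $\tilde\gamma$ is differentiable at $x$ with $\alpha(x)=\pi/2$; then one shifts the parameter so that $a=x$. The decomposition is then obtained not from the monotone intervals of $h$ directly but from the closed set $X$ of parameters admitting a local support line orthogonal to the position vector: on each component $U$ of $[a,b]\setminus X$ one shows, via continuity of support-line directions, that $|\gamma|'$ has a fixed sign, so $\gamma|_{\ol U}$ is a strict spiral after possibly reversing direction. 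Your corner analysis is essentially this sandwiching argument and is sound; it is the smooth-case conclusion that you have backwards. Note also that the paper's decomposition may be countably infinite---your finiteness claim is neither needed nor clearly established.
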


We may assume that $\gamma$ has constant speed.
Let $\tilde\gamma$ be the unfolding of $\gamma$ and $x\in (a,b)$ be a local minimum point of the height function $|\gamma|=|\tilde\gamma|$. Then $\tilde\gamma$ is locally supported from below by a circle of radius $|\tilde\gamma(x)|$ centered at $o$. Thus, since $\tilde\gamma$ is locally convex with respect to $o$, there can pass only one local support line of $\tilde\gamma$ through $\tilde\gamma(x)$. Consequently $\tilde\gamma$ is differentiable at $x$ \cite[Thm. 1.5.15]{schneider2014}. Furthermore,  the local support line at $\tilde\gamma(x)$ must be orthogonal to $\tilde\gamma(x)$, since $x$ is a local minimum of $|\tilde\gamma|$. So $\langle \tilde\gamma'(x),\tilde\gamma(x)\rangle =0$. Now if we shift the parameter of $\tilde\gamma$ by $x$, it follows that $\tilde\gamma$ is orthogonal to $\tilde\gamma(a)$ and $\tilde\gamma(b)$. Thus to prove Proposition \ref{prop:decomposition} it suffices to show:
\begin{lem}\label{lem:decomposition}
Let $\gamma\colon[a,b]\to\R^2$ be a constant speed curve which is locally convex with respect to $o$. Suppose that $\alpha(a)=\pi/2=\alpha(b)$, and $|\gamma|\geq 1$. Then $\gamma$ admits a spiral decomposition.
\end{lem}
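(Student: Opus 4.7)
The plan is to take the collection $\{U_i\}$ to consist of the connected components of the relatively open set
\[
O := \{t \in [a,b] : h \text{ is strictly monotone on some relatively open neighborhood of } t\},
\]
where $h := |\gamma|$. A chaining argument shows that on each connected component $U_i$ the direction of monotonicity is constant, so $h$ is strictly monotone on $U_i$; by continuity this extends to $\overline{U_i}$. After reversing the direction of $\gamma|_{\overline{U_i}}$ on those $U_i$ where $h$ is decreasing, $|\gamma|$ is strictly increasing on $\overline{U_i}$. Conditions (i) and (iv) of being a strict spiral are immediate from the hypotheses, so the main content is verifying (iii), that $\alpha = \pi/2$ at the initial endpoint $c$ of $\gamma|_{\overline{U_i}}$. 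For $c = a$ or $c = b$ this is given; for interior $c$, I will show that $c$ is a two-sided local minimum of $h$, after which the argument sketched just before the lemma---the unique local support line of $\gamma$ at a local minimum of $h$ must coincide with the tangent line to the circle of radius $h(c)$ centered at $o$---yields that $\gamma$ is differentiable at $c$ with $\alpha(c) = \pi/2$.

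The key sub-lemma underpinning everything is: \emph{if $t_0 \in [a,b]$ is a differentiable point of $\gamma$ with $\alpha(t_0) \neq \pi/2$, then $h$ is strictly monotone on a neighborhood of $t_0$.} Its proof rests on the fact that for a locally convex curve the directions of the one-sided tangents $\gamma'_\pm(t)$ rotate monotonically in $t$, so differentiability at $t_0$ (equality of $\gamma'_+(t_0)$ and $\gamma'_-(t_0)$) amounts to no jump at $t_0$ and hence continuity of the tangent direction there. Consequently $\alpha_\pm$ is continuous at $t_0$ and stays away from $\pi/2$ on a neighborhood, so $h'_\pm = C\cos\alpha_\pm$ has constant nonzero sign there, whence strict monotonicity of the Lipschitz function $h$ follows by the fundamental theorem of calculus. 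I expect this to be the main obstacle, since upgrading pointwise differentiability at $t_0$ to neighborhood control of the tangent relies essentially on the convexity-induced monotone rotation of one-sided tangents.

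With the sub-lemma in hand, the remaining tasks fall into place. Condition (ii), that $|\gamma|' = 0$ almost everywhere on $K := [a,b] \setminus \bigcup_i \overline{U_i}$, follows because almost every $t \in K$ is a differentiable point of $\gamma$ and the sub-lemma forces $\alpha(t) = \pi/2$ (otherwise $t \in O$), giving $h'(t) = C\cos\alpha(t) = 0$. Integrating, $h$ is constant on each connected component of $K$. The two-sided local minimum property at an interior initial endpoint $c$ then follows: on the side of $c$ within $U_i$, $h$ strictly increases away from $h(c)$ by construction, while on the other side one of two situations occurs---either the component of $K$ containing $c$ has positive length, on which $h \equiv h(c)$; or an abutting component $U_j$ reaches $c$, in which case $h$ must be strictly decreasing on $\overline{U_j}$ down to $h(c)$, for otherwise $U_j$ and $U_i$ would combine across $c$ into a larger strictly monotone interval, contradicting maximality of $U_i$. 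In either case $h \geq h(c)$ on the corresponding neighborhood of $c$, completing the proof.
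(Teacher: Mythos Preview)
Your argument is correct and takes a genuinely different route from the paper, though there is a small notational slip: in the final paragraph you speak of ``the component of $K$ containing $c$,'' but $c\in\overline{U_i}$ so $c\notin K=[a,b]\setminus\bigcup_i\overline{U_i}$. What you want is the component of the closed set $[a,b]\setminus O$ containing $c$; since this differs from $K$ only by the countable set $\bigcup_i\partial U_i$, your conclusion that $h$ is constant on it still follows, and the dichotomy (either that component has positive length, or some $U_j$ abuts $c$) is then exhaustive.

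As for the comparison: the paper partitions $[a,b]$ by the set $X$ of points admitting a support line \emph{orthogonal} to the position vector, and works geometrically with limits of support lines---if $|\gamma|'$ changed sign on a component $U$ of $[a,b]\setminus X$, then at an accumulation point $r$ one would find support lines making angles both $\le\pi/2$ and $\ge\pi/2$ with $\gamma(r)$, forcing an orthogonal one and hence $r\in X$. Orthogonality at the initial endpoint $t_0$ then comes for free ($t_0\in X$), and a sandwich between the orthogonal support line and the circle of radius $|\gamma(t_0)|$ finishes. You instead partition by the set $O$ where $h=|\gamma|$ is locally strictly monotone, and your engine is the analytic sub-lemma that at a differentiable point with $\alpha\neq\pi/2$ the monotone rotation of one-sided tangents (standard for locally convex curves) forces $\alpha_\pm$ to be continuous there, hence bounded away from $\pi/2$ nearby, giving strict monotonicity of $h$ via the fundamental theorem of calculus. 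Your verification of $\alpha(c)=\pi/2$ goes through a two-sided local-minimum argument rather than direct membership in $X$. The paper's approach is more geometric and sidesteps the tangent-rotation machinery; yours is more analytic and arguably more direct once the sub-lemma is in hand, since the decomposition is defined by the very property (strict monotonicity of $h$) you need.
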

\noindent
To prove this lemma first recall that, as we had mentioned at the end of Section \ref{sec:integral}, the height function $|\gamma|$ of a constant speed curve is Lipschitz. In particular $|\gamma|$ is absolutely continuous and so it satisfies the fundamental theorem of calculus:
\be\label{eq:FTM}
 |\gamma(s)|-|\gamma(t)|=\int_t^s|\gamma|'(t)dt
 \ee
for every pair of points $s<t\in[a,b]$. Furthermore,  let us reiterate that by \eqref{eq:alpha-gamma-prime0} if the speed of $\gamma$ is equal to $C$ then
for almost every $t\in[a,b]$,
$\alpha(t)=\cos^{-1}(|\gamma|'(t)/C)$.

\begin{proof}[Proof of Lemma \ref{lem:decomposition}]
Let $X$ be the set of points $t\in[a,b]$ such that $\gamma$ has a local support line at $\gamma(t)$ which is orthogonal to $\gamma(t)$. Then it follows from \eqref{eq:alpha-gamma-prime0} that $|\gamma|'=0$ almost everywhere on $X$.
Also note that $X$ is closed, since the limit of any sequence of support lines of a convex body is a support line. Consequently each (connected) component $U$ of  $[a,b]\setminus X$ is an open subinterval of $[a,b]$. 
We claim that  $\gamma|_{\ol U}$ is a strict spiral, possibly after switching the direction of $\gamma|_{\ol U}$, which will complete the proof. 

To establish the above claim first note that by \eqref{eq:alpha-gamma-prime0}, $|\gamma|'$ cannot vanish at any differentiable point of $|\gamma|$ on $U$, for any such point would belong to $X$. We will show  that either $|\gamma|'>0$ almost everywhere on $U$ or else $|\gamma|'<0$ almost everywhere on $U$.  To  this end we start by orienting each local support line $\ell$ of $\gamma$ consistent with the orientation of $\gamma$ at the point of contact with $\ell$, so that the \emph{angle} between any support line $\ell$ and the position vector of its point of contact will be consistently  defined along $\gamma$. Now suppose, towards a contradiction, that there are subsets $X$ and $Y$ of $U$ with nonzero measure such that $|\gamma|'>0$ on $X$ and $|\gamma|'<0$ on $Y$. Since $X\cup Y$ is dense in $U$, there exists a point $r\in U$ which is a limit both of $X$ and $Y$. More specifically,
there are sequences of differentiable points $t_i$, $s_i$ converging to $r$ such that 
 $|\gamma|'(t_i)>0$ and $|\gamma|'(s_i)<0$, which in turn implies that $\alpha(t_i)<\pi/2$ and $\alpha(s_i)>\pi/2$ by \eqref{eq:alpha-gamma-prime0}. Hence, since the limit of support lines to a convex body  is a support line, there  exists  a local support line through $r$ which makes an angle $\leq\pi/2$ with $r$, and there also exists a support line through $r$ which makes an angle $\geq\pi/2$ with $r$. So we conclude that there  exists a local support line orthogonal to $r$, which is not possible by definition of $U$. Hence $|\gamma|'$ is always positive or always negative at differentiable points of $|\gamma|$ in $U$ as claimed. 

 Now it follows from \eqref{eq:FTM} that $|\gamma|$ is strictly monotone on $\ol U$. Next, let $t_0$ be the boundary point of $\ol U$ which forms the minimum point of $|\gamma|$ on $\ol U$. We have to show that $\gamma|_{\ol U}$ is orthogonal to $\gamma(t_0)$. If $t_0=a$, $b$ this already holds by assumption. So suppose that $t_0\in(a,b)$. Then $t_0\in X$, and so $\gamma$ has a local support line $\ell$ at $\gamma(t_0)$ which is orthogonal to $\gamma(t_0)$. Since $\gamma$ is locally convex with respect to $o$,  locally $\gamma$ lies below $\ell$. On the other hand, since $t_0$ is the minimum point of $|\gamma|$ on ${\ol U}$, then, near $\gamma(t_0)$, $\gamma|_{\ol U}$ lies above the circle $S$ with radius $|\gamma(t_0)|$ centered at $o$. Thus $\gamma|_{\ol U}$ must be orthogonal to $\gamma(t_0)$ as desired. We conclude then that $\gamma|_{\ol U}$ is a spiral, after switching the direction of $\gamma|_{\ol U}$ if necessary, so that $\gamma(t_0)$ becomes its initial point.
\end{proof}

\section{Efficiency of Line Segments}\label{sec:horizon}
Here we derive some formulas for the horizon and therefore efficiency of line segments, which may be checked using \cite{ghomi-wenk:Mathematica2}. Suppose that we have a  line segment $p_0p_1$ (with $p_0\neq p_1$) such that the line generated by $p_0p_1$ avoids  $\inte(B^3)$, see Figure \ref{fig:circles}. 
\begin{figure}[h]
\begin{overpic}[height=1.5in]{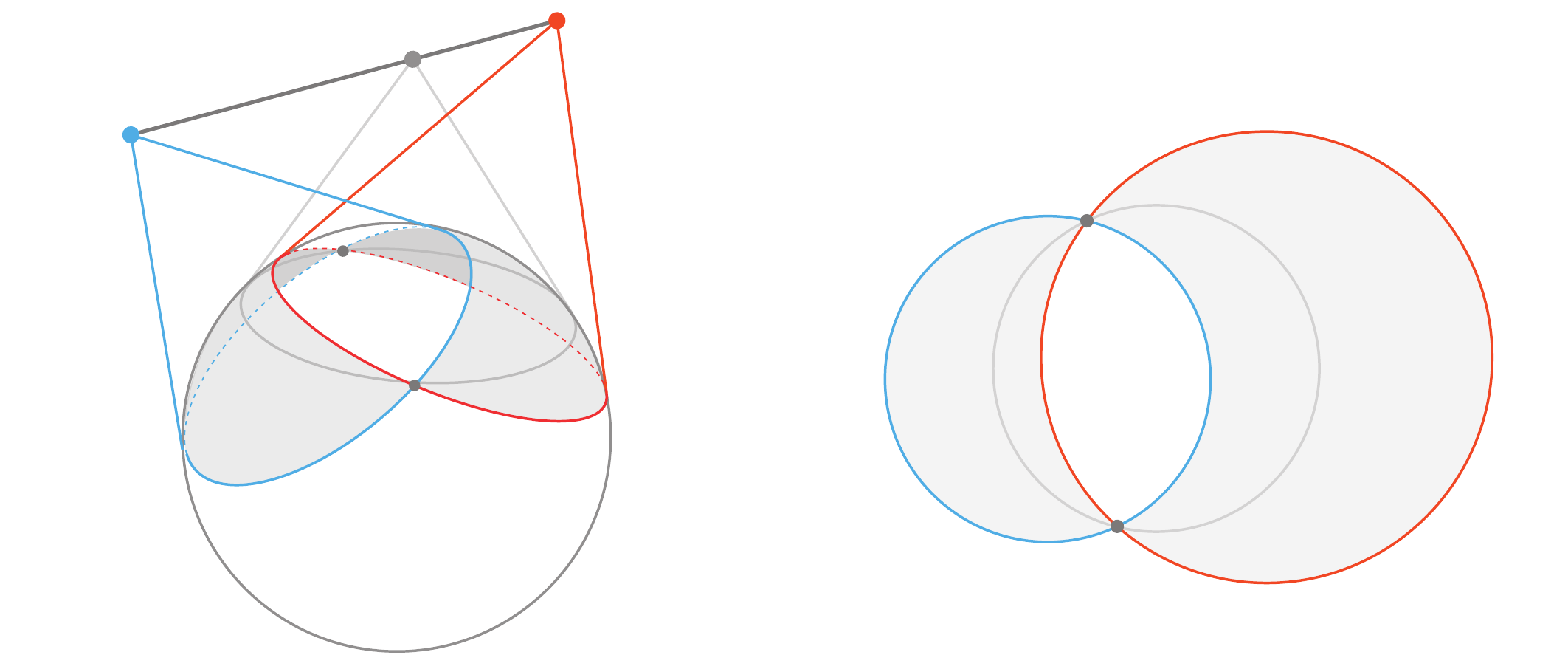}
\put(6,35.5){\small$p_0$}
\put(36,43){\small$p_1$}
\put(53,7){\small$H(p_0)$}
\put(91,7){\small$H(p_1)$}
\put(26,15){\small$q$}
\put(21,24){\small$q'$}
\put(70,6){\small$q$}
\put(68,30){\small$q'$}
\end{overpic}
\caption{}\label{fig:circles}
\end{figure}
For each point $p$ on $p_0p_1$, let $C_p$ be the (inspection) cone generated by all rays which emanate from $p$ and pass through a point of $B^3$. Let $H(p)$ be the set of points where $\partial C_p$ touches $\S^2$, i.e., the \emph{horizon circle} from the point of view of $p$. Then $H(p_0p_1)$ is the area of the union of all  horizon circles $H(p)$. Let $C_i:=C_{p_i}$, $H_i:=H(p_i)$, and  $\{q,q'\}:=H_0\cap H_1$; it is possible that $q=q'$ which happens precisely when the line through $p_0$ and $p_1$ is tangent to $\S^2$. Note that all horizon circles $H(p)$ pass through $q$ and $q'$, because the triangles $p_0p_1q$ and $p_0p_1q'$ lie on planes which are tangent to $\S^2$. Thus $H(p_0p_1)$ consists of the two lunar regions determined by $H_0$ and $H_1$, if $q\neq q'$; otherwise, $H(p_0p_1)$ is the region lying inside one of the circles and outside the other. More precisely, if $D_i$  denote the (inspection) disks in $\S^2$ bounded by $H_i$, which lie inside $C_i$, then
\be\label{eq:Hp0p1}
H(p_0p_1)=A(D_0)+ A(D_1)- 2 A(D_0\cap D_1),
\ee
where $A$ stands for area.
We may use basic spherical trigonometry to compute $H(p_0p_1)$ as follows. To start, note that if we set
$
h_i:=|p_i|, \;\text{and}\; \ell:=|p_0p_1|
$
then the radii of $D_i$ and the distance in $\S^2$ between the centers of $D_i$ are given respectively by
$
\rho_i:=\cos^{-1}(1/h_i), 
$
and
$
d:=\cos^{-1}((h_0^2+h_1^2-\ell^2)/(2h_0h_1)).
$
 It is a basic fact that
$
A(D_i)=4\pi\sin^2(\rho_i/2).
$
The formula for $A(D_0\cap D_1)$ is also known in terms of $\rho_i$ and $d$ \cite{tovchigrechko-vakser2001,ghomi-wenk-arXiv2020}.
Substituting these  formulas in  \eqref{eq:Hp0p1} yields the following formula for $H(p_0p_1)$:
\begin{small}
\begin{multline*}
H(h_0,h_1,\ell)
=
 4 \Big(\frac{1}{h_0}\sin ^{-1}\left(\frac{h_1^2-h_0^2-\ell^2}{\sqrt{\left(h_0^2-1\right)
   \left((h_0+h_1)^2-\ell^2\right) \left(\ell^2-(h_1-h_0)^2\right)}}\right)
   +\\
   \frac{1}{h_1}\sin
   ^{-1}\left(\frac{h_0^2-h_1^2-\ell^2}{\sqrt{\left(h_1^2-1\right)
   \left((h_0+h_1)^2-\ell^2\right) \left(\ell^2-(h_1-h_0)^2\right)}}\right)
   +\cos
   ^{-1}\left(\frac{h_0^2+h_1^2-\ell^2-2}{2 \sqrt{\left(h_0^2-1\right)
   \left(h_1^2-1\right)}}\right)\Big).
\end{multline*}
\end{small}
Note that
$
h_1=\sqrt{h_0^2+\ell^2+2 h_0\ell\cos(\alpha)},\;\text{where}\;  \alpha:=\angle(p_0,p_0p_1).
$
In particular, if $\alpha=\pi/2$ then we obtain a formula for the horizon of one-edge spirals:
\begin{multline}\label{eq:h0-ell}
 H(h_0,\ell)=
4 \left(\cos ^{-1}\left(\frac{\sqrt{h_0^2-1}}{\sqrt{h_0^2+\ell^2-1}}\right)
-\frac{1}{\sqrt{h_0^2+\ell^2}}\sin ^{-1}\left(\frac{\ell}{h_0
   \sqrt{h_0^2+\ell^2-1}}\right)\right).
\end{multline}
Finally, if we set $\ell=\sqrt{h_1^2-h_0^2}$ in the last expression we obtain another formula for the horizon of one-edge spirals in terms of $h_0$ and $h_1$:
\begin{equation*}\label{eq:h0-h1}
 H(h_0,h_1):=4 \left(\cos ^{-1}\left(\frac{\sqrt{h_0^2-1}}{\sqrt{h_1^2-1}}\right)-\frac{1}{h_1}\sin ^{-1}\left(\frac{\sqrt{h_1^2-h_0^2}}{h_0
   \sqrt{h_1^2-1}}\right)\right).
\end{equation*}
The graph of the corresponding efficiency function $E(h_0,h_1):=H(h_0,h_1)/\sqrt{h_1^2-h_0^2}$, for $h_1\geq h_0\geq 1$ is shown in Figure \ref{fig:line-eff}.
\begin{figure}[h]
\begin{overpic}[height=1.5in]{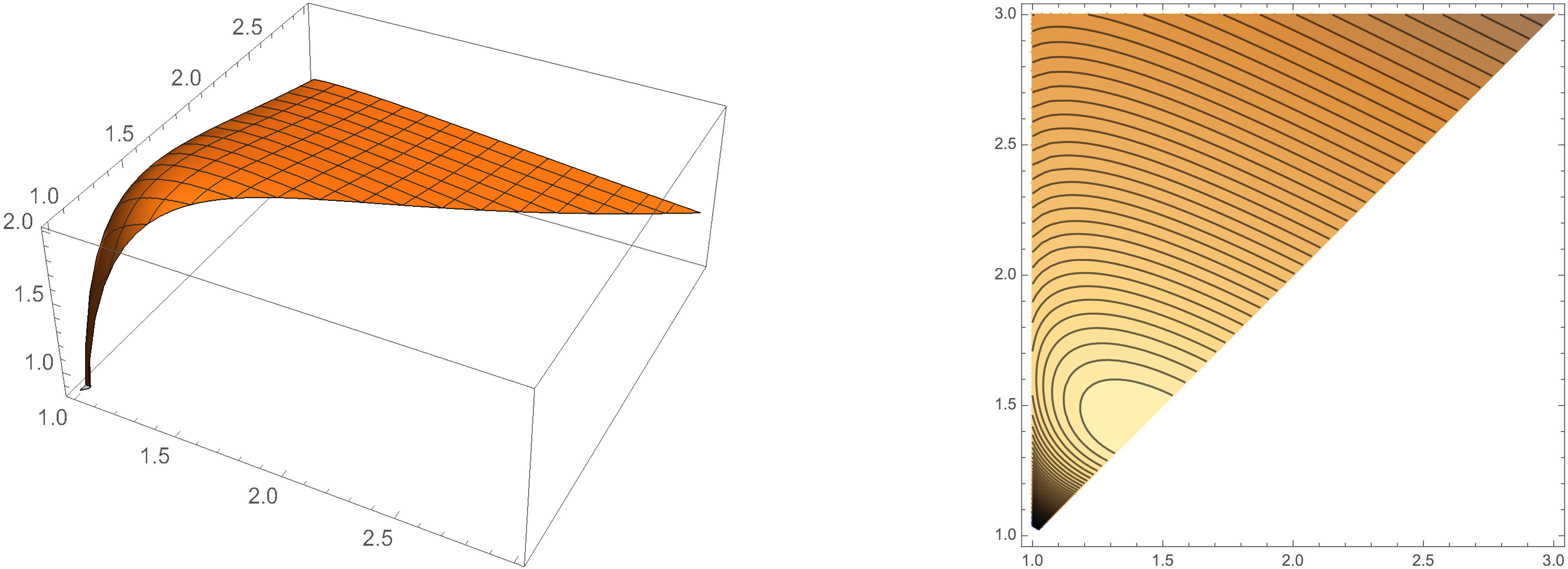}
\end{overpic}
\caption{}\label{fig:line-eff}
\end{figure}
Note that $E(h_0,h_1)\leq 2$, and equality holds only when $h_0=h_1=\sqrt 2$, or the spiral has constant height $\sqrt2$. Below we will prove that all spirals satisfy these properties.

\section{Upper Bound for Efficiency of Spirals}\label{sec:inequality}
Here we apply formulas of the last section to bound the efficiency of spirals  via a variational argument applied to polygonal curves. First we need to show that spirals form a locally compact space on which the efficiency functional is continuous. To this end we start by extending the definition of a spiral as follows. We say that $\gamma\colon[a,b]\to\R^2$ is a \emph{(generalized) spiral} provided that either $\gamma$ is a spiral as defined  in Section \ref{sec:decomposition}, or else $\gamma$ is a constant map with $|\gamma|\geq 1$. 
We also extend the definition of efficiency by setting
\begin{equation}\label{eq:LP0}
E(\gamma):=\frac{4\sqrt{|\gamma|^2-1}}{|\gamma|^2}, \quad\quad \text{when} \quad\quad L(\gamma)=0.
\end{equation}
So by Proposition \ref{prop:H}, when $L(\gamma)=0$, $E(\gamma)$ is the efficiency of a curve of constant height $|\gamma|$. Note  that then $E(\gamma)\leq 2$, and $E(\gamma)=2$ only when $|\gamma|=\sqrt2$. The space of spirals $\gamma\colon[a,b]\to\R^2$, with the topology induced on it by the uniform metric \eqref{eq:sup-norm}, will be denoted by $\mathcal{S}([a,b])$. To show that $E$ is continuous on $\mathcal{S}([a,b])$  first we observe that:

\begin{lem}\label{lem:alpha3}
Let $\gamma\colon[a,b]\to\R^2$ be a constant speed spiral with $L(\gamma)\neq 0$.  Then 
\be\label{eq:lem-alpha}
\alpha(t)\geq\sin^{-1}\big(|\gamma(a)|/|\gamma(t)| \big),
\ee
at all differentiable points $t\in[a,b]$ of $\gamma$.
\end{lem}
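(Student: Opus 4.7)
The inequality \eqref{eq:lem-alpha} is equivalent to $|\gamma(t)|\sin\alpha(t) \geq |\gamma(a)|$, which has a clean geometric interpretation: the tangent line to $\gamma$ at $t$ stays at distance at least $|\gamma(a)|$ from $o$. Since the spiral hypothesis gives $\alpha(a) = \pi/2$, we have $|\gamma(a)|\sin\alpha(a) = |\gamma(a)|$, so the plan is to show that the function $d(t) := |\gamma(t)|\sin\alpha(t)$ is non-decreasing on $[a,b]$; this immediately yields \eqref{eq:lem-alpha} at every differentiable $t$.

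To this end I would pass to polar coordinates $\gamma(t) = r(t)e^{i\theta(t)}$ with respect to $o$ and orient $\gamma$ so that $\theta$ is non-decreasing (as in the proof of Lemma \ref{lem:1-1}). The constant-speed hypothesis then yields $r' = C\cos\alpha$ and $r\theta' = C\sin\alpha$ almost everywhere, where $C = L(\gamma)/(b-a)$, and the angle of the tangent direction is $\phi := \theta + \alpha$. A direct computation gives
\[
d'(t) = r'\sin\alpha + r\cos\alpha\cdot\alpha' = r\cos\alpha\cdot(\theta'+\alpha') = r\cos\alpha\cdot\phi'
\]
almost everywhere. Since $r\geq 1$ and $\cos\alpha\geq 0$ by \eqref{eq:alpha-leq-pi/2}, the sign of $d'$ coincides with that of $\phi'$. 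Because $\gamma$ is locally convex with respect to $o$, the tangent direction of a locally convex planar curve rotates monotonically; with our orientation this forces $\phi$ to be non-decreasing, so $d' \geq 0$ almost everywhere.

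The main technical obstacle is handling possible corners of $\gamma$, where $\alpha$ and $\phi$ may jump. Lemma \ref{lem:gamma-speed} provides one-sided tangents at every point, so one-sided values $\alpha_\pm$ and $d_\pm$ are well defined. At a convex corner, $\phi$ jumps upward by the definition of local convexity, and since $\alpha_\pm \leq \pi/2$ by \eqref{eq:alpha-leq-pi/2}, $\sin\alpha$ (and hence $d$) jumps upward as well. Combined with $d' \geq 0$ a.e.\ on the smooth portions, this shows $d$ is non-decreasing on all of $[a,b]$. Hence $d(t) \geq d(a) = |\gamma(a)|$ at every differentiable $t$, which rearranges to \eqref{eq:lem-alpha}.
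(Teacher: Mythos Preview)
Your approach is genuinely different from the paper's and rests on a clean geometric idea: the quantity $d(t)=|\gamma(t)|\sin\alpha(t)$ is exactly the distance from $o$ to the tangent line at $\gamma(t)$, so the lemma amounts to showing this distance never drops below its initial value $|\gamma(a)|$. The paper instead rewrites \eqref{eq:lem-alpha} as $\langle\gamma,\gamma'\rangle^2\le|\gamma|^2-r^2$, proves this for $\C^{1,1}$ spirals by integrating the identity $(\langle\gamma,\gamma'\rangle^2)'=2\langle\gamma,\gamma'\rangle(1+\langle\gamma,\gamma''\rangle)$ together with the convexity condition $\langle\gamma,\gamma''\rangle\le 0$, and then reduces the general case to the $\C^{1,1}$ case via outer parallel curves $\gamma_\epsilon$. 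Your route via the monotone tangent angle $\phi=\theta+\alpha$ is more direct and makes the role of convexity with respect to $o$ very transparent.

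There is, however, a genuine regularity gap in your argument. From ``$d'\ge 0$ almost everywhere on the smooth portions'' together with upward jumps at corners you conclude that $d$ is nondecreasing. This implication fails in general: a continuous monotone function may have vanishing derivative almost everywhere (the Cantor function), so one needs absolute continuity, not just continuity plus $d'\ge 0$ a.e. For a merely locally convex curve the tangent angle $\phi$ is monotone but need not be absolutely continuous in arclength; the curvature measure of a convex body can have a singular continuous part. Your corner analysis handles the jump part of $d\phi$ but not the singular continuous part, and on that part your pointwise identity $d'=r\cos\alpha\cdot\phi'$ loses information. The fix is short: either carry out the computation at the level of Lebesgue--Stieltjes measures, obtaining $dd = r\cos\alpha\,d\phi$ with $d\phi\ge 0$ as a measure (the product and chain rules for BV functions apply since $r$ and $\theta$ are Lipschitz), or adopt the paper's device of passing to the $\C^{1,1}$ outer parallel curve $\gamma_\epsilon$, where your pointwise argument is rigorous, and then let $\epsilon\to 0$.
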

\begin{proof}
We may assume, for convenience, that the speed of $\gamma$ is $1$. Then
taking the cosine of both sides of \eqref{eq:lem-alpha} and squaring yields
\be\label{eq:lem-alpha2}
\langle\gamma,\gamma'\rangle^2\leq |\gamma|^2-r^2.
\ee
Recall that by \eqref{eq:alpha-leq-pi/2}, $\alpha(t)\leq\pi/2$ which in turn yields that $\langle\gamma,\gamma'\rangle\geq 0$. Thus
\eqref{eq:lem-alpha2} is equivalent to \eqref{eq:lem-alpha}.
To establish \eqref{eq:lem-alpha2}, first assume that $\gamma$ is $\C^{1,1}$. Then the left hand side of \eqref{eq:lem-alpha2} is Lipschitz;  therefore, it is differentiable almost everywhere and satisfies the fundamental theorem of calculus. Furthermore, since $\gamma$ is locally convex with respect to $o$, $\langle \gamma,\gamma''\rangle\leq 0$ almost everywhere. So, since $\langle\gamma'(a),\gamma(a)\rangle=0$, and $\langle\gamma,\gamma'\rangle\geq 0$,
$$
\langle\gamma(t),\gamma'(t)\rangle^2
=
2\int_a^t\langle \gamma(s),\gamma'(s)\rangle \big(1+\langle \gamma(s),\gamma''(s)\rangle\big)ds
\leq
2\int_a^t\langle \gamma(s),\gamma'(s)\rangle ds
=
 |\gamma(t)|^2-r^2,
$$
as desired. To establish the general case we consider the outer parallel curves $\gamma_\epsilon$ of $\gamma$ at distance $\epsilon>0$. These curves are given by setting $\gamma_\epsilon(a):=\gamma(a)+\epsilon \gamma(a)/|\gamma(a)|$, and requiring that $\gamma_\epsilon$ maintain constant distance $\epsilon$ from $\gamma$. Since $\gamma$ is locally convex, $\gamma_\epsilon$ is $\C^{1,1}$ \cite[Prop. 2.4.3]{hormander}. Furthermore, it is not difficult to see that $\gamma_\epsilon$ is a spiral. So $\gamma_\epsilon$ satisfies \eqref{eq:lem-alpha2}. Next note that
for each differentiable point $\gamma(t)$ of $\gamma$ there exists a unique point $\gamma_\epsilon (t_\epsilon)$ of $\gamma_\epsilon$ which is closest to $\gamma(t)$. Then $\alpha(t)=\alpha_\epsilon(t_\epsilon)$ where $\alpha_\epsilon:=\angle(\gamma_\epsilon,\gamma'_\epsilon)$.
Thus
$
\alpha(t)=\alpha_\epsilon(t_\epsilon)\geq\sin^{-1}\left(\frac{r+\epsilon}{|\gamma_\epsilon(t_\epsilon)|}\right).
$
Letting $\epsilon\to 0$ completes the proof.
 \end{proof}

Since for a spiral $\gamma(t)$ with $L(\gamma)\neq 0$, $\alpha(t)\leq\pi/2$, the last lemma shows that 
$\alpha(t)\to\pi/2$ as $\gamma(t)\to \gamma(a)$. This observation, together with  some basic convex analysis, yields:

\begin{lem}\label{lem:continuous}
The efficiency functional $E$ is continuous on the space of spirals $\mathcal{S}([a,b])$.
\end{lem}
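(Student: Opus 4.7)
\emph{Proof plan.} Suppose $\gamma_n\to\gamma$ uniformly in $\mathcal{S}([a,b])$, so that $|\gamma_n|\to|\gamma|$ uniformly on $[a,b]$. The plan is to split into two cases depending on whether $L(\gamma)=0$ or $L(\gamma)>0$.

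First suppose $L(\gamma)=0$, so $\gamma$ is a constant spiral of height $h:=|\gamma|\geq 1$ and $E(\gamma)=4\sqrt{h^2-1}/h^2$ by \eqref{eq:LP0}. For those $n$ with $L(\gamma_n)=0$, continuity of \eqref{eq:LP0} in $|\gamma_n|$ alone yields $E(\gamma_n)\to E(\gamma)$. For the remaining $n$, reparameterize $\gamma_n$ with constant speed and apply Lemma \ref{lem:alpha3}: since $|\gamma_n(a)|/|\gamma_n(t)|\to 1$ uniformly in $t$, we get $\alpha_n(t)\geq\sin^{-1}(|\gamma_n(a)|/|\gamma_n(t)|)\to\pi/2$ uniformly a.e., and together with the spiral bound $\alpha_n\leq\pi/2$ from \eqref{eq:alpha-leq-pi/2}, this forces $\alpha_n\to\pi/2$ uniformly a.e. Proposition \ref{prop:H} combined with dominated convergence applied to the integrand in \eqref{eq:H2} then gives $E(\gamma_n)\to 4\sqrt{h^2-1}/h^2=E(\gamma)$.

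Next suppose $L(\gamma)>0$. Then $E=H/L$ and it suffices to show $L(\gamma_n)\to L(\gamma)$ and $H(\gamma_n)\to H(\gamma)$. The plan is to parameterize each spiral by its height. Since spirals have monotone height, the substitution $ds=dh/\cos\alpha$ converts $L$ and $H$ to integrals over $[h_0,h_1]$ of $1/\cos\alpha(h)$ and $G(h,\alpha(h))/\cos\alpha(h)$ respectively, where $G(h,\alpha):=\int_0^{2\pi} F(h,\alpha,\theta)\,d\theta$ is the inner $\theta$-integral of the integrand in \eqref{eq:H2}. Uniform convergence forces the endpoint heights $h_{0,n}\to h_0$, $h_{1,n}\to h_1$, and standard convex analysis (upper semicontinuity of tangent cones to locally convex curves) delivers a.e.\ convergence $\alpha_n(h)\to\alpha(h)$. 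Lemma \ref{lem:alpha3} provides an integrable dominating function: it forces $1/\cos\alpha_n(h)\leq h/\sqrt{h^2-h_{0,n}^2}$, whose singularity at $h_{0,n}$ is integrable and whose bound transfers to a uniform dominating function on $[h_0,h_1]$ for large $n$. Dominated convergence then yields both limits.

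The main obstacle is length continuity in the second case: uniform convergence alone supplies only lower semicontinuity of length, and a spiral may in principle wind arbitrarily many times around $o$. The upper semicontinuity hinges on local convexity with respect to $o$ (which forbids tangent-direction oscillation, and so produces a.e.\ convergence of $\alpha_n$) combined with the Lemma \ref{lem:alpha3} bound (which controls how close $\alpha$ can come to $\pi/2$, and hence how large $1/\cos\alpha$ can be), ensuring that the dominated convergence step actually applies uniformly in $n$.
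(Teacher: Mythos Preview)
Your treatment of the case $L(\gamma)=0$ matches the paper's: Lemma~\ref{lem:alpha3} together with $\alpha_n\le\pi/2$ forces $\alpha_n\to\pi/2$, and dominated convergence on the bounded integrand of \eqref{eq:H2} finishes it.

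For $L(\gamma)>0$, however, there is a genuine error. The inequality you draw from Lemma~\ref{lem:alpha3} points the wrong way: that lemma gives $\alpha\ge\sin^{-1}(h_0/h)$, hence $\sin\alpha\ge h_0/h$, hence $\cos\alpha\le\sqrt{h^2-h_0^2}/h$, and therefore
\[
\frac{1}{\cos\alpha}\;\ge\;\frac{h}{\sqrt{h^2-h_0^2}},
\]
not $\le$. So Lemma~\ref{lem:alpha3} supplies a \emph{lower} bound for $1/\cos\alpha$, not a dominating function. In fact no uniform upper bound on $1/\cos\alpha$ is available: a spiral may keep $\alpha$ as close to $\pi/2$ as one likes over any height range, and the definition of spiral even allows intervals of constant height, where $\cos\alpha=0$ and your substitution $ds=dh/\cos\alpha$ breaks down altogether.

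The paper sidesteps all of this by applying dominated convergence directly to formula~\eqref{eq:H2} for $E(\gamma_k)$ in the constant-speed parameter on $[a,b]$, rather than treating $H$ and $L$ separately. That integrand is bounded pointwise by $(\sqrt{|\gamma_k|^2-1}+1)/|\gamma_k|^2$, which is uniformly bounded since $|\gamma_k|\to|\gamma|$ uniformly; and $\alpha_k\to\alpha$ a.e.\ on $[a,b]$ follows from local convexity via the classical fact that derivatives of pointwise-convergent convex functions converge a.e. No height reparameterization, no control of $1/\cos\alpha$, and no separate length-continuity argument is needed.
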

\begin{proof}
For convenience we may assume that $[a,b]=[0,1]$.
Let $\gamma_k\colon[0,1]\to\R^2$ be a sequence of spirals converging to a spiral 
$\gamma\colon[0,1]\to\R^2$.  We have to show that $E(\gamma_k)\to E(\gamma)$. To this end, we may assume that all spirals have constant speed. First suppose that $L(\gamma)=0$. If $L(\gamma_k)=0$ as well, then we are done by \eqref{eq:LP0}. So we may assume that $L(\gamma_k)>0$, by passing to a subsequence.  Then, by Proposition \ref{prop:H}
\begin{multline}\label{eq:HP^k}
E(\gamma_k)
=\int_0^{1}\int_0^{2\pi} \frac{1}{|\gamma_k(t)|^2}\left| \sqrt{|\gamma_k(t)|^2-1}\sin\big(\alpha_k(t)\big)\cos(\theta)+\cos\big(\alpha_k(t)\big) \right|d\theta dt.
\end{multline}
 Note that $\gamma_k(a)\to\gamma(a)$ and $\gamma_k(t)\to\gamma(t)=\gamma(a)$.  So $\gamma_k(t)\to\gamma_k(a)$. Consequently, by Lemma \ref{lem:alpha3}, $\alpha_k(t)\to \pi/2$. So, since the integrand in \eqref{eq:HP^k} is bounded,  the dominated convergence theorem yields that
$$
E(\gamma_k) \to\int_0^1\int_0^{2\pi}\frac{\sqrt{|\gamma(a)|^2-1}}{|\gamma(a)|^2}|\cos(\theta)|\,d\theta dt=4\frac{\sqrt{|\gamma(a)|^2-1}}{|\gamma(a)|^2}=E(\gamma),
$$
as desired.
Next suppose that $L(\gamma)>0$, then we may assume that $L(\gamma_k)>0$ as well. So, again \eqref{eq:HP^k} holds.
By assumption $\gamma_k\to\gamma$ uniformly.  Furthermore, since $\gamma$ and $\gamma_k$ are locally convex, it follows that $\gamma'_k\to\gamma'$ almost everywhere on $[0,1]$. This can be shown by representing $\gamma_k$, $\gamma$ locally as graphs of  convex functions and applying well-known results on convergence of derivatives from classical convexity theory; e.g., see \cite[C(9), p. 20]{roberts-varberg}, \cite[Lem. 2]{tsuji1952}, or \cite{lackovic1982}. 
So $\alpha_k\to\alpha$ almost everywhere on $[0,1]$. Thus by the dominated convergence theorem
\begin{multline*}
E(\gamma_k) \to 
 \int_0^{1}\int_0^{2\pi} \frac{1}{|\gamma(t)|^2}\left| \sqrt{|\gamma(t)|^2-1}\sin\big(\alpha(t)\big)\cos(\theta)+\cos\big(\alpha(t)\big) \right|d\theta dt=E(\gamma),
\end{multline*}
which completes the proof.
\end{proof}

A \emph{polygonal curve} $P$  is a collection of line segments determined by a sequence of points $p_0,\dots, p_m\in\R^2$ with $p_{i+1}\neq p_{i}$. We also allow $P$ to be a single point, and use the formal notation $P=(p_0,\dots, p_m)$ to specify a polygonal curve. The line segments  $p_ip_{i+1}$  are called the \emph{edges} of $P$. Each polygonal curve $P$ admits a unique constant speed parameterization $\gamma_P\colon[0,1]\to P$, with $\gamma_p(0)=p_0$ which traces the edges of $P$.  The \emph{distance} between a pair of polygonal curves $P^1$, $P^2$ is defined as 
$
\dist\big(\gamma_{P^1},\gamma_{P^2}\big),
$
the metric given by \eqref{eq:sup-norm}.
 Let $\P^m$ denote the space of polygonal curves with at most $m$ edges in $\R^2$, endowed with the topology induced by $\dist$. Then  $\P^m$  is locally compact.
We say that $P\in\P^m$ is a \emph{polygonal spiral}  provided that $\gamma_P$ is a spiral. Let $\mc{S}^m$ be the collection of polygonal spirals with at most $m$ edges.  Lemma \ref{lem:alpha3} together with 
Blaschke's selection principle \cite[Thm. 7.3.8]{bbi:book} quickly yields \cite[Lem. 7.3]{ghomi-wenk-arXiv2020}:

\begin{lem}\label{lem:spiral-compact}
The space of polygonal spirals $\mc{S}^m$ is locally compact, for every $m\geq 0$.
\end{lem}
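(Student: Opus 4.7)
The plan is to establish local compactness of $\mc{S}^m$ by showing that it is a closed subset of the locally compact metric space $\P^m$; since closed subsets of locally compact Hausdorff spaces are locally compact, this suffices. So suppose $P^k\in\mc{S}^m$ is a sequence of polygonal spirals converging in the uniform metric \eqref{eq:sup-norm} to some $P\in\P^m$; we must show $P\in\mc{S}^m$, i.e.\ that $\gamma_P$ is a (generalized) spiral.

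If $L(\gamma_P)=0$, then $P$ is a single point $p$ with $|p|=\lim_k|\gamma_{P^k}(a)|\geq 1$, so $\gamma_P$ is a generalized spiral by definition. Otherwise $L(\gamma_P)>0$, and we verify the four conditions (i)--(iv) defining a spiral. Condition (iv), $|\gamma_P(a)|\geq 1$, and condition (ii), monotonicity of $|\gamma_P|$, both pass directly to uniform limits. For condition (i), local convexity with respect to $o$, the turn at each vertex of $P$ and the position of its local supporting line relative to $o$ are continuous in the vertex coordinates, and each edge of $P$ is a direction-limit of a corresponding edge of $P^k$ (even when some vertices of $P^k$ coalesce in the limit), so the condition passes to the limit at every vertex of $P$.

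The main obstacle, and the step that genuinely requires Lemma \ref{lem:alpha3}, is condition (iii): $\alpha(a)=\pi/2$ for $\gamma_P$. This is immediate when $\lim_k p_1^k\neq p_0:=\lim_k p_0^k$, since then the first edge of $P^k$ converges in direction to the first edge of $P$ and orthogonality to $p_0^k$ passes to the limit. The difficulty is that several initial vertices of $P^k$ may coalesce at $p_0$, so that the first edge of $P$ arises as $p_0 p_j=\lim_k p_{j-1}^k p_j^k$, where $j$ is the smallest index with $p_j:=\lim_k p_j^k\neq p_0$. For each $k$, applying Lemma \ref{lem:alpha3} to $\gamma_{P^k}$ at the parameter corresponding to the start of its $j$-th edge yields
\[
\alpha_k \;\geq\; \sin^{-1}\!\left(\frac{|p_0^k|}{|p_{j-1}^k|}\right),
\]
and since $p_0^k, p_{j-1}^k\to p_0$ the right-hand side tends to $\pi/2$. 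Combined with the upper bound $\alpha_k\leq\pi/2$ from \eqref{eq:alpha-leq-pi/2}, this forces the angle between the edge $p_{j-1}^k p_j^k$ and the position vector $p_{j-1}^k$ to converge to $\pi/2$, so the first edge $p_0 p_j$ of $P$ is orthogonal to $p_0$, establishing condition (iii). Thus $\gamma_P$ is a spiral, $\mc{S}^m$ is closed in $\P^m$, and therefore locally compact.
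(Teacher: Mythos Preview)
Your proof is correct and follows essentially the same line as the paper's: the only step that is not a routine limit argument is verifying condition (iii) when several initial vertices of $P^k$ coalesce, and you handle this exactly as intended by invoking Lemma~\ref{lem:alpha3} to force the angle at the surviving first edge to $\pi/2$. The paper's sketch packages the compactness via Blaschke's selection principle whereas you use the already-stated local compactness of $\P^m$ together with closedness of $\mc{S}^m$, but these amount to the same thing.
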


Next we observe that

\begin{lem}\label{lem:inequality}
The efficiency of any polygonal spiral is at most $2$.
\end{lem}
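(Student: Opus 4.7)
The plan is to prove the bound by induction on the number $m$ of edges of the polygonal spiral $P$. The base case $m=0$ is immediate from \eqref{eq:LP0}: the map $h \mapsto 4\sqrt{h^2-1}/h^2$ is at most $2$ for $h\geq 1$, with equality only at $h=\sqrt 2$. For $m=1$, I would use the closed-form expression for $E(h_0, h_1)$ derived in Section \ref{sec:horizon}; one verifies analytically (or via the software \cite{ghomi-wenk:Mathematica2}, as illustrated in Figure \ref{fig:line-eff}) that $E(h_0,h_1)\leq 2$ for all $h_1 \geq h_0 \geq 1$, with equality only at $h_0=h_1=\sqrt 2$.

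For the inductive step, assume the bound is known for polygonal spirals with fewer than $m$ edges, and suppose for contradiction that some $P \in \mc{S}^m$ has $E(P)>2$. By Lemma \ref{lem:spiral-compact}, $P$ has a compact neighborhood $\mc K \subset \mc S^m$ (cut out, say, by uniform bounds on $|p_0|$ and $|p_m|$, and hence on $L$). By Lemma \ref{lem:continuous}, the efficiency $E$ attains its maximum on $\mc K$ at some $P^*$ with $E(P^*)\geq E(P)>2$. If $P^*$ is degenerate---either two consecutive vertices coincide, or three consecutive vertices are collinear---then $P^*\in \mc S^{m-1}$, so the induction hypothesis gives $E(P^*)\leq 2$, a contradiction.

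Otherwise all $m$ edges of $P^*$ are genuinely present. I would then carry out a first-variation argument at an interior vertex $p_i^*$ with $1\leq i \leq m-1$, perturbing it within the admissible cone of directions (those preserving local convexity with respect to $o$ and the conditions $\alpha\leq\pi/2$ and $|\gamma|$ nondecreasing). Using the closed-form formula for $H(h_0,h_1,\ell)$ from Section \ref{sec:horizon}, I would differentiate $H(P^*)$ and $L(P^*)$ with respect to $p_i^*$; the necessary condition for a maximum is that the directional derivative of $H/L$ is non-positive in every admissible direction. A natural candidate perturbation is to push $p_i^*$ toward the chord $p_{i-1}^* p_{i+1}^*$: this strictly decreases $L$ by the triangle inequality, and one needs to control the corresponding change in $H$.

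The main obstacle is exactly this control of $\delta H$: the horizon formula is intricate, and its analysis under a chord-ward perturbation comes down to comparing the horizon of the original pair of edges $p_{i-1}^* p_i^*$ and $p_i^* p_{i+1}^*$ with that of their chord replacement $p_{i-1}^* p_{i+1}^*$. The key estimate I would need is that, in the chord-ward direction, $v\cdot \nabla H \leq 2\, v \cdot \nabla L$ (with equality only in the degenerate case), so that $\delta(H/L)>0$ whenever $H/L>2$ and maximality is contradicted. This is essentially an infinitesimal version of the $m=1$ bound applied locally at $p_i^*$, and it is where the detailed computation is concentrated.
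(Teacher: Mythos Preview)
Your induction-and-compactness framework is reasonable, but the variational step has a genuine gap and is aimed at the wrong vertex. The estimate you isolate, $v\cdot\nabla H \leq 2\,v\cdot\nabla L$ for the chord-ward direction at an interior vertex $p_i^*$, is not established, and you acknowledge this is where the work lies. Worse, the sign logic is backwards: with $\delta L<0$, the inequality $\delta H\leq 2\,\delta L$ gives $\delta H/\delta L\geq 2$, which does \emph{not} force $\delta(H/L)>0$ when $H/L>2$; you would need the reverse inequality $\delta H\geq 2\,\delta L$. Even with the sign corrected, this amounts to showing that the ``two-edge corner'' $(p_{i-1}^*,p_i^*,p_{i+1}^*)$ loses horizon at rate at most $2$ per unit length lost when flattened --- but that corner is not itself a spiral (no orthogonality at its initial point), so neither induction nor the $m=1$ formula $E(h_0,h_1)\leq 2$ applies directly. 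You are left with a computation as hard as the original problem.

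The paper's proof sidesteps all of this by perturbing the \emph{initial} vertex $p_0$ rather than an interior one: slide $p_0$ along the arc of the circle of diameter $op_1$, so that $p_0^t$ remains orthogonal to $p_0^t p_1$ and $|p_0^t|=(1+t)r$. This keeps $P^t$ a spiral and affects only the first edge, so both $L'(0)$ and $H'(0)$ come from the single closed-form formula $H(h_0,\ell)$ of Section~\ref{sec:horizon}. The stationarity condition $E'(0)=0$ at a maximizer then yields directly
\[
E(P)=\frac{H'(0)}{L'(0)}=\frac{4\sqrt{r^2-1}}{r^2}=\mathcal{E}(r)\leq 2,
\]
with a separate one-line check that the boundary case $r=1$ cannot occur. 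The point is that the orthogonality condition $\alpha(a)=\pi/2$ singles out $p_0$ as the natural vertex to vary, and reduces the first-order condition to the $m=0$ formula you already have.
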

\begin{proof}
Fix an integer $m\geq 0$, and number $R>1$. By Lemma \ref{lem:spiral-compact} there exists a polygonal spiral  $P=(p_0,\dots, p_k)$ which maximizes $E$ among elements of $\mathcal{S}^m$ which lie in the ball of radius $R$ centered at $o$. We need to show that $E(P)\leq 2$. If $P$ is a singleton, this is guaranteed by \eqref{eq:LP0}. So we may assume that $k\geq1$. Then $r:=|p_0|<R$.  Note that for $-\epsilon< t<\epsilon$ there exists a point $p_0^t$ such that $p_0^t$ is orthogonal to $p_0^tp_1$, and $|p_0^t|=(1+t)r$, assuming that $\epsilon$ sufficiently small. 
Indeed $p_0^t$ lies on an arc of the circle of radius $|p_1|/2$ which is centered at the midpoint of $op_1$; see Figure \ref{fig:perturb}.
\begin{figure}[h]
\begin{overpic}[height=1.1in]{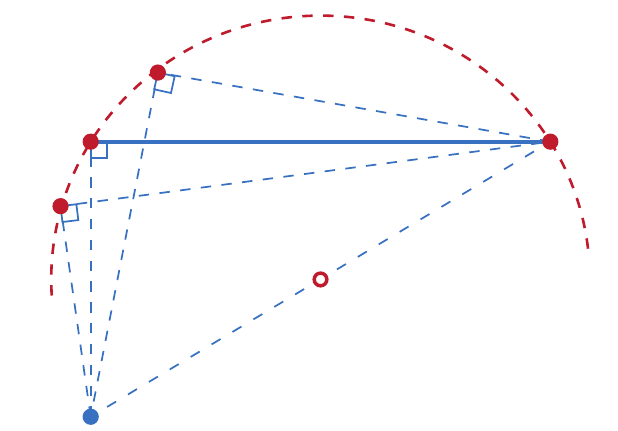}
\put(5,46){\small$p_0$}
\put(92,46){\small$p_1$}
\put(9,-1){\small$o$}
\end{overpic}
\caption{}\label{fig:perturb}
\end{figure}
Furthermore, choosing $\epsilon$ sufficiently small, we can ensure that $P^t:=(p_0^t,p_1,\dots, p_k)$ is locally convex. Thus $P^t$ will be a spiral provided that $|p_0^t|\geq 1$, which will be the case for small $\epsilon$ provided that $r>1$ or else $t\geq 0$. Let us assume first that $r>1$. Then $P^t$ will be a spiral in the ball $B_R(o)$ for $-\epsilon<t<\epsilon$. Let $L(t)$, $H(t)$, and $E(t)$ denote respectively the length, horizon, and efficiency of $P^t$. Then
$
0= E'(0)=(H'(0)L(0)-H(0)L'(0))/L(0)^2,
$
which in turn yields
$
H'(0)/L'(0)=H(0)/L(0)=E(P).
$
To compute $L'(0)$ note that
$$
L(t)=L(p_0^t p_1)+L\big((p_1,\dots, p_k)\big)=\sqrt{|p_1|^2-(r+t)^2}+L\big((p_1,\dots, p_k)\big).
$$
So it follows that
$
L'(0)=-r/\sqrt{|p_1|^2-r^2}.
$
Next, to compute $H'(0)$, note that 
$
H(t)=H(p_0^t p_1)+H\big((p_1,\dots, p_k)\big).
$
Furthermore, by \eqref{eq:h0-ell} we have
$
H(p_0^t p_1)=H\big(r+t,L(p_0^t p_1)\big).
$
Now a computation \cite{ghomi-wenk:Mathematica2} yields that
\be\label{eq:H'0}
H'(0)=\frac{d}{dt} H\big(r+t,L(p_0^t p_1)\big)\Big|_{t=0}=-\frac{4}{r}\frac{\sqrt{r^2-1}}{\sqrt{|p_1|^2-r^2}}.
\ee
So we conclude that
$
E(P)=\frac{H'(0)}{L'(0)}=4\frac{\sqrt{r^2-1}}{r^2}\leq 2,
$
as desired. It remains to show that our earlier assumption  that $r>1$ was justified. Suppose then, towards a contradiction, that $r=1$. Then $E(t)$ and $H(t)$ will still be well defined for $t\geq 0$, and so will their right-hand derivatives at $0$. By \eqref{eq:H'0}, 
$H'_+(0)=0$. Thus
$$
E'_+(0)=\frac{-H(0)L'(0)}{L(0)^2}=\frac{H(0)}{L(0)^2}\frac{1}{\sqrt{|p_1|^2-1^2}}>0.
$$
So $E(t)> E(0)$, or  $E(P^t)> E(P)$, for small $t>0$ which is the desired contradiction. 
\end{proof}

Lemma \ref{lem:inequality} together with Lemma \ref{lem:continuous}  yields the main result of this section via a polygonal approximation:

\begin{prop}\label{prop:inequality}
The efficiency of any spiral is at most $2$.
\end{prop}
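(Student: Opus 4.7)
The plan is to derive Proposition \ref{prop:inequality} as the obvious combination of Lemma \ref{lem:inequality} (efficiency of polygonal spirals is $\leq 2$) and Lemma \ref{lem:continuous} (continuity of $E$ on $\mc{S}([a,b])$) by approximating an arbitrary spiral $\gamma\colon[a,b]\to\R^2$ in the uniform metric by a sequence of polygonal spirals $P_k\in\mc{S}^{m_k}$. If such a sequence can be produced with $P_k\to\gamma$ uniformly, then $E(P_k)\leq 2$ for every $k$, and passing to the limit via Lemma \ref{lem:continuous} gives $E(\gamma)\leq 2$.

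The construction of $P_k$ is the real content. First I would reparameterize $\gamma$ with constant speed, and treat the degenerate case $L(\gamma)=0$ separately (here $\gamma$ is a constant, so it is already a polygonal spiral by convention, and there is nothing to prove). Assuming $L(\gamma)>0$, I would choose a sequence of partitions $a=t_0^k<t_1^k<\dots<t_{m_k}^k=b$ with mesh tending to zero, insisting that $t_0^k=a$ (so that the orthogonality condition $\alpha(a)=\pi/2$ can be inherited) and that each $t_i^k$ is a differentiable point of $\gamma$ where $|\gamma|$ is also differentiable. Set $p_i^k:=\gamma(t_i^k)$ and define $P_k:=(p_0^k,\dots,p_{m_k}^k)$. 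Uniform convergence $\gamma_{P_k}\to\gamma$ follows from uniform continuity of $\gamma$ together with the standard fact that the constant speed parameterization of an inscribed polygon in a rectifiable curve converges uniformly when the mesh goes to zero (using that $L(\gamma|_{[t_{i-1}^k,t_i^k]})\to 0$ uniformly in $i$).

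The subtle point is ensuring that each $P_k$ is actually a polygonal \emph{spiral}. I would verify the four defining conditions in order. Local convexity with respect to $o$ for $P_k$ (a finite polygonal condition on consecutive edges) follows from the local convexity of $\gamma$ with respect to $o$: for a fine enough partition, each vertex $p_i^k$ has a local support line of $\gamma$ lying above it, and since the polygonal vertices lie on $\gamma$ while the chords lie on the near side of $\gamma$ relative to $o$ (locally $\gamma$ is the graph of a convex function over the chord direction, placing the chord between $\gamma$ and $o$), the inscribed chords still have $o$ on the correct side of the discrete support directions. Monotonicity of $|P_k|$ is automatic because $|\gamma|$ is nondecreasing and the heights $|p_i^k|=|\gamma(t_i^k)|$ are sampled from it; between vertices of $P_k$ one uses that the chord from $p_{i-1}^k$ to $p_i^k$ makes an angle $\leq\pi/2$ with $p_{i-1}^k$ thanks to Lemma \ref{lem:alpha3}, which I would use in its inscribed-chord form (the chord inherits $\alpha\leq\pi/2$ by a mean value type argument, so $|P_k|$ is nondecreasing along the chord too). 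The condition $|P_k(a)|=|\gamma(a)|\geq 1$ is immediate. The orthogonality $\alpha_{P_k}(a)=\pi/2$ is the one condition that does not hold automatically for inscribed polygons, since the first edge is a chord rather than the tangent at $a$; here I would either replace the first chord by the true tangent of $\gamma$ at $a$ (which is a well-defined supporting ray by the locally convex structure of $\gamma$) projected to meet the vertical line through $p_1^k$, or, cleanly, insert an auxiliary first vertex $p_0^{k,\ast}$ on the tangent ray at $\gamma(a)$ close to $\gamma(a)$, at distance $|\gamma(a)|$ from $o$, in the same manner as the perturbation in Figure \ref{fig:perturb}, so that the modified polygon starts with an edge orthogonal to its initial position vector while still converging uniformly to $\gamma$.

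With the sequence $P_k\in\mc{S}^{m_k}$ of polygonal spirals constructed and shown to converge uniformly to $\gamma$, Lemma \ref{lem:inequality} gives $E(P_k)\leq 2$ for every $k$, and Lemma \ref{lem:continuous} gives $E(P_k)\to E(\gamma)$, so $E(\gamma)\leq 2$. The main obstacle in this argument is exactly the orthogonality condition at the initial point: inscribing vertices on $\gamma$ is easy, but matching the spiral endpoint condition $\alpha(a)=\pi/2$ for every approximant requires the small tangential perturbation described above, together with a check that this perturbation does not destroy local convexity with respect to $o$ for the first two edges of $P_k$, which follows because the tangent line of a spiral at $a$ is the local support line there and hence lies above the remainder of $\gamma$.
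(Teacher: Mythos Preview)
Your proposal is correct and follows precisely the route the paper takes: the paper's entire proof of Proposition~\ref{prop:inequality} is the single sentence ``Lemma~\ref{lem:inequality} together with Lemma~\ref{lem:continuous} yields the main result of this section via a polygonal approximation.'' You have supplied the details the paper omits, correctly identifying the orthogonality condition $\alpha(a)=\pi/2$ as the one spiral axiom not automatically inherited by inscribed polygons and proposing the natural fix via a tangential first edge.
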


Proposition \ref{prop:inequality} together with Proposition \ref{prop:decomposition} establishes  \eqref{eq:main}. The rest of this work will be concerned with characterizing the case of equality in \eqref{eq:main}.

\section{Instantaneous Efficiency}\label{sec:geq-sqrt2}
Here we investigate another method for bounding the efficiency of spirals via a notion  first used in the proof of Proposition \ref{prop:H}.
Let $\gamma\colon[a,b]\to\R^3$ be a constant speed curve with $|\gamma|\geq 1$, and $t\in[a,b]$ be a differentiable point of $\gamma$ with $|\gamma'(t)|\neq 0$.  We define the  \emph{instantaneous efficiency} of $\gamma$ at  $t$ as
$$
E_\gamma(t):=\int_0^{2\pi} \big|F(|\gamma(t)|,\alpha(t),\theta)\big|d\theta,
$$
where
$$
F(h,\alpha,\theta):=\frac{1}{h^2}\left(\sqrt{h^2-1}\sin(\alpha)\cos(\theta)+\cos(\alpha)\right).
$$
If $t$ is a differentiable point of $t\mapsto H(\gamma |_{[a,t]})$, then by \eqref{eq:H2}
$
E_\gamma(t)=\dt H\left(\gamma |_{[a,t]}\right).
$
So $E_\gamma(t)$ is the rate of change of horizon along $\gamma$. Furthermore, by Proposition \ref{prop:H},
$
E(\gamma)=\frac{1}{b-a}\int_a^b E_\gamma(t)dt\leq \sup_{[a,b]} E_\gamma(t).
$
Thus to find an upper bound for $E(\gamma)$ it suffices to bound $E_\gamma$. To this end we compute the  above integral as follows. Let
$$
\Omega:=\{(h,\alpha)\,|\, h\geq 1,\; \sin^{-1}\left(1/h\right)\leq\alpha\leq \pi/2\}
$$
be the phase space of  possible values for  $(|\gamma(t)|,\alpha(t))$ at differentiable points of  curves $\gamma$ which lie outside $\S^2$ and whose tangent lines avoid $\inte(B^3)$. For every $(h,\alpha)\in\Omega$ we set
$
\theta_0=\theta_0(h,\alpha):=\cos ^{-1}(-\cot (\alpha )/\sqrt{h^2-1}).
$
Since $\sin(\alpha)\geq 1/h$,  $|\cot(\alpha)/\sqrt{h^2-1}|\leq 1$. So $\theta_0$ is well defined. Also note that  $F(h,\alpha, \pm\theta_0)=0$.
Now we may compute that \cite{ghomi-wenk:Mathematica2}
\begin{gather*}
\mathcal{E}(h,\alpha):=\int_0^{2\pi}|F(h,\alpha,\theta)|\,d\theta
=\int_{-\theta_0}^{\theta_0} F(h,\alpha,\theta)\,d\theta-\int_{\theta_0}^{2\pi-\theta_0} F(h,\alpha,\theta) d\theta\\
=\frac{4}{h^2} \left(\sqrt{h^2 \sin ^2(\alpha )-1}+\cos (\alpha ) \sin ^{-1}\left(\frac{\cot (\alpha
   )}{\sqrt{h^2-1}}\right)\right).
\end{gather*}
Then 
$
E_\gamma(t)=\mathcal{E}\big(|\gamma(t)|,\alpha(t)\big).
$
For any set $X\subset[a,b]$ with measure $\mu(X)\neq 0$ we define
$
E(\gamma\big|_X):=\frac{1}{\mu(X)}\int_X E_\gamma(t)dt.
$
So we may record that
\begin{prop}\label{prop:IE}
Let $\gamma\colon[a,b]\to\R^3$ be a constant speed curve with $|\gamma|\geq 1$. If tangent lines of $\gamma$ avoid $\inte(B^3)$, then for any set $X\subset[a,b]$ with nonzero measure
$$
E(\gamma\big|_X)=\frac{1}{\mu(X)}\int_X\mathcal{E}\big(|\gamma(t)|,\alpha(t)\big) dt.
$$
\end{prop}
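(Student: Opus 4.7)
The plan is to reduce the proposition to the pointwise identity $E_\gamma(t)=\mathcal{E}(|\gamma(t)|,\alpha(t))$ at almost every $t\in X$; once this identity is known, the proposition follows by substituting into the definition $E(\gamma|_X):=\tfrac{1}{\mu(X)}\int_X E_\gamma(t)\,dt$ recorded immediately above the statement. So the task has two parts: (i) verify that $E_\gamma(t)$ is defined almost everywhere on $X$, and (ii) establish the pointwise identity.

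For (i), the constant-speed hypothesis together with Lemma \ref{lem:basic} gives $|\gamma'(t)|=L(\gamma)/(b-a)$ at almost every $t\in[a,b]$; assuming $L(\gamma)>0$ (the case $L(\gamma)=0$ being vacuous, as $\gamma$ would be constant and the integrand undefined) this derivative is nonzero almost everywhere, so both $\alpha(t)$ and hence $E_\gamma(t)$ are defined almost everywhere. Moreover, the hypothesis that tangent lines of $\gamma$ avoid $\inte(B^3)$ asserts that the distance from $o$ to the tangent line at $\gamma(t)$, which equals $|\gamma(t)|\sin\alpha(t)$, is at least $1$; this yields $\sin\alpha(t)\geq 1/|\gamma(t)|$, placing $(|\gamma(t)|,\alpha(t))$ in the phase space $\Omega$ after exploiting the $\alpha\mapsto\pi-\alpha$ symmetry of $|F|$ (under which the closed form $\mathcal{E}$ is also invariant). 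In particular $\theta_0=\cos^{-1}(-\cot\alpha/\sqrt{h^2-1})$ is well defined, with $h=|\gamma(t)|$.

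For (ii), the identity $E_\gamma(t)=\mathcal{E}(|\gamma(t)|,\alpha(t))$ is precisely the explicit evaluation of $\int_0^{2\pi}|F(h,\alpha,\theta)|\,d\theta$ carried out just above the proposition. The two zeros of $\theta\mapsto F(h,\alpha,\theta)$ on $[0,2\pi)$ are $\theta_0$ and $2\pi-\theta_0$, between which $F$ has constant sign; splitting the integral accordingly and using the elementary antiderivative $\sqrt{h^2-1}\sin\alpha\sin\theta+\cos\alpha\,\theta$ reduces the computation to evaluation at these endpoints. Collecting terms and simplifying via the identities $(h^2-1)\sin^2\alpha\sin^2\theta_0=h^2\sin^2\alpha-1$ (obtained by substituting $\cos\theta_0=-\cot\alpha/\sqrt{h^2-1}$) and $\theta_0-\pi/2=\sin^{-1}(\cot\alpha/\sqrt{h^2-1})$ converts the result into the stated closed form for $\mathcal{E}(h,\alpha)$. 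The only genuine obstacle is this algebraic manipulation, which is mechanical and, as the paper notes, can be verified with the companion Mathematica package \cite{ghomi-wenk:Mathematica2}; everything else is immediate from the definitions and Lemma \ref{lem:basic}.
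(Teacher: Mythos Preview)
Your proposal is correct and follows essentially the same route as the paper: the proposition is simply a record of the pointwise identity $E_\gamma(t)=\mathcal{E}(|\gamma(t)|,\alpha(t))$ derived in the preceding display, substituted into the definition $E(\gamma|_X):=\tfrac{1}{\mu(X)}\int_X E_\gamma(t)\,dt$. Your observation about the $\alpha\mapsto\pi-\alpha$ symmetry of both $\int_0^{2\pi}|F|\,d\theta$ and the closed form $\mathcal{E}$ is a useful clarification, since the phase space $\Omega$ is defined with $\alpha\le\pi/2$ while the hypothesis of the proposition only forces $\sin\alpha\ge 1/|\gamma|$; the paper glosses over this point.
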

 The values of $\mathcal{E}$ on the phase space $\Omega$, which range from $0$ to about $2.6$, are shown in Figure \ref{fig:inst-eff}.
\begin{figure}[h]
\begin{overpic}[height=1.5in]{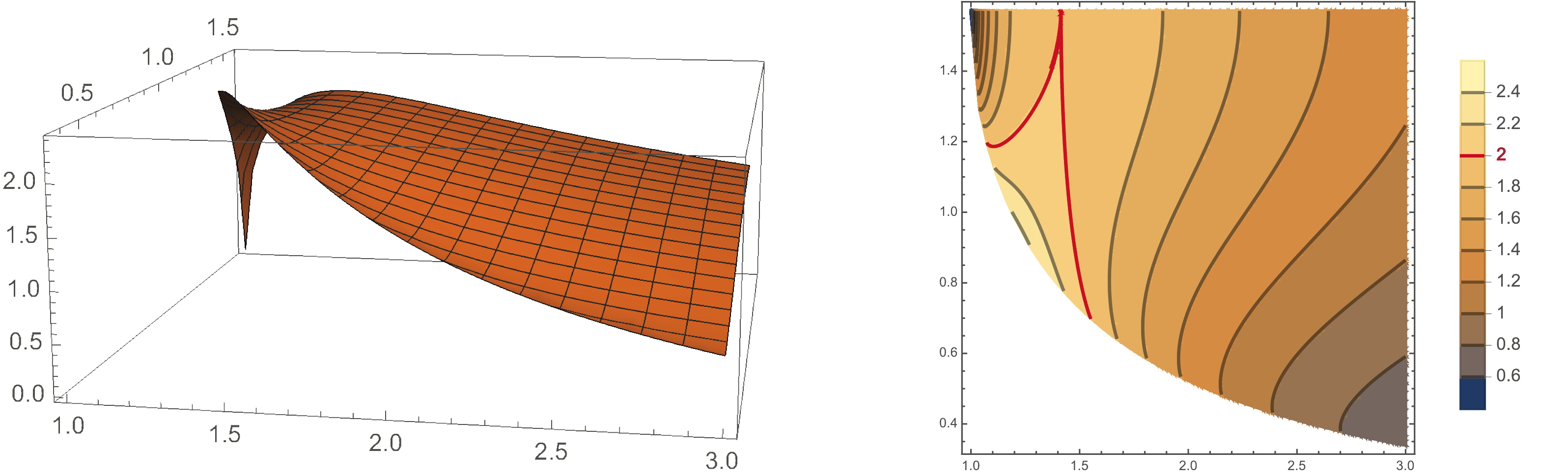}
\end{overpic}
\caption{}\label{fig:inst-eff}
\end{figure}
 Since $\mathcal{E}$ may exceed $2$, it is not possible to obtain the bound $E\leq 2$ for all spirals by bounding $\mathcal{E}$; however, spirals with initial height $\geq\sqrt2$ are special.
Set
$$
\mathcal{E}(h):=\mathcal{E}\left(h,\frac{\pi}{2}\right)=4\frac{\sqrt{h^2 -1}}{h^2}\leq 2.
$$
Note that $\mathcal{E}(h)=2$ only if $h=\sqrt2$.

\begin{lem}
Let $\gamma\colon[a,b]\to\R^2$ be a spiral with initial height $r\geq \sqrt{2}$. Then the instantaneous efficiency $E_\gamma(t)\leq \mathcal{E}(r)\leq 2$ for all $t\in[a,b]$.
\end{lem}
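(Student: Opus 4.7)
The plan is to control the instantaneous efficiency $E_\gamma(t)=\mathcal{E}(h,\alpha)$, where $h:=|\gamma(t)|$ and $\alpha:=\alpha(t)$, via a two-step reduction through the intermediate quantity $c:=h\sin\alpha$ (geometrically, the distance from $o$ to the tangent line of $\gamma$ at $\gamma(t)$). First I would extract the constraints the spiral hypothesis places on $(h,\alpha)$: since $|\gamma|$ is nondecreasing by definition of a spiral, $h\geq r$; by \eqref{eq:alpha-leq-pi/2} we have $\alpha\leq\pi/2$; and Lemma \ref{lem:alpha3} applied with initial height $r$ yields $\sin\alpha\geq r/h$, so $c\geq r\geq\sqrt{2}$, while $\sin\alpha\leq 1$ gives $h\geq c$.

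The heart of the argument is the inequality $\mathcal{E}(h,\alpha)\leq\mathcal{E}(c)$ for such $(h,\alpha)$. I would rewrite the formula of Proposition \ref{prop:IE} by substituting $\cos\alpha=\sqrt{h^2-c^2}/h$ and $\cot\alpha=\sqrt{h^2-c^2}/c$: the radical becomes $\sqrt{c^2-1}$ (a function of $c$ alone), and the arcsine contribution becomes $(\sqrt{h^2-c^2}/h)\sin^{-1}(z)$ with $z:=\sqrt{h^2-c^2}/(c\sqrt{h^2-1})$. Clearing denominators, the desired bound $\mathcal{E}(h,\alpha)\leq 4\sqrt{c^2-1}/c^2$ collapses (for $h>c$) to a single inequality on $\sin^{-1}(z)$. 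The key tool is the elementary estimate $\sin^{-1}(z)\leq z/\sqrt{1-z^2}$ on $[0,1)$, which follows from $(\sin^{-1})'(t)=(1-t^2)^{-1/2}$ being increasing. Combined with the identity $1-z^2=h^2(c^2-1)/(c^2(h^2-1))$, this transforms the problem into the purely algebraic inequality $c^2\leq h^2(c^2-1)$, which is immediate from $h\geq c$ and $c\geq\sqrt{2}$ (yielding $c^2/h^2\leq 1\leq c^2-1$).

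To finish, I would observe that $\mathcal{E}(h)=4\sqrt{h^2-1}/h^2$ has derivative $\mathcal{E}'(h)=4(2-h^2)/(h^3\sqrt{h^2-1})$, nonpositive on $[\sqrt{2},\infty)$. Hence $\mathcal{E}$ is nonincreasing there, and $\mathcal{E}(c)\leq\mathcal{E}(r)\leq\mathcal{E}(\sqrt{2})=2$. Chaining the three inequalities yields $E_\gamma(t)\leq\mathcal{E}(r)\leq 2$.

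The main obstacle will be the middle step. The hypothesis $r\geq\sqrt{2}$ is sharp: at $(h,\alpha)=(\sqrt{2},\pi/4)$ (so $c=1$) a direct calculation gives $\mathcal{E}(h,\alpha)=\pi\sqrt{2}/2>2$, so the conclusion fails without the lower bound $c\geq\sqrt{2}$. What makes the argument succeed is that the slack in $\sin^{-1}(z)\leq z/\sqrt{1-z^2}$ is exactly tight enough: the resulting algebraic condition $c^2-1\geq c^2/h^2$ is forced precisely at the threshold $c\geq\sqrt{2}$ combined with $h\geq c$, with equality throughout the chain characterizing the circular-arc spiral at height $\sqrt{2}$.
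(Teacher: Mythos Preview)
Your argument is correct and takes a genuinely different route from the paper's. Both proofs start from the same constraints $h\geq r$, $\alpha\leq\pi/2$, and (via Lemma~\ref{lem:alpha3}) $h\sin\alpha\geq r\geq\sqrt2$, and both finish using the monotonicity of $\mathcal{E}(h)=4\sqrt{h^2-1}/h^2$ on $[\sqrt2,\infty)$. The difference is the middle step. The paper fixes $h$ and shows $\partial\mathcal{E}/\partial\alpha\geq 0$ on the region $h\sin\alpha\geq\sqrt2$, concluding $\mathcal{E}(h,\alpha)\leq\mathcal{E}(h,\pi/2)=\mathcal{E}(h)$; you instead introduce the tangent-line distance $c=h\sin\alpha$ and prove the direct comparison $\mathcal{E}(h,\alpha)\leq\mathcal{E}(c)$ via the elementary bound $\sin^{-1}(z)\leq z/\sqrt{1-z^2}$ together with the algebraic reduction to $c^2\leq h^2(c^2-1)$. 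Since $c\leq h$, the paper's intermediate bound $\mathcal{E}(h)$ is actually sharper than your $\mathcal{E}(c)$, but both dominate $\mathcal{E}(r)$ and so suffice for the lemma. Your approach has the virtue of avoiding any partial-derivative computation and carrying a clean geometric meaning for $c$; the paper's approach records the stronger monotonicity fact in $\alpha$. Amusingly, the paper's final scalar inequality $x/\sqrt{1-x^2}\geq\sin^{-1}(x/\sqrt{1+x^2})$ is itself a consequence of the very estimate $\sin^{-1}(z)\leq z/\sqrt{1-z^2}$ you use, so the two arguments share the same analytic kernel under different packaging.
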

\begin{proof}
Recall that $E_\gamma(t)=\mathcal{E}(|\gamma(t)|,\alpha(t))$.
By Lemma \ref{lem:alpha3}, $\sin^{-1}(\sqrt2/r)\leq\alpha\leq \pi/2$. Since $\mathcal{E}(h,\pi/2)=\mathcal{E}(h)$, it suffices to check that $\alpha\mapsto \mathcal{E}(h,\alpha)$ is nondecreasing for $h\geq\sqrt{2}/\sin(\alpha)$. So we compute that \cite{ghomi-wenk:Mathematica2}
\begin{eqnarray*}
\frac{\partial \mathcal{E}}{\partial \alpha}(h,\alpha) 
&=&\frac{4}{h^2}\left(\cot (\alpha ) \sqrt{h^2 \sin ^2(\alpha )-1}-\sin (\alpha ) \sin ^{-1}\left(\frac{\cot (\alpha
   )}{\sqrt{h^2-1}}\right)\right)\\
   &\geq&  
    \frac{4}{h^2}\left(  \frac{\cos (\alpha )}{\sqrt{1-\cos ^2(\alpha )}}-\sin ^{-1}\left(\frac{\cos (\alpha )}{\sqrt{1+\cos
   ^2(\alpha )}}\right)\right).
\end{eqnarray*}
It remains to check that
$
\frac{x}{\sqrt{1-x^2}}-\sin ^{-1}\left(\frac{x}{\sqrt{1+x^2}}\right)\geq 0,
$
for $0\leq x\leq 1$.
Indeed this expression vanishes for $x=0$, and its derivative 
$
1/(1-x^2)^{3/2}-1/(x^2+1)
$ 
is nonnegative on $0\leq x\leq 1$. 
\end{proof}

\begin{cor}
Let $\gamma\colon[a,b]\to\R^2$ be a constant speed spiral with initial height $r \geq\sqrt{2}$. Then 
$$
E(\gamma)\leq \frac{1}{b-a}\int_a^b\mathcal{E}(|\gamma(t)|)dt\leq \mathcal{E}(r)\leq 2.
$$
Equality in the second inequality holds only when $|\gamma|\equiv r$, and equality in the third inequality holds only when $r=\sqrt2$.
\end{cor}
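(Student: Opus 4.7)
The plan is to chain together Proposition \ref{prop:IE}, the immediately preceding lemma, and an elementary calculus analysis of the one-variable function $\mathcal{E}(h) = 4\sqrt{h^2-1}/h^2$. The trivial case $L(\gamma)=0$ handles itself: then $\gamma$ is constant, $|\gamma|\equiv r$, and by \eqref{eq:LP0} all three expressions collapse to $\mathcal{E}(r)$. So assume $L(\gamma)>0$ and verify that Proposition \ref{prop:IE} applies. Viewing $\gamma$ as a planar curve in $\R^2\subset\R^3$, its tangent lines lie in $\{z=0\}$ and hence avoid $\inte(B^3)$ iff they avoid $\inte(B^2)$. By Lemma \ref{lem:alpha3}, $\alpha(t) \geq \sin^{-1}(r/|\gamma(t)|) \geq \sin^{-1}(1/|\gamma(t)|)$, and the elementary trigonometric observation used in the proof of Lemma \ref{lem:alpha} shows this is exactly the condition for the tangent line at $\gamma(t)$ to miss $\inte(B^2)$. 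Proposition \ref{prop:IE} then yields
$$E(\gamma)=\frac{1}{b-a}\int_a^b\mathcal{E}\bigl(|\gamma(t)|,\alpha(t)\bigr)\,dt.$$

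Now the proof of the preceding lemma in fact establishes the sharper pointwise bound $\mathcal{E}(|\gamma(t)|,\alpha(t)) \leq \mathcal{E}(|\gamma(t)|,\pi/2) = \mathcal{E}(|\gamma(t)|)$, via the monotonicity of $\alpha \mapsto \mathcal{E}(h,\alpha)$ on the range permitted by Lemma \ref{lem:alpha3}; integrating this over $[a,b]$ yields the first desired inequality. For the second and third inequalities I would observe, via a short computation giving $\mathcal{E}'(h)$ proportional to $2-h^2$, that $\mathcal{E}$ is strictly increasing on $[1,\sqrt{2}]$, strictly decreasing on $[\sqrt{2},\infty)$, and attains its global maximum $2$ at $\sqrt{2}$. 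Since $\gamma$ is a spiral, its height is nondecreasing with $|\gamma(t)|\geq r\geq\sqrt{2}$, so strict decrease of $\mathcal{E}$ on $[\sqrt{2},\infty)$ gives the pointwise bound $\mathcal{E}(|\gamma(t)|)\leq\mathcal{E}(r)$; integration gives the second inequality, and $\mathcal{E}(r)\leq 2$ is then immediate.

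Equality in the second inequality forces $\mathcal{E}(|\gamma(t)|)=\mathcal{E}(r)$ almost everywhere; since $\mathcal{E}$ is strictly monotone (hence injective) on $[\sqrt{2},\infty)$, this forces $|\gamma(t)|=r$ a.e., and continuity of the height function upgrades this to $|\gamma|\equiv r$. Equality in the third inequality forces $r$ to equal the unique maximizer $\sqrt{2}$. No step here is a real obstacle; the only mild subtlety is that one must extract the sharper pointwise bound $\mathcal{E}(|\gamma(t)|,\alpha(t))\leq\mathcal{E}(|\gamma(t)|)$ from the proof of the preceding lemma rather than from its stated (weaker) conclusion $E_\gamma(t)\leq\mathcal{E}(r)$.
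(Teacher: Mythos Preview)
Your proof is correct and follows essentially the same route the paper intends: apply Proposition \ref{prop:IE}, extract from the proof of the preceding lemma the pointwise monotonicity $\mathcal{E}(|\gamma(t)|,\alpha(t))\leq\mathcal{E}(|\gamma(t)|,\pi/2)=\mathcal{E}(|\gamma(t)|)$, and then use the elementary fact that $\mathcal{E}(h)$ is strictly decreasing on $[\sqrt2,\infty)$ with maximum $2$ at $\sqrt2$. Your observation that the needed pointwise bound must be taken from the \emph{proof} of the lemma rather than its stated conclusion is exactly right, and the paper leaves the corollary unproved precisely because this chain of implications is immediate once the lemma is in hand.
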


\section{Spirals with Maximum Efficiency}\label{sec:leq-sqrt2}
Here we refine the variational method employed in Section \ref{sec:inequality} to show that the efficiency of any spiral  assumes its maximum value  only when it has constant height $\sqrt{2}$.  We start by considering one edge  spirals $P=(p_0,p_1)$.
By a \emph{lifting} of $P$ we mean any polygonal curve $\tilde P=(\tilde p_0 , p_1)$ 
where $\tilde p_0=\lambda p_0$ for $\lambda> 1$.

\begin{lem}\label{lem:lifting}
Let $P=(p_0 , p_1)$ be a  spiral. For any  lifting $\tilde P$ of $P$,
$
H(P)<H(\tilde P).
$
\end{lem}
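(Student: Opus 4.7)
The plan is to continuously lift $p_0$: introduce the one-parameter family $P^\mu := (\mu p_0, p_1)$ for $\mu \in [1,\lambda]$ with $P^1 = P$ and $P^\lambda = \tilde P$, and show that $\mu \mapsto H(P^\mu)$ has strictly positive derivative on $(1,\lambda]$; integrating from $\mu = 1$ to $\lambda$ then yields $H(\tilde P) > H(P)$. Throughout I would use the identity $H(P^\mu) = A(D_0^\mu \triangle D_1)$ from Section \ref{sec:horizon}, where $D_0^\mu := \{q \in \S^2 : \mu p_0 \cdot q \geq 1\}$ is the inspection disk of $\mu p_0$, $D_1$ is the inspection disk of $p_1$, and $\triangle$ denotes symmetric difference. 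The boundary circle $H_0^\mu := \partial D_0^\mu$ lies in the plane $\{q : p_0 \cdot q = 1/\mu\}$, and the family $\{D_0^\mu\}_\mu$ is rotationally symmetric about the axis $\hat p_0 := p_0/|p_0|$.

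The key observation is that as $\mu$ grows, the annular increment $D_0^{\mu+d\mu}\setminus D_0^\mu$ has \emph{constant} infinitesimal width around $H_0^\mu$ by rotational symmetry. Therefore, if $f(\mu)$ denotes the arc-length fraction of $H_0^\mu$ lying inside $D_1$, then
\[
\frac{d}{d\mu} A(D_0^\mu \triangle D_1) = \frac{d}{d\mu} A(D_0^\mu)\cdot\big(1 - 2f(\mu)\big),
\]
so strict monotonicity reduces to showing $f(\mu) < 1/2$ for every $\mu > 1$.

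To estimate $f(\mu)$, set $h_0 := |p_0|$ and $\ell := |p_1 - p_0|$, and use the spiral orthogonality $p_0 \cdot (p_1 - p_0) = 0$, which gives $p_0 \cdot p_1 = h_0^2$. Choose the orthonormal pair $e_1 := (p_1 - p_0)/\ell$ and $e_2 \perp \{p_0, p_1\}$ in $p_0^\perp$, and parameterize
\[
q(\theta) := \tfrac{1}{\mu h_0}\hat p_0 + \sqrt{1 - 1/(\mu h_0)^2}\,\big(\cos\theta\, e_1 + \sin\theta\, e_2\big) \in H_0^\mu.
\]
Using $p_1 \cdot \hat p_0 = h_0$, $p_1 \cdot e_1 = \ell$, and $p_1 \cdot e_2 = 0$, a direct computation yields
\[
p_1 \cdot q(\theta) = \tfrac{1}{\mu} + \ell\sqrt{1 - 1/(\mu h_0)^2}\,\cos\theta,
\]
so $q(\theta) \in D_1$ iff $\cos\theta \geq C(\mu) := (1 - 1/\mu)/(\ell\sqrt{1 - 1/(\mu h_0)^2})$. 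For $\mu > 1$ one has $C(\mu) > 0$ (and $f(\mu) = 0$ when $C(\mu) \geq 1$); hence $\{\theta : \cos\theta \geq C(\mu)\}$ has measure strictly less than $\pi$, giving $f(\mu) < 1/2$, as required.

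The main subtlety I foresee is the infinitesimal identity in the second paragraph. Rigorously, by the first variation for areas of moving regions,
\[
\tfrac{d}{d\mu}A(D_0^\mu \cap D_1) = v(\mu)\cdot L(H_0^\mu \cap D_1),\qquad \tfrac{d}{d\mu}A(D_0^\mu) = v(\mu)\cdot L(H_0^\mu),
\]
where $v(\mu) > 0$ is the normal speed of $H_0^\mu$, which is constant around the circle by rotational symmetry of $\{D_0^\mu\}$ about $\hat p_0$. Thus the ratio $f(\mu)$ genuinely measures the arc-length fraction of $H_0^\mu$ in $D_1$. The remaining step—the clean sign $C(\mu) > 0 \iff \mu > 1$—rests precisely on the spiral orthogonality $p_0 \perp (p_1 - p_0)$, which reduces the numerator of $C(\mu)$ to $1 - 1/\mu$.
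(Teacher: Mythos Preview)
Your proof is correct and is essentially a rigorous, analytic formulation of the paper's geometric sketch. The paper observes that, by the spiral orthogonality, the horizon circle $H(p_1)$ bisects $H(p_0)$ (your $f(1)=\tfrac12$) and then asserts that ``the area gained exceeds the area lost'' as $p_0$ is lifted; your first-variation identity $\tfrac{d}{d\mu}A(D_0^\mu\triangle D_1)=\tfrac{d}{d\mu}A(D_0^\mu)\cdot(1-2f(\mu))$ together with the explicit computation $C(\mu)>0$ for $\mu>1$ (hence $f(\mu)<\tfrac12$) supplies precisely the justification the paper leaves to the picture.
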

\begin{proof}
Suppose  $\tilde P:=(\tilde p_0, p_1)$, see Figure \ref{fig:cones}. 
\begin{figure}[h]
\centering
\begin{overpic}[height=1.75in]{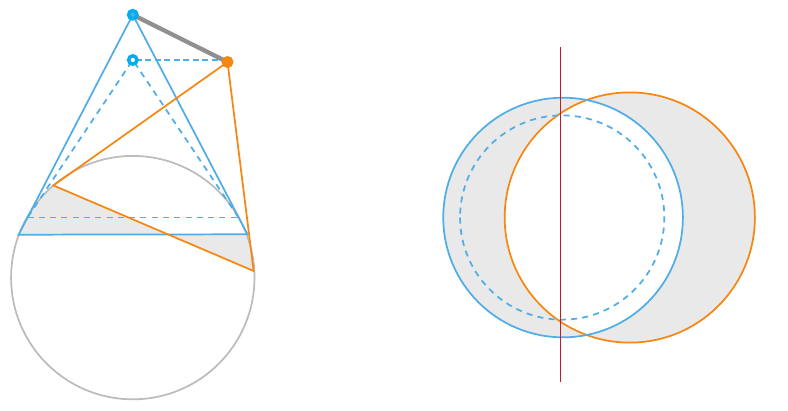}
\put(11.5,44){\small$p_0$}
\put(30.5,43.5){\small$p_1$}
\put(13,51.5){\small$\tilde p_0$}
\put(96,17){\small$H(p_1)$}
\put(45,17){\small$H(\tilde p_0)$}
\put(73,25){\small$H(p_0)$}
\end{overpic}
\caption{}\label{fig:cones}
\end{figure}
Since $p_0p_1$ is orthogonal to $p_0$, the horizon circle $H(p_1)$ (depicted in orange) bisects the horizon circle $H(p_0)$ (depicted in dotted blue line), because the two planes which contain $p_0p_1$ and are tangent to $\S^2$ intersect  $H(p_0)$ at a pair of its antipodal points. Thus the area that is gained by the horizon, as $p_0$ rises to $\tilde p_0$ exceeds the area which is lost.
\end{proof}

Let $P=(p_0,p_1)$ be a spiral and set $r:=|p_0|$, $R:=|p_1|$. For every $h\in[r,R]$, let 
$
\tilde P^h:=(\tilde p_0^{\,h},p_1)
$ 
be the lifting of $P$ such that the distance of $\tilde p_0^{\,h}p_1$ to $o$ is equal to $h$. Let $q^h$ be the closest point of  $\tilde p_0^{\,h}p_1$ to $o$; see Figure \ref{fig:lift-split}. 
\begin{figure}[h]
\begin{overpic}[height=1.2in]{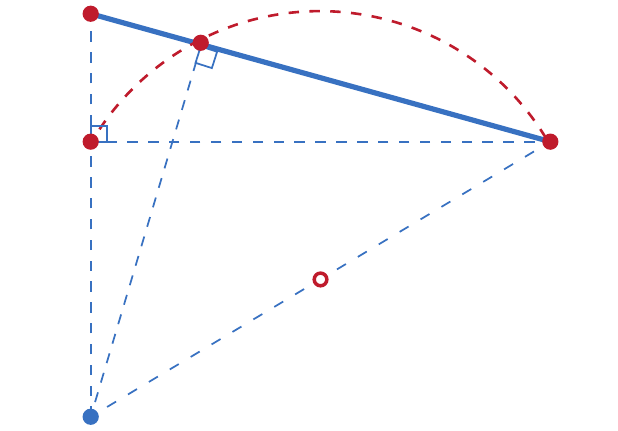}
\put(6,45){\small$p_0$}
\put(92,45){\small$p_1$}
\put(5.5,68){\small$\tilde p_0^{\,h}$}
\put(31,67){\small$q^h$}
\put(9,-1){\small$o$}
\end{overpic}
\caption{}\label{fig:lift-split}
\end{figure} 
Set
$
\tilde P^h_+:=(q^h,p_1)\;\text{and}\; \tilde P^h_-:=(\tilde p_0^{\,h},q^h).
$ 
\begin{lem}\label{lem:wt}
Let $P=(p_0,p_1)$ be a spiral with  initial height $r$, and final height $R$. Then for every $r\leq\rho\leq R$,
$$
H(P) \le  \int_{r}^{\rho} w(h) \mathcal{E}(h) dh + H(\tilde P^\rho_+),
$$
where  $\int_r^\rho w(h)dh=L(P)-L(\tilde P^\rho_+)$, and $w\geq r/\sqrt{R^2-r^2}$.
\end{lem}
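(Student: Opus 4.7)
The plan is to establish the lemma as an equality by choosing $w(h) := h/\sqrt{R^2-h^2}$ and applying the fundamental theorem of calculus. Since $P = (p_0, p_1)$ is a spiral, $p_0 \perp p_0 p_1$, so by the Pythagorean theorem $L(P) = \sqrt{R^2 - r^2}$. Similarly, as $q^h$ is the foot of the perpendicular from $o$ to the line $\tilde p_0^h p_1$, we have $q^h \perp q^h p_1$, so $L(\tilde P^h_+) = \sqrt{R^2 - h^2}$. Thus $w(h) = -\frac{d}{dh} L(\tilde P^h_+)$, and the fundamental theorem gives $\int_r^\rho w(h)\, dh = L(P) - L(\tilde P^\rho_+)$. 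Since $w$ is increasing on $[r, R]$, we also have $w(h) \geq w(r) = r/\sqrt{R^2 - r^2}$. Both conditions on $w$ stated in the lemma are therefore satisfied.

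The crux of the argument is then the pointwise identity
\[
-\frac{d}{dh}\, H(\tilde P^h_+) \;=\; w(h)\, \mathcal{E}(h), \qquad h \in [r, R],
\]
where $\mathcal{E}(h) = 4\sqrt{h^2-1}/h^2$. Granting this identity, integration from $r$ to $\rho$ yields $H(\tilde P^r_+) - H(\tilde P^\rho_+) = \int_r^\rho w(h)\,\mathcal{E}(h)\, dh$. But $\tilde P^r_+ = P$ (at $h = r$ the lifting is trivial and $q^r = p_0$), so this rearranges to
\[
H(P) \;=\; H(\tilde P^\rho_+) + \int_r^\rho w(h)\, \mathcal{E}(h)\, dh,
\]
which proves the lemma (in fact as an equality, so $\leq$ holds).

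To verify the identity, observe that $H(\tilde P^h_+) = H(h, R)$: the horizon of a one-edge spiral depends only on its initial and final heights, which here are $h$ and $R$. Differentiating the closed-form expression for $H(h_0, h_1)$ recorded in Section \ref{sec:horizon} with respect to $h_0$ and simplifying yields $\partial_{h_0} H(h_0, h_1) = -4\sqrt{h_0^2 - 1}/(h_0 \sqrt{h_1^2 - h_0^2})$, which at $h_1 = R$ is precisely $-w(h_0)\, \mathcal{E}(h_0)$. This algebraic step is the main obstacle: the formula for $H(h_0, h_1)$ involves two inverse trigonometric functions of compound expressions, so the derivative is dense but mechanical and can be checked via the symbolic package \cite{ghomi-wenk:Mathematica2}. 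An alternative route uses Proposition \ref{prop:IE} with the arclength parameterization $\gamma_h(t) = q^h + t v^h$, so that $|\gamma_h(t)| = \sqrt{h^2 + t^2}$ and $\alpha_h(t) = \arccos(t/\sqrt{h^2 + t^2})$; differentiating $\int_0^{\sqrt{R^2 - h^2}} \mathcal{E}(|\gamma_h|, \alpha_h)\, dt$ in $h$ via the Leibniz rule trades one dense computation for another of comparable weight. Notably, Lemma \ref{lem:lifting} is not directly needed for this particular argument; the inequality formulation of the conclusion seems designed to accommodate later applications in which the chosen $w$ may be replaced by a different admissible weight.
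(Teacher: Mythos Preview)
Your argument is correct and in fact proves the lemma with equality: the identity $\partial_{h_0} H(h_0,h_1)=-4\sqrt{h_0^2-1}\big/\big(h_0\sqrt{h_1^2-h_0^2}\,\big)$ holds (it is essentially the computation \eqref{eq:H'0} carried out in the proof of Lemma~\ref{lem:inequality}), and the fundamental theorem of calculus then gives $H(P)=H(\tilde P^\rho_+)+\int_r^\rho w(h)\mathcal{E}(h)\,dh$ directly.

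This is a genuinely different route from the paper's. The paper discretizes $[r,\rho]$ into $n$ steps, applies the geometric lifting inequality of Lemma~\ref{lem:lifting} at each step to obtain a telescoping bound $H(P)\le\sum_{i=1}^n H(P^i_-)+H(\tilde P^\rho_+)$, controls each $H(P^i_-)$ by a first-order Taylor estimate, and passes to the limit $n\to\infty$ to recover the integral. Your approach bypasses both Lemma~\ref{lem:lifting} and the discretization by differentiating the explicit closed form $H(h,R)$; it is shorter, yields the sharper statement (equality), and makes transparent why the particular weight $w(h)=h/\sqrt{R^2-h^2}$ arises. The paper's approach, on the other hand, is more geometric and makes the role of the lifting operation visible; it also furnishes the template (splitting at the nearest point and lifting) that is reused in the induction of Lemma~\ref{lem:leqSqrt2}, where one no longer has a single closed formula to differentiate. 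Your closing remark that Lemma~\ref{lem:lifting} is not needed here is accurate.
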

\begin{proof}
 If  the desired inequality holds for all $r>1$, then it also holds for $r=1$ by continuity. So we may  assume that $r>1$.
We claim that
$$
w(h):=-\frac{d}{ds} L(\tilde P^s_+)\Big|_{s=h} =-\frac{d}{ds} \sqrt{R^2-s^2}\Big|_{s=h}=\frac{h}{\sqrt{R^2-h^2}}
$$
is the desired weight function. Clearly $\int_r^\rho w(h)dh=L(P)-L(\tilde P^\rho_+)$ and $w\geq r/\sqrt{R^2-r^2}$. Set
$
\Delta h:=(\rho-r)/n, \; h_i:=r+i\Delta h,\; \text{and} \;q^i:=q^{h_i}.
$
We define a sequence of liftings  as follows. Set $P^0:=P$. Once $P^i$ is defined, let $\tilde p_0^i$ be its initial point, $q^i$ be its closest point to $o$, and  set
$P^{i}_-:=(\tilde p_0^{\,i},q^i),$
$P^{i}_+:=(q^i,p_1)$. Then we define
$
P^{i+1}:=\tilde{(P^i_+)}^{h_{i+1}};
$
see Figure \ref{fig:steps}. 
 \begin{figure}[h]
\begin{overpic}[height=1.5in]{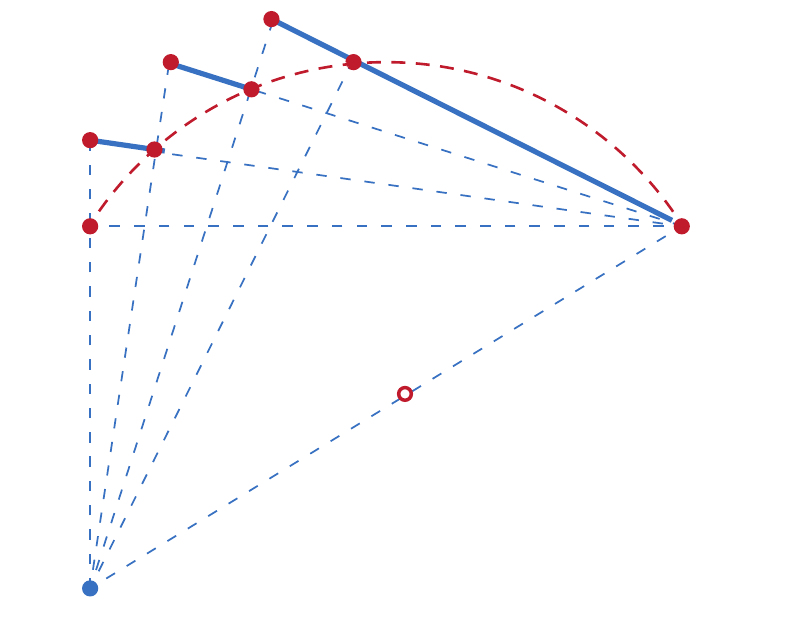}
\put(5,1){\small$o$}
\put(2,50){\small$p_0$}
\put(89,50){\small$p_1$}
\end{overpic}
\caption{}\label{fig:steps}
\end{figure}
By Lemma \ref{lem:lifting}  $H(P^i_+)<H(P^{i+1})=H( P^{i+1}_-)+H(P^{i+1}_+).
$
Applying this inequality iteratively yields
\begin{eqnarray*}
H(P) 
\leq  \sum_{i=1}^{n} H(P^i_-)+H(\tilde P^\rho_+).\\
\end{eqnarray*}
Now, for $0\leq s\leq  \Delta h$, let $q^i(s):= q^{h_{i-1}+s}$; see Figure \ref{fig:triangle}.
 \begin{figure}[h]
\begin{overpic}[height=0.6in]{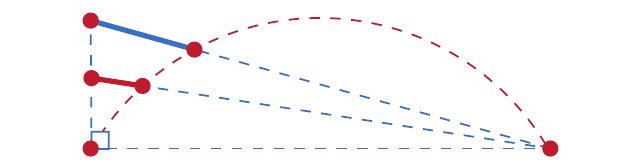}
\put(9,-3.5){\small$q^{i-1}$}
\put(35,17){\small$q^{i}$}
\put(7,24){\small$\tilde p_0^{\,i}$}
\put(0,12.25){\small$x^i(s)$}
\put(25,9.5){\small$q^i(s)$}
\put(91.5,0){\small$p_1$}
\end{overpic}
\caption{}\label{fig:triangle}
\end{figure}
Furthermore let $x^i(s)$ be the point where the line passing through  $q^i(s)$ and $p_1$ intersects  $q^{i-1}\tilde p_0^{\,i}$. Set $\sigma^i_s:=x^i(s)q^i(s)$. 
Let $f_i(s):=H(\sigma^i_s)$. 
By \eqref{eq:h0-ell},
$
f_i(s)=H\big(h_{i-1}+s,L(\sigma_i(s))\big).
$
So $f_i$  is $\C^\infty$ on $[0,\Delta h]$  provided that $h_{i-1}+s>1$ or $h_{i-1}>1$, which is  the case since $r>1$.
We have
$
H( P^i_-)=f_i(\Delta h)-f_i(0)\leq f_i'(0)\Delta h +C_i (\Delta h)^2,
$
where $C_i:=\sup_{[0,\Delta h]} f_i''(s)/2<\infty$. Note that $C_i$ depends continuously on $q^i$. So $C_i$ are bounded above by some constant $C$, independent of $i$, which yields
$$
H( P^i_-)\leq f_i'(0)\Delta h +C (\Delta h)^2= f_i'(0)\Delta h +C \frac{(\rho-r)^2}{n^2}.
$$
Next we compute that
$$
f_i'(0)=\frac{d}{ds}L( \sigma^i_s) \Big|_{s=0} E( \sigma^i_0)+L( \sigma^i_0) \frac{d}{ds}E( \sigma^i_s)\Big|_{s=0}
=\frac{d}{ds}L( \sigma^i_s) \Big|_{s=0} \mathcal{E}(h_{i-1}).
$$
If we let $\tau^i_s:=q^i(s)p_1$ then at $s=0$,
$
\frac{d}{ds}L( \sigma^i_s)+\frac{d}{ds}L(\tau^i_s) =\frac{d}{ds} |x^i(s)p_1|=0,
$
because $x^i(0)p_1=q^{i-1}p_1$ is orthogonal to $q^{i-1}\tilde p_0^{\,i}$. Now recall that $q^h p_1=\tilde P^h_+$. Thus
$$
\frac{d}{ds}L( \sigma^i_s) \Big|_{s=0}=-\frac{d}{ds}L(\tau^i_s) \Big|_{s=0}
=-\frac{d}{ds}L(\tilde P^{h_{i-1}+s}_+) \Big|_{s=0}
=w(h_{i-1}).
$$
The last four displayed expressions yield
$$
\sum_{i=1}^{n} H(P^i_-)\leq\sum_{i=0}^{n-1}\left( w(h_i)\mathcal{E}(h_i)\Delta h +C\frac{(\rho-r)^2}{n^2}\right)
=\sum_{i=0}^{n-1}w(h_i)\mathcal{E}(h_i)\Delta h +C\frac{(\rho-r)^2}{n}.
$$
Letting $n\to\infty$ 
 completes the proof.
\end{proof}

The last lemma via an induction yields:

\begin{lem}\label{lem:leqSqrt2}
Let $P$ be a polygonal spiral with  initial height $r$, and final height $R$. Then 
\be\label{eq:leqSqrt2}
H(P) \le \int_{r}^{R} w(h) \mathcal{E}(h) dh,
\ee
where $\int_r^R w(h)dh=L(P)$, and $w\geq r/\sqrt{R^2-r^2}$.
\end{lem}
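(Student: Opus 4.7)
The plan is to induct on the number $m$ of edges of $P$. The base case $m = 1$ is an immediate consequence of Lemma~\ref{lem:wt}: taking $\rho = R$, the closest point $q^R$ of the lifted segment to $o$ coincides with $p_1$, so the residual $\tilde P^R_+ = \{p_1\}$ contributes zero to both $L$ and $H$, and Lemma~\ref{lem:wt} collapses to exactly the desired bound.

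For the inductive step with $m \geq 2$, decompose $P$ into its first edge $P_1 = (p_0, p_1)$, which is itself a one-edge spiral, and the tail $P' = (p_1, \dots, p_m)$; set $R_1 := |p_1|$. Applying Lemma~\ref{lem:wt} to $P_1$ with $\rho = R_1$ produces a weight $w_1$ on $[r, R_1]$ with $\int_r^{R_1} w_1\,dh = L(P_1)$, $w_1 \geq r/\sqrt{R_1^2 - r^2}$, and $H(P_1) \leq \int_r^{R_1} w_1\,\mathcal{E}\,dh$. The inductive hypothesis applied to $P'$ then produces a weight $w'$ on $[R_1, R]$ with $\int_{R_1}^R w'\,dh = L(P')$, $w' \geq R_1/\sqrt{R^2 - R_1^2}$, and $H(P') \leq \int_{R_1}^R w'\,\mathcal{E}\,dh$. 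Concatenating $w := w_1$ on $[r, R_1]$ and $w := w'$ on $(R_1, R]$ and using additivity of $L$ and $H$ yields $\int_r^R w\,dh = L(P)$ and $H(P) \leq \int_r^R w\,\mathcal{E}\,dh$. The uniform lower bound $w \geq r/\sqrt{R^2 - r^2}$ is immediate on $[r, R_1]$ since $R_1 \leq R$, and on $(R_1, R]$ it reduces, after squaring and cross-multiplying, to $R_1^2 \geq r^2$, which holds since $R_1 \geq r$.

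The subtlety is that the tail $P'$ need not be a polygonal spiral in the strict sense of Section~\ref{sec:decomposition}: its initial edge $p_1 p_2$ is only guaranteed to satisfy $\angle o p_1 p_2 \geq \pi/2$ (monotonicity of height along each edge of $P$) rather than exact orthogonality. To apply induction cleanly I would formulate the inductive statement for the slightly broader class of polygonal curves that are locally convex with respect to $o$, have nondecreasing height starting at height at least $1$, and whose initial vertex is the closest point of the initial edge to $o$; polygonal spirals and their tails both lie in this class. The corresponding extension of Lemma~\ref{lem:wt} is the main obstacle: strict orthogonality of the first edge is used in the proof of that lemma both to guarantee that $p_0$ is the closest point of $p_0 p_1$ to $o$ and in the derivative computation~\eqref{eq:H'0} driving the telescoping argument. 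One must verify that this computation (or an appropriate one-sided analogue bounding it from the correct side) survives when the initial edge is not orthogonal but merely has the initial vertex as its closest point to $o$; once this is established, the iterative residuals $P^i_+$ arising from the splitting are automatically orthogonal at their new initial vertex $q^i$, and the rest of the argument from Lemma~\ref{lem:wt} carries over unchanged.
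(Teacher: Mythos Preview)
Your induction scheme is essentially the same as the paper's, and you have correctly identified the real difficulty: the tail $(p_1,\dots,p_m)$ is not a polygonal spiral because the edge $p_1p_2$ need not be orthogonal to $op_1$. Your proposed fix, however---strengthening the inductive hypothesis to a broader class and extending Lemma~\ref{lem:wt} to non-orthogonal initial edges---is left as an obstacle you do not resolve. Since the orthogonality $\alpha(a)=\pi/2$ is used in Lemma~\ref{lem:lifting} (the horizon circle $H(p_1)$ bisects $H(p_0)$ precisely because $p_0p_1\perp op_0$) and in the derivative computation \eqref{eq:H'0}, this extension is not a formality, and as written your argument has a genuine gap.

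The paper sidesteps the issue entirely by exploiting the freedom in the parameter $\rho$ of Lemma~\ref{lem:wt}. Rather than taking $\rho=|p_1|$ on the first edge, take $\rho$ equal to the distance from $o$ to the \emph{line} through $p_1p_2$, and let $q$ be the foot of the perpendicular from $o$ to that line. Local convexity with respect to $o$ gives $r\le\rho\le|p_1|$, so this $\rho$ is admissible in Lemma~\ref{lem:wt}, and the residual term is $H(\tilde P^\rho_+)=H\big((q,p_1)\big)$. Since $q$, $p_1$, $p_2$ are collinear with $p_1$ between $q$ and $p_2$, one has
\[
H\big((q,p_1)\big)+H\big((p_1,\dots,p_{m})\big)=H\big((q,p_2,\dots,p_{m})\big)=:H(P'),
\]
and likewise for lengths. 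The curve $P'$ has one fewer edge and \emph{is} a genuine polygonal spiral, because by construction $qp_2\perp oq$ and $|q|=\rho\ge r\ge 1$. Now the inductive hypothesis applies directly to $P'$, and concatenating the weight $w_0$ on $[r,\rho]$ from Lemma~\ref{lem:wt} with the weight $w_1$ on $[\rho,R]$ from the inductive step finishes the proof exactly as you outlined. No extension of Lemma~\ref{lem:wt} is needed.
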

\begin{proof}
If $P$ has only one edge \eqref{eq:leqSqrt2} holds by Lemma \ref{lem:wt}. Suppose  that \eqref{eq:leqSqrt2} holds for spirals with $n$ edges and let $P=(p_0,\dots, p_{n+1})$.  Let $\rho$ be the distance of the line spanned by $p_1p_2$ from $o$ and $q$ be the closest point of that line to the origin. Then $P':=(q, p_2,\dots, p_m)$  is a spiral with $n$ edges. 
Note that
$
H(P)=H\big((p_0,p_1)\big)+H\big((p_1,\dots, p_{n+1})\big),
$
and by Lemma \ref{lem:wt}, 
$
H\big((p_0,p_1)\big)\leq \int_r^\rho w_0(h)\mathcal{E}(h)dh +H\big((q,p_1)).
$
Thus
$$
H(P)\leq \int_r^{\rho} w_0(h)\mathcal{E}(h)dh +H(P'),
$$
where $\int_r^\rho w_0(h)dh=L(p_0p_1)-L(q p_1)=L(P)-L(P')$, and $w_0\geq r/\sqrt{|p_1|^2-r^2}\geq r/\sqrt{R^2-r^2}$.
By the inductive hypothesis
$$
H(P')\leq \int_{\rho}^R w_1(h)\mathcal{E}(h)dh,
$$
where $\int_r^\rho w_1(h)dh=L(P')$, and $w_1\geq \rho/\sqrt{R^2-\rho^2}\geq r/\sqrt{R^2-r^2}$.
Set $w:=w_0$ for $h<\rho$ and  $w:=w_1$ for $h\geq\rho$. Then the last two displayed inequalities yield 
\eqref{eq:leqSqrt2} .
\end{proof}

Now we prove the main result of this section, which extends Proposition \ref{prop:inequality}:

\begin{prop}\label{prop:leqSqrt2}
For any spiral $\gamma$, $E(\gamma)\leq 2$ with equality only if $|\gamma|\equiv\sqrt{2}$.
\end{prop}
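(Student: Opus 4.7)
The plan is to refine the polygonal approximation argument used for Proposition~\ref{prop:inequality} by exploiting Lemma~\ref{lem:leqSqrt2} more carefully. Recall that $\mathcal{E}(h)=4\sqrt{h^2-1}/h^2\leq 2$ with equality only at $h=\sqrt{2}$. For any polygonal spiral $P$ with initial height $r$ and final height $R>r$, Lemma~\ref{lem:leqSqrt2} gives
$$
H(P)\leq\int_r^R w(h)\mathcal{E}(h)\,dh \;=\; 2L(P)\;-\;\int_r^R w(h)\bigl(2-\mathcal{E}(h)\bigr)\,dh,
$$
and combining with $w\geq r/\sqrt{R^2-r^2}$ yields the sharpened bound
$$
E(P)\;\leq\; 2\;-\;\frac{r}{L(P)\sqrt{R^2-r^2}}\int_r^R\bigl(2-\mathcal{E}(h)\bigr)\,dh.
$$
In particular $E(P)\leq 2$, and since $E$ is continuous on the space of spirals (Lemma~\ref{lem:continuous}), the polygonal approximation of Proposition~\ref{prop:inequality} recovers $E(\gamma)\leq 2$ for any spiral $\gamma$.

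For the characterization of equality, I split into three subcases. If $L(\gamma)=0$, then $\gamma$ is a point and $E(\gamma)=\mathcal{E}(|\gamma|)$ by \eqref{eq:LP0}, so $E(\gamma)=2$ forces $|\gamma|\equiv\sqrt{2}$. If $L(\gamma)>0$ with $|\gamma|\equiv r$ constant, then $\alpha\equiv\pi/2$ almost everywhere (since $|\gamma|'\equiv 0$), and Proposition~\ref{prop:H} yields $E(\gamma)=\mathcal{E}(r)$, which equals $2$ only when $r=\sqrt{2}$. The crucial remaining case is $L(\gamma)>0$ with $r:=|\gamma(a)|<|\gamma(b)|=:R$, which I will rule out under the assumption $E(\gamma)=2$.

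To handle this last case, approximate $\gamma$ by polygonal spirals $P_k\to\gamma$ with $r_k\to r$, $R_k\to R$, and $L(P_k)\to L(\gamma)$, just as in the proof of Proposition~\ref{prop:inequality}. Applying the sharpened bound above to each $P_k$ and passing to the limit gives
$$
2\;=\;\lim_{k\to\infty}E(P_k)\;\leq\; 2\;-\;\frac{r}{L(\gamma)\sqrt{R^2-r^2}}\int_r^R\bigl(2-\mathcal{E}(h)\bigr)\,dh,
$$
so $\int_r^R\bigl(2-\mathcal{E}(h)\bigr)dh\leq 0$. But $2-\mathcal{E}(h)\geq 0$ with equality only at $h=\sqrt{2}$, and the hypothesis $r<R$ makes the integral strictly positive, a contradiction. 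Hence equality $E(\gamma)=2$ forces $|\gamma|\equiv\sqrt{2}$, as desired. The main obstacle in the plan is technical rather than conceptual: one must ensure that the approximants $P_k$ are genuine polygonal spirals (in particular with first edge orthogonal to $p_0$) while simultaneously arranging $r_k\to r$, $R_k\to R$, and $L(P_k)\to L(\gamma)$. This is essentially the same issue already resolved in Proposition~\ref{prop:inequality}, so I expect to adapt the same construction here.
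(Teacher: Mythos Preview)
Your argument is correct and uses the same ingredients as the paper (Lemma~\ref{lem:leqSqrt2}, the lower bound $w\geq r/\sqrt{R^2-r^2}$, continuity of $E$, and polygonal approximation), but your execution of the contradiction in the case $r<R$ is more direct than the paper's. The paper argues as follows: for each $\epsilon>0$ pick $P_i$ with $E(P_i)\geq 2-\epsilon$, deduce that the normalized weight $\ol w_i$ must place all but $\sqrt{\epsilon}$ of its mass on the sublevel set $\{h:\mathcal{E}(h)\geq 2-\sqrt{\epsilon}\}=[x_\epsilon^-,x_\epsilon^+]$, and then use the pointwise lower bound on $\ol w_i$ to conclude $R-r\leq \sqrt{\epsilon}\,L(P_i)\sqrt{(R/r)^2-1}+(x_\epsilon^+-x_\epsilon^-)\to 0$. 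You bypass this localization entirely by rewriting the conclusion of Lemma~\ref{lem:leqSqrt2} as the quantitative deficit
\[
2-E(P_k)\;\geq\;\frac{r_k}{L(P_k)\sqrt{R_k^2-r_k^2}}\int_{r_k}^{R_k}\bigl(2-\mathcal{E}(h)\bigr)\,dh
\]
and passing to the limit; since the integrand is continuous, nonnegative, and vanishes only at $\sqrt{2}$, the right side has a strictly positive limit when $r<R$, contradicting $E(P_k)\to 2$. This is a genuine simplification. Note that for the limit you only need $L(P_k)$ bounded above (not $L(P_k)\to L(\gamma)$), which is exactly what the paper invokes at the end of its proof and is automatic for inscribed polygons; the remaining technical point you flag---arranging the approximants to be bona fide polygonal spirals with $r_k\to r$, $R_k\to R$---is indeed the same one already handled in Proposition~\ref{prop:inequality}.
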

\begin{proof}
  Lemma \ref{lem:leqSqrt2} together with Lemma \ref{lem:continuous}  yields $E(\gamma)\leq 2$ via a polygonal approximation. Next suppose that $E(\gamma)=2$. Let $r$, $R$ be the initial and final heights of $\gamma$. If $r=R$, then  $2=E(\gamma)=\mathcal E(|\gamma|)$ which yields $|\gamma|\equiv\sqrt2$. Suppose towards a contradiction that $r<R$.
  Let $P_i$, $i=1$, $2$, $\dots$ be a sequence of polygonal spirals converging to $\gamma$, with initial and final heights $r_i$, $R_i$. We may assume for convenience that $r\leq r_i<R_i\leq R$. Let $w_i$ be the weight functions for $P_i$ given by Lemma \ref{lem:leqSqrt2}. Set $\ol w_i:=w_i/L(P_i)$ on $[r_i,R_i]$ and $\ol w_i:=0$ elsewhere. Then $\int_{r}^{R} \ol w_i(h)dh=1$.
  By Lemma  \ref{lem:continuous}, for any given $\epsilon>0$, we may choose $i$ so large that
$
E(P_i)\geq 2-\epsilon.
$
Then by Lemma \ref{lem:leqSqrt2},
$$
2-\epsilon\leq E(P_i)\leq \int_{r}^{R} \ol w_i(h)\mathcal{E}(h)  dh\leq \sup_{[r,R]}\mathcal{E}\leq 2.
$$
So $\sup_{[r,R]}\mathcal{E}=2$. 
Since $\mathcal{E}=2$ only at $\sqrt{2}$,  $[r,R]\ni\sqrt2$. So the set of heights  $h\in[r,R]$ with $\mathcal{E}(h)\geq 2-\sqrt{\epsilon}$ forms a subinterval $[x_\epsilon^-, x_\epsilon^+]$. It follows that
 $$
2-\epsilon\leq\int_{r}^{R}  \ol w_i(h)\mathcal{E}(h)dh 
\le 
   -\sqrt{\epsilon}\left(\int_{r}^{x_\epsilon^-}  \ol w_i(h)dh+\int_{x_\epsilon^+}^{R}  \ol w_i(h)dh  \right)+ 2.
$$
So
$
\int_{r}^{x_\epsilon^-}  \ol w_i(h)dh+\int_{x_\epsilon^+}^{R} \ol w_i(h)dh \le \sqrt\epsilon.
$
But $\ol w_i\geq 1/\big(L(P_i)\sqrt{(R/r)^2-1}\,\big)$. Thus
$$
R-r\leq\sqrt\epsilon L(P_i)\sqrt{(R/r)^2-1}+x_\epsilon^+-x_\epsilon^-.
$$
Letting $\epsilon\to 0$, we obtain $r=R$, since
$
x_\epsilon^\pm \to\sqrt{2},
 $
 and $L(P_i)$ is bounded above.
Hence we arrive at the desired contradiction. 
\end{proof}

\section{Proof of Theorem \ref{thm:main}}\label{sec:proof}

By Proposition \ref{prop:minimizer} there exists a minimal inspection curve $\gamma\colon[a,b]\to\R^3$, which we may assume to have constant speed. As we described in  Section \ref{sec:integral}, to establish  \eqref{eq:main}  it suffices to show that $E(\gamma)\leq 2$. By Proposition \ref{prop:unfolding}, $E(\gamma)=E(\tilde\gamma)$ where  $\tilde\gamma$ is the unfolding of $\gamma$. By Proposition \ref{prop:decomposition}, $\tilde\gamma$ admits a spiral decomposition, generated by a collection of mutually disjoint open sets $U_i\subset[a,b]$, $i\in I$. 
Set $U_0:=[a,b]\setminus \cup_i\ol U_i$, and let $\tilde\gamma_i:=\tilde\gamma|_{\ol U_i}$,  $\tilde\gamma_0:=\tilde\gamma|_{U_0}$. Then
\be\label{eq:E-gamma}
E(\tilde\gamma)=\frac{H(\tilde\gamma)}{L(\tilde\gamma)}=\frac{1}{L(\tilde\gamma)}\sum_iH(\tilde\gamma_i)=
\frac{1}{L(\tilde\gamma)}\left(L(\tilde\gamma_0)E(\tilde\gamma_0)+ \sum_iL(\tilde\gamma_i)E(\tilde\gamma_i)\right),
\ee
where we define $L(\tilde\gamma_0):=\int_{U_0}|\tilde\gamma_0'(t)|dt$. So
$L(\tilde\gamma_0)+ \sum_iL(\tilde\gamma_i)=L(\tilde\gamma)$.
If $L(\tilde\gamma_0)=0$, then we may disregard the first term in the summation above. Otherwise,
by definition of spiral decomposition, $\tilde\alpha(t)=\pi/2$ for almost all $t\in U_0$. Thus, by Proposition \ref{prop:IE},
\be\label{eq:E-gamma-0}
E(\tilde\gamma_0)=\frac{1}{\mu(U_0)}\int_{U_0} \mathcal{E}\left(|\tilde\gamma(t)|,\frac{\pi}{2}\right)dt=\frac{1}{\mu(U_0)}\int_{U_0}\mathcal{E}\big(|\tilde\gamma(t)|\big)dt\leq 2.
\ee
Furthermore, by Proposition  \ref{prop:inequality} or \ref{prop:leqSqrt2},
\be\label{eq:E-gamma-1}
E(\tilde\gamma_i)\leq 2,
\ee
assuming $U_i\neq\emptyset$.
So it follows that $E(\tilde\gamma)\leq 2$, as desired.
It remains to characterize the case of equality in \eqref{eq:main}, which corresponds to $E(\gamma)= 2$. Then $E(\tilde\gamma)=2$, which yields that the terms $E(\tilde\gamma_i)$ and $E(\tilde\gamma_0)$ in \eqref{eq:E-gamma} must all be equal to $2$. But the inequality in \eqref{eq:E-gamma-1} must be strict by Proposition \ref{prop:leqSqrt2}, since $\tilde\gamma_i$ are strict spirals by definition of spiral decomposition. So $\tilde\gamma$ cannot contain any strict spirals or $U_i=\emptyset$, which means that $U_0=[a,b]$ or $\tilde\gamma$ has constant height. Furthermore, equality in \eqref{eq:E-gamma-0} implies that 
$\mathcal{E}(|\tilde\gamma(t)|)\equiv2$ which can happen only when $|\tilde\gamma(t)|\equiv\sqrt2$. So we conclude that   $\gamma$ has constant height $\sqrt{2}$, since unfoldings preserve height. 
Now let $\ol\gamma:=\gamma/\sqrt2$ be the projection of $\gamma$ into $\S^2$. Then $L(\ol\gamma)=L(\gamma)/\sqrt2=4\pi/\sqrt2$. 
Recall that, since $\gamma$ is an inspection curve, the horizon circles generated by points of $\gamma$ cover $\S^2$. Since $|\gamma|\equiv\sqrt{2}$, these circles have (spherical) radius $\pi/4$ and are centered at points of $\ol\gamma$. 
Thus  $\ol\gamma$ satisfies the hypothesis of the following proposition, which will complete the proof of Theorem \ref{thm:main}.

\begin{prop}\label{prop:baseball}
Let $\gamma\colon[a,b]\to\S^2$ be a closed  constant speed curve with $L(\gamma)=4\pi/\sqrt2$. Suppose that the 
distance between any point of $\S^2$ and $\gamma$ is at most $\pi/4$. Then $\gamma$ is a simple $\C^{1,1}$ curve which traces consecutively $4$ semicircles of  length $\pi/\sqrt2$.
\end{prop}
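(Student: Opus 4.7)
My plan is to unpack the hypothesis via a Crofton-type identity on $\S^2$, and then use the resulting Blaschke-Santal\'o equality to successively extract simplicity, positive reach, $\C^{1,1}$ regularity, and the four-arc decomposition.

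\textbf{Crofton equality.} Applying the area formula to the horizon parametrization $F(t,\theta)=\cos(\pi/4)\gamma(t)+\sin(\pi/4)(\cos\theta\, T(t)+\sin\theta\, N(t))$, a computation analogous to Proposition \ref{prop:H} gives the Jacobian $|JF(t,\theta)|=\sin(\pi/4)|\cos\theta|\cdot|\gamma'(t)|$, so that for every closed rectifiable curve $\gamma$ on $\S^2$,
\[
 \int_{\S^2}\#\big(\gamma\cap C_{\pi/4}(p)\big)\,dp \;=\; 4L(\gamma)\sin(\pi/4) \;=\; 8\pi,
\]
where $C_{\pi/4}(p)$ denotes the small circle of spherical radius $\pi/4$ centered at $p$. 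The $\pi/4$-density and closedness hypotheses give a pointwise lower bound $\#\big(\gamma\cap C_{\pi/4}(p)\big)\ge 2$: otherwise $\gamma$ would lie entirely in the cap $\ol{B_{\pi/4}(p)}$, forcing the antipode $-p$ to be at distance $\ge 3\pi/4$ from $\gamma$, contradicting $\pi/4$-density; so $\gamma$ must enter and exit $B_{\pi/4}(p)$ and cross its boundary at least twice. Thus the Crofton inequality is saturated and $\#(\gamma\cap C_{\pi/4}(p))=2$ for almost every $p\in\S^2$.

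\textbf{Simplicity and $\C^{1,1}$ regularity.} A self-intersection $\gamma(t_1)=\gamma(t_2)=q$ with $t_1\ne t_2\in[a,b)$ would produce, for $p$ in a positive-measure neighborhood of $C_{\pi/4}(q)$, extra transverse crossings of $C_{\pi/4}(p)$ with both branches of $\gamma$ through $q$, pushing $\#$ above $2$ and contradicting the Crofton equality; hence $\gamma$ is simple. A similar extra-crossing argument rules out a multi-branched tangent cone at any $t$: any such violation produces a positive-measure set of $p$ with $\#>2$, so $T_t\gamma$ is a single line at every $t\in[a,b]$. Combined with simplicity, the $\#=2$ condition implies the normal exponential map $X(t,s)=\cos s\cdot\gamma(t)+\sin s\cdot N(t)$ on $[a,b]\times[-\pi/4,\pi/4]$ is almost-everywhere injective onto $\S^2$, giving $\gamma$ reach at least $\pi/4$. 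With tangent-line cones and positive reach, the characterization of $\C^{1,1}$ submanifolds from \cite{ghomi-howard2014} yields $\gamma\in\C^{1,1}$.

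\textbf{Four semicircles.} Since $\gamma$ is $\C^{1,1}$, its geodesic curvature $\kappa_g$ is defined a.e. The area equality $\operatorname{Area}(T_{\pi/4}(\gamma))=4\pi=2L(\gamma)\sin(\pi/4)$ combined with the nonnegativity of the tube Jacobian $\cos s-\kappa_g(t)\sin s$ on $s\in[-\pi/4,\pi/4]$ forces $|\kappa_g|\le 1$ a.e. The sharper Crofton equality upgrades this to $|\kappa_g|=1$ a.e.: on any subarc where $|\kappa_g|<1$ on a set of positive measure, the boundary curves $\gamma_\pm(t)=X(t,\pm\pi/4)$ are locally non-degenerate and their crossings with neighboring horizon circles push $\#$ above $2$ on a positive-measure set of $p$. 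Finally, the sign-change points of $\kappa_g$ play the role of vertices in a four-vertex-theorem-style count: applying the nested partition technique of \cite{ghomi:rosenberg} yields exactly four maximal arcs on which $\kappa_g\equiv\pm 1$ alternates in sign. Each such arc lies on a small circle of spherical radius $\pi/4$ (since $\cot(\pi/4)=1$), and the total length $4\pi/\sqrt 2$ distributes as four arcs of length $\pi/\sqrt 2$, each a semicircle of its small circle. The main obstacle will be this last step: the upgrade from $|\kappa_g|\le 1$ to $|\kappa_g|=1$ a.e., and the identification of exactly four (rather than any other number of) sign-changes, both require a delicate application of the Ghomi-Rosenberg nested-partition machinery.
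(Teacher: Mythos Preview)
Your Crofton set-up, the simplicity and tangent-cone sketches, and the appeal to \cite{ghomi-howard2014} for $\C^{1,1}$ regularity all track the paper's route, though the paper is more careful: simplicity is deduced \emph{after} first proving that through every point of $\gamma$ there pass two supporting circles of radius $\pi/4$ on opposite sides (double positive support), and this support-circle lemma is itself obtained from the $\#=2$ Crofton equality together with an auxiliary transversality lemma and an ``outside length $\geq\pi$'' lemma. Your direct ``extra crossings'' arguments for simplicity and for line tangent cones are plausible but would need those auxiliary lemmas to be airtight.

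The real gap is in the four-semicircle step. Your curvature route has two holes. First, the upgrade from $|\kappa_g|\le 1$ to $|\kappa_g|=1$ a.e.\ is not established: the tube-area computation gives only $|\kappa_g|\le 1$ (equivalently, double positive support), and your sentence about the boundary curves $\gamma_\pm$ ``pushing $\#$ above $2$'' when $|\kappa_g|<1$ is not a valid deduction --- regularity of $\gamma_\pm$ does not by itself create excess intersections of $\gamma$ with nearby $C_{\pi/4}(p)$. Second, even granting $|\kappa_g|=1$ a.e., the nested-partition lemma from \cite{ghomi:rosenberg} yields only \emph{at least two} connected elements on each side, and says nothing about those arcs being full semicircles; a priori each maximal constant-sign arc could be a short sub-arc of a radius-$\pi/4$ circle, and then length counting alone does not force exactly four arcs.

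The paper sidesteps curvature entirely. Having established double positive support, it proves a key structural lemma: for any support circle $C$ of radius $\pi/4$, the set $C\cap\gamma$ is either a pair of antipodal points of $C$ or a full semicircle of $C$. This is what forces any ``connected'' element of the nested partition to be a semicircle. Applying the nested-partition lemma to the support circles inside each of the two disks bounded by $\gamma$ then yields at least two semicircles curving each way; since each has length $\pi/\sqrt2 = L(\gamma)/4$, the four semicircles exhaust $\gamma$. To repair your argument you would need an analogue of this antipodal/semicircle dichotomy, which does not follow from $|\kappa_g|=1$ alone.
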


It remains then to establish the above proposition. To this end we need:

\begin{lem}[Crofton-Blaschke-Santalo \cite{santalo:1942}]\label{lem:crofton}
Let $\gamma\colon[a,b]\to\S^2$ be a rectifiable curve, and
for every point $p\in\S^2$, and $0\leq\rho\leq\pi/2$, let $C_\rho(p)\subset\S^2$ denote the circle of  radius $\rho$ centered at $p$. Then
$
L(\gamma)=\frac{1}{4\sin(\rho)}\int_{p\in\S^2} \#\gamma^{-1}\big(C_\rho(p)\big)\,dp.
$
\end{lem}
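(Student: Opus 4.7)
The plan is to realize the right-hand side as the area (with multiplicity) of the image of a natural Lipschitz parameterization of the ``$\rho$-tube'' about $\gamma$, and then apply the area formula, in the same spirit as Proposition \ref{prop:H}. After reparameterizing $\gamma$ by arclength (so $L(\gamma)=b-a$), introduce the Darboux frame $(\gamma(t),\gamma'(t),n(t))$ with $n:=\gamma\times\gamma'$, and define $\Phi_\rho\colon[a,b]\times[0,2\pi]\to\S^2$ by
$$
\Phi_\rho(t,\theta):=\cos(\rho)\,\gamma(t)+\sin(\rho)\bigl(\cos(\theta)\,\gamma'(t)+\sin(\theta)\,n(t)\bigr),
$$
so that $\theta\mapsto\Phi_\rho(t,\theta)$ traces $C_\rho(\gamma(t))$. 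Since geodesic distance on $\S^2$ is symmetric, $p\in C_\rho(\gamma(t))$ if and only if $\gamma(t)\in C_\rho(p)$, and therefore
$$
\#\gamma^{-1}\bigl(C_\rho(p)\bigr)=\#\Phi_\rho^{-1}(p)\quad\text{for almost every }p\in\S^2.
$$

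The key step is computing $J\Phi_\rho$. Assuming first that $\gamma$ is $\C^2$, the spherical Frenet relations $\gamma''=-\gamma+\kappa n$ and $n'=-\kappa\gamma'$ (where $\kappa$ is the geodesic curvature of $\gamma$) allow one to expand $\partial_t\Phi_\rho$ and $\partial_\theta\Phi_\rho$ in the right-handed orthonormal frame $(\gamma,\gamma',n)$. A direct calculation of their cross product produces notable cancellations: the $\kappa$-terms vanish identically, yielding
$$
\partial_t\Phi_\rho\times\partial_\theta\Phi_\rho=\sin(\rho)\cos(\theta)\,\Phi_\rho(t,\theta),\qquad J\Phi_\rho(t,\theta)=\sin(\rho)|\cos(\theta)|.
$$
The disappearance of $\kappa$ in this final expression is the geometric crux of the Crofton-type identity. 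The area formula now yields
$$
\int_{\S^2}\#\gamma^{-1}\bigl(C_\rho(p)\bigr)\,dp=\int_a^b\int_0^{2\pi}\sin(\rho)|\cos(\theta)|\,d\theta\,dt=4\sin(\rho)\,L(\gamma),
$$
which rearranges to the claimed formula.

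The main obstacle is extending from $\C^2$ to general rectifiable curves, since then $\gamma''$ and $\kappa$ need not exist and the Darboux frame is only measurable, so $\Phi_\rho$ is not even Lipschitz. I would bridge this gap by density: approximate $\gamma$ uniformly by inscribed spherical geodesic polygons $\gamma_k$ with $L(\gamma_k)\to L(\gamma)$, apply the $\C^2$ calculation edge-by-edge (where $\kappa\equiv 0$ trivializes the computation) and sum by additivity, obtaining
$$
\int_{\S^2}\#\gamma_k^{-1}\bigl(C_\rho(p)\bigr)\,dp=4\sin(\rho)\,L(\gamma_k).
$$
The passage to the limit is the delicate point: the left-hand side is lower semicontinuous under uniform convergence (as transverse intersections persist), so Fatou's lemma gives $\int\#\gamma^{-1}(C_\rho(p))\,dp\leq 4\sin(\rho)L(\gamma)$; the opposite inequality requires a complementary upper bound, obtained either via circumscribed polygonal approximations or by partitioning $\gamma$ into arcs of small frame oscillation and summing over a refinement. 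This approximation argument is the principal technical step.
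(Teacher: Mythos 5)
The paper does not actually prove this lemma: it states it with a citation to Santal\'o's 1942 paper and uses it as a known result, so there is no in-paper proof to compare against. With that said, your $\C^2$ computation is correct and is the heart of the matter: writing $\Phi_\rho$ in the Darboux frame, the Frenet relations $\gamma''=-\gamma+\kappa n$, $n'=-\kappa\gamma'$ do yield $\partial_t\Phi_\rho\times\partial_\theta\Phi_\rho=\sin(\rho)\cos(\theta)\,\Phi_\rho$ with the $\kappa$-terms cancelling, hence $J\Phi_\rho=\sin(\rho)|\cos(\theta)|$ and the formula follows from the area formula and $\int_0^{2\pi}|\cos\theta|\,d\theta=4$.

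The gap is in the passage from $\C^2$ to rectifiable, and it is larger than you acknowledge. The monotonicity $\#\gamma^{-1}(C_\rho(p))\geq\#P_k^{-1}(C_\rho(p))$ for inscribed geodesic polygons, which powers the ``complementary'' direction of Cauchy--Crofton in the plane or for great circles on $\S^2$, genuinely fails for small circles: a short geodesic edge of $P_k$ can enter and exit the disk bounded by $C_\rho(p)$ so that its endpoints $\gamma(t_i),\gamma(t_{i+1})$ lie on the same side, and then $\gamma|_{[t_i,t_{i+1}]}$ need not cross $C_\rho(p)$ at all. Shortening the mesh does not repair this, since arbitrarily short geodesic chords of $C_\rho(p)$ exist. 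So ``circumscribed polygons'' or ``small frame oscillation'' is not a complete argument. A cleaner route, and the one that matches the paper's own technique in Proposition \ref{prop:H}, is to drop the Darboux frame entirely and parameterize the circles $C_\rho(\gamma(t))$ using a fixed Lipschitz orthonormal frame $(e_1,e_2)$ on $\S^2\setminus\{x\}$ pulled back along $\ol\gamma:=\gamma$: the resulting map $\Phi_\rho(t,\theta)=\cos(\rho)\gamma(t)+\sin(\rho)(\cos\theta\,e_1(\gamma(t))+\sin\theta\,e_2(\gamma(t)))$ is honestly Lipschitz for Lipschitz $\gamma$ (unlike your $\Phi_\rho$, which depends on $\gamma'$ and so is merely measurable), the Lipschitz area formula applies directly, and the Lebesgue differentiation argument from Proposition \ref{prop:H} shows the $\theta$-integrated Jacobian $\int_0^{2\pi}J\Phi_\rho\,d\theta$ is frame-independent, after which your pointwise $\C^2$ computation with the Darboux choice of frame identifies it as $4\sin(\rho)$ a.e. This avoids the approximation step entirely.
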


\noindent
For the rest of this section we assume that $\gamma$ satisfies the hypothesis of the last proposition.
Then, since $L(\gamma)=4\pi/\sqrt2$, applying the last lemma with $\rho=\pi/4$ to  $\gamma$  yields
$$
\underset{p\in\S^2}{\textup{Ave}}\;\#\gamma^{-1}\big(C_{\frac\pi4}(p)\big)
=\frac{1}{4\pi}\int_{p\in\S^2} \#\gamma^{-1}\big(C_{\frac\pi4}(p)\big)dp
=\frac{1}{4\pi}L(\gamma)\,4\,\sin\left(\frac\pi4\right)
=2.
$$
Furthermore, note that $C_{\frac\pi4}(p)$ must intersect $\gamma$ for all $p\in\S^2$, since the distance of $p$ from $\gamma$ cannot be bigger than $\pi/4$ by assumption. So, since $\gamma$ is closed, $\#\gamma^{-1}(C_{\frac\pi4}(p))\geq 2$ for almost all $p\in\S^2$. Now since the average of $\#\gamma^{-1}(C_{\frac\pi4}(p))$ is  $2$, it follows that

\begin{lem}\label{lem:2-points}
For almost every $p\in\S^2$,  
$
 \#\gamma^{-1}\big(C_{\frac\pi4}(p)\big)=2.
$
\end{lem}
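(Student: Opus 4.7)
The plan is essentially to observe that the lemma is an immediate consequence of the two facts already assembled in the paragraph just before its statement, combined via a one-line measure-theoretic argument. In short: the average of the counting function $\#\gamma^{-1}\bigl(C_{\frac\pi4}(p)\bigr)$ over $\S^2$ has been shown to equal exactly $2$, and this function is known to be $\geq 2$ almost everywhere, so it must be $=2$ almost everywhere.

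Let me set $N(p):=\#\gamma^{-1}\bigl(C_{\frac\pi4}(p)\bigr)$ for brevity. The displayed computation preceding the lemma shows, via Lemma \ref{lem:crofton} with $\rho=\pi/4$, that
$$
\int_{\S^2} N(p)\,dp\;=\;4\sin(\pi/4)\,L(\gamma)\;=\;4\cdot\tfrac{\sqrt 2}{2}\cdot\tfrac{4\pi}{\sqrt 2}\;=\;8\pi,
$$
so the average of $N$ over $\S^2$ (area $4\pi$) equals exactly $2$. The text then observes that for \emph{every} $p\in\S^2$ the circle $C_{\frac\pi4}(p)$ meets $\gamma$, because $d(p,\gamma)\leq\pi/4$ by hypothesis and $\gamma$ cannot be contained in the open disk of radius $\pi/4$ about $p$ (else the antipode of $p$ would lie at distance $>\pi/4$ from $\gamma$, violating the hypothesis). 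Since $\gamma$ is closed, the continuous function $t\mapsto d(p,\gamma(t))$ on the circle $[a,b]/(a\sim b)$ attains values both $\le\pi/4$ and $>\pi/4$, and must therefore cross the level $\pi/4$ at least twice; hence $N(p)\geq 2$ for almost every $p$ (the exceptional set being the measure-zero set where $\gamma$ meets $C_{\frac\pi4}(p)$ tangentially).

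To conclude, the nonnegative function $N(p)-2$ satisfies $\int_{\S^2}(N(p)-2)\,dp = 8\pi-2\cdot 4\pi = 0$, which forces $N(p)-2=0$, i.e.\ $N(p)=2$, for almost every $p\in\S^2$. There is no substantive obstacle here: the lemma is a direct bookkeeping consequence of the Crofton identity together with the geometric lower bound already in hand, and the main work of the section lies in what has been done before the statement rather than in its proof.
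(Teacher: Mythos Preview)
Your proposal is correct and follows essentially the same approach as the paper: the lemma is stated as an immediate consequence of the preceding paragraph, which establishes that the average of $\#\gamma^{-1}(C_{\frac\pi4}(p))$ over $\S^2$ equals $2$ while the function is $\geq 2$ almost everywhere. Your added justification via the antipodal point (ensuring $\gamma$ is not entirely contained in the open disk about $p$) usefully fills in a small detail that the paper leaves implicit.
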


By a \emph{side} of a circle $C$ in $\S^2$ we mean either of  the two closed disks in $\S^2$ bounded by $C$. If the radius of $C$ is less than $\pi/2$, then the disk with radius less than $\pi/2$ is called the \emph{inside} of $C$ and the other disk is called the \emph{outside} of $C$. By \emph{strictly inside} or \emph{strictly outside} we mean the interior of inside and interior of outside respectively.

\begin{lem}\label{lem:outside-length}
For any point $p\in\S^2$, the portion of $\gamma$ which lies outside $C_{\frac\pi4}(p)$ has length at least $\pi$.
\end{lem}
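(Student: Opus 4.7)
The plan is to find, within $\gamma_{\text{out}}$ (the portion of $\gamma$ outside $C_{\pi/4}(p)$), a single arc whose length alone exceeds $\pi$; no Crofton-type double count is needed for this particular bound.

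First, I would locate a point of $\gamma$ far from $p$. Applying the covering hypothesis at the antipode $-p\in\S^2$, there exists some $t_0\in[a,b]$ with $d_{\S^2}(-p,\gamma(t_0))\leq\pi/4$, and hence $d_{\S^2}(p,\gamma(t_0))\geq 3\pi/4$.

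Next I would exploit the $1$-Lipschitz ``height'' $f(t):=d_{\S^2}(p,\gamma(t))$. After reparametrizing by arclength (constant speed $1$, which is permissible since the claim concerns only images and arclength), $f$ is the composition of a $1$-Lipschitz curve with the $1$-Lipschitz spherical distance $d_{\S^2}(p,\cdot)$, so
$$L\!\left(\gamma\big|_{[s,t]}\right)=t-s\;\geq\;|f(t)-f(s)|$$
for all $s<t$. The parameter set $\{t:f(t)\geq\pi/4\}$ corresponding to $\gamma_{\text{out}}$ is closed; viewing it on the parameter circle (valid because $\gamma$ is closed), it is a disjoint union of closed arcs, unless $f\geq\pi/4$ everywhere, in which case $L(\gamma_{\text{out}})=4\pi/\sqrt{2}>\pi$ and we are already done.

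Finally, let $[c,d]$ be the maximal closed arc in this parameter circle that contains $t_0$ and on which $f\geq\pi/4$. By maximality and continuity of $f$, $f(c)=f(d)=\pi/4$, while $f(t_0)\geq 3\pi/4$. Applying the $1$-Lipschitz estimate on $[c,t_0]$ and on $[t_0,d]$ and summing gives
$$L\!\left(\gamma\big|_{[c,d]}\right)=d-c\;\geq\;\bigl(f(t_0)-f(c)\bigr)+\bigl(f(t_0)-f(d)\bigr)\;\geq\;\tfrac{\pi}{2}+\tfrac{\pi}{2}=\pi,$$
and since $\gamma|_{[c,d]}\subset\gamma_{\text{out}}$ we conclude $L(\gamma_{\text{out}})\geq\pi$. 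There is no serious obstacle here: the proof reduces to a single triangle-inequality estimate for a $1$-Lipschitz function, once one has used the covering hypothesis at $-p$ to produce the distant point $\gamma(t_0)$ and isolated the excursion containing it.
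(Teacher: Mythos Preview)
Your proof is correct and is essentially the same argument as the paper's: both use the covering hypothesis at $-p$ to locate a point of $\gamma$ at spherical distance $\geq 3\pi/4$ from $p$, and then a triangle-inequality (1-Lipschitz) estimate to see that $\gamma$ must spend at least $\pi/2$ of arclength going out and at least $\pi/2$ coming back across the annulus between $C_{\pi/4}(p)$ and $C_{\pi/4}(-p)$. The paper phrases this as ``two segments connecting $C_{\pi/4}(-p)$ and $C_{\pi/4}(p)$,'' which is exactly your single excursion $[c,d]$ split at $t_0$.
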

\begin{proof}
By assumption, $\gamma$ intersects $C_{\frac\pi4}(-p)$, which has distance $\pi/2$ from $C_{\frac\pi4}(p)$. Furthermore, since $\gamma$ is closed, there must exist at least two segments of $\gamma$ which connect $C_{\frac\pi4}(-p)$ and $C_{\frac\pi4}(p)$.
\end{proof}

For the rest of this section, we will assume that $\gamma$ is reparameterized so that $[a,b]=[0,2\pi]$, and identify $[0,2\pi]$ with the unit circle $\S^1\simeq \R/(2\pi \mathbf{Z})$. Furthermore we fix an orientation on $\S^1$. Then for every pair of distinct points $t$, $s\in\S^1$, we let $[t,s]$ denote the segment in $\S^1$ whose orientation from $t$ to $s$ agrees with the orientation of $\S^1$. 

\begin{lem}\label{lem:tangent-cone}
For every $t\in\S^1$, the tangent cone $T_t\gamma$  is a line.
\end{lem}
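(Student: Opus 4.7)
The plan is to derive a contradiction from Lemma~\ref{lem:2-points} by exhibiting an open subset of $\S^2$ on which $\#\gamma^{-1}(C_{\pi/4}(p))\geq 4$, under the assumption that $T_{t_0}\gamma$ is not a line.

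First, I extract a pair of tangent rays $v^+,v^-\in T_{t_0}\gamma$ with $v^+\neq -v^-$, where $v^+$ arises as the limit of secants $\gamma(t_0)\gamma(s_n)$ with $s_n\to t_0^+$ and $v^-$ arises from $t_n\to t_0^-$. A short case analysis justifies this selection whenever the full tangent cone fails to be a line: if both one-sided tangent cones are singletons, non-opposition of $v^\pm$ is forced by the non-line hypothesis; if either one-sided cone contains two distinct rays $w_1,w_2$, then for any ray $w$ on the opposite side at most one of the pairs $(w_1,w),(w_2,w)$ can be antipodal, so the other yields the desired choice.

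Next, set $u:=(v^++v^-)/|v^++v^-|\in T_{\gamma(t_0)}\S^2$, so that $\langle u,v^\pm\rangle>0$. For $n$ sufficiently large, the $u$-projections $\langle\gamma(s_n)-\gamma(t_0),u\rangle$ and $\langle\gamma(t_n)-\gamma(t_0),u\rangle$ are both strictly positive; let $2\delta$ be the smaller of them and put $p:=\exp_{\gamma(t_0)}\bigl((\pi/4+\delta)u\bigr)$. A first-order expansion of the spherical distance at $\gamma(t_0)$ then shows that $\gamma(t_0)$ lies strictly outside $C_{\pi/4}(p)$ while $\gamma(s_n)$ and $\gamma(t_n)$ lie strictly inside.

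Now I count crossings. Tracing $\gamma$ cyclically through $t_n,t_0,s_n$ in parameter order, the strictly inside/outside/inside pattern forces one exit crossing in $(t_n,t_0)$ and one entry in $(t_0,s_n)$. On the complementary arc from $s_n$ back to $t_n$, the distance hypothesis forces $\gamma$ to enter the inside of $C_{\pi/4}(-p)$, which is disjoint from the inside of $C_{\pi/4}(p)$, contributing at least one further exit and one further entry. The four crossings lie in pairwise disjoint parameter subintervals, so $\#\gamma^{-1}(C_{\pi/4}(p))\geq 4$. Since the inside/outside status of the three reference points $\gamma(t_0),\gamma(s_n),\gamma(t_n)$ is preserved for $p'$ in a small neighborhood of $p$ in $\S^2$, the same lower bound holds there, contradicting Lemma~\ref{lem:2-points}.

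The main delicate step is the case analysis in selecting $v^\pm$, particularly the cusp case $v^+=v^-$ in which $u$ coincides with this common tangent direction; one must verify that the sequences $s_n\to t_0^+$ and $t_n\to t_0^-$ still yield points on the $+u$ side of $\gamma(t_0)$, which is immediate from the definition of the one-sided tangent rays. Once the setup is in place, the crossing count and its stability under small perturbations of $p$ are routine continuity arguments.
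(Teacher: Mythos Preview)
Your argument follows essentially the same route as the paper's: set up a circle $C_{\pi/4}(p)$ with $\gamma(t_0)$ strictly outside and two nearby points $\gamma(t_n),\gamma(s_n)$ (one on each parameter side of $t_0$) strictly inside, and then contradict Lemma~\ref{lem:2-points}. The only difference is packaging: the paper picks such a circle with exactly two intersections and invokes Lemma~\ref{lem:outside-length} to conclude that the outside portion---trapped in the short arc $\gamma([t_n,s_n])$---is too short, whereas you count four crossings directly by observing that the long arc must visit the antipodal disk $C_{\pi/4}(-p)$. That observation \emph{is} the content of Lemma~\ref{lem:outside-length}, so the two arguments are dual to one another rather than genuinely different.

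One technical point deserves attention. Your first-order expansion gives
\[
d(\gamma(s_n),p)\;\leq\;\tfrac{\pi}{4}-\delta+O\big(|\gamma(s_n)-\gamma(t_0)|^2\big),
\]
and you need $\delta$ to dominate the error. But $2\delta$ is the \emph{minimum} of the two $u$-projections, so if $t_n$ happens to be much closer to $t_0$ than $s_n$ is (say $|\gamma(t_n)-\gamma(t_0)|\ll|\gamma(s_n)-\gamma(t_0)|^2$), then $\delta$ can be swamped by the second-order term for $\gamma(s_n)$, and the ``strictly inside'' conclusion fails. The fix is easy---reindex so that $|\gamma(s_n)-\gamma(t_0)|$ and $|\gamma(t_n)-\gamma(t_0)|$ are comparable, or bypass the expansion entirely by arguing geometrically (once the two secants make an angle $<\pi$ at $\gamma(t_0)$, a small circle of radius $\pi/4$ with the required separation exists by elementary convexity)---but as written this step is incomplete.
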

\begin{proof}
Let $s_i\in\S^1$ be a sequence of points converging to $t$ from the left hand side (with respect to the orientation of $\S^1$). Since $\gamma$ has non-vanishing speed, it cannot be locally constant. Thus we may assume, after passing to a subsequence, that $\gamma(s_i)\neq\gamma(t)$. Then the secant rays $\ell_i$ in $\R^3$ which emanate from $\gamma(t)$ and pass through $\gamma(s_i)$ are well-defined. Let $\ell$ be a limit of $\ell_i$. Similarly, we can consider the secant rays $\ell_i'$ generated by points $s_i'\in\S^1$ converging to $t$ from the right hand side, and let $\ell'$ be a limit of $\ell_i'$. We claim that the angle between $\ell$ and $\ell'$ is $\pi$. Suppose not. Then there exist points $s$, $s'\in\S^1$ arbitrary close to $t$ and with $(s,s')\ni t$ such that the angle between the geodesic segments $\gamma(t)\gamma(s)$ and $\gamma(t)\gamma(s')$ in $\S^2$  is less than $\pi$. Consequently, there exists an open set $S$ of circles  of radius $\pi/4$ in $\S^2$ such that for every $C\in S$ we have $\gamma(s)$, $\gamma(s')$ lie strictly inside $C$  while $\gamma(t)$ lies strictly outside $C$.  Thus, by Lemma \ref{lem:2-points}, there exists a circle $C\in S$ which intersects $\gamma$ in only two points. So the portion of $\gamma$ which lies outside $C$ is a subset of $\gamma([s,s'])$. But since $s$ and $s'$ may be chosen arbitrarily close to $t$, the length of $\gamma([s,s'])$ may be arbitrarily small. Hence we obtain the desired contradiction via Lemma \ref{lem:outside-length}. So the angle between $\ell$ and $\ell'$ is $\pi$ as claimed. Now since $\ell$ and $\ell'$ where arbitrary limits of the right and left secant rays of $\gamma$ at $t$, and all these limits are tangent to $\S^2$, it follows that $\ell$ and $\ell'$ are unique. Hence $T_t\gamma=\ell\cup\ell'$ as desired. 
\end{proof}

Now for each $t\in\S^1$, the left and right unit tangent vectors of $\gamma$, 
$
u^\pm_\gamma(t)
$
are well-defined with $u^+_\gamma(t)=-u^-_\gamma(t)$.

\begin{lem}\label{lem:gamma-tangent}
Let $C\subset \S^2$ be a circle of radius $\pi/4$.
Suppose that there exists an interval $[t,s]\subset\S^1$  such that $\gamma(t)$ lies on $C$ while $\gamma((t,s])$ lies strictly inside $C$. Then $\gamma$ is transversal to $C$ at $\gamma(t)$.
\end{lem}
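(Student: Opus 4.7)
The plan is to argue by contradiction, assuming $\gamma$ is not transversal to $C$ at $\gamma(t)$, so the tangent line $T_t\gamma$ (a line by Lemma \ref{lem:tangent-cone}) is tangent to $C$ at $\gamma(t)$. I will produce a $2$-dimensional open set of centers $p'\in\S^2$ such that $C':=C_{\pi/4}(p')$ meets $\gamma$ in at least $3$ points, which contradicts Lemma \ref{lem:2-points}.

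Let $p$ be the center of $C$ and introduce the Lipschitz function $g(u):=\pi/4-d_{\S^2}(\gamma(u),p)$, so $g>0$ corresponds to points of $\gamma$ inside $C$, $g=0$ to points on $C$, and $g<0$ to points outside. By hypothesis $g(t)=0$ and $g>0$ on $(t,s]$, and the tangency translates into vanishing one-sided derivatives of $g$ at $t$, since motion along $T_t\gamma$ is first-order tangent to the level set $C$ at $\gamma(t)$. Since $\gamma$ is closed and enters the inside of $C$ at $\gamma(t)$, there is a first exit parameter $t_1\ne t$ with $\gamma(t_1)\in C$. First I would treat the main case, where $\gamma$ is locally inside $C$ on both sides of $t$, so $g>0$ on a punctured neighborhood of $t$. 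Choose $p'$ by moving $p$ a small distance $\epsilon>0$ along the geodesic from $\gamma(t)$ through $p$, past $p$ outward, so that $d_{\S^2}(\gamma(t),p')=\pi/4+\epsilon$. Then for $u$ near $t$ the perturbed function $g'(u):=\pi/4-d_{\S^2}(\gamma(u),p')$ satisfies $g'(u)\approx g(u)-\epsilon$, and since $g(t)=0<\epsilon$ while $g>0$ on the punctured neighborhood, the intermediate value theorem provides two zeros of $g'$ flanking $t$. These are two transverse intersections of $C'$ with $\gamma$ near $\gamma(t)$; combined with the persistent intersection of $C'$ with $\gamma$ near $\gamma(t_1)$, this gives $\#\gamma^{-1}(C')\ge 3$. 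All three intersections are stable under further small $2$-dimensional perturbations of $p'$, so this lower bound holds on a $2$-dimensional open neighborhood of $p'$, contradicting Lemma \ref{lem:2-points}.

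The remaining subcase, where $\gamma$ crosses $C$ tangentially at $\gamma(t)$ (inside on one side and outside on the other), requires a genuine $2$-parameter perturbation. Here $g$ has a cubic-like local structure at $t$ forced by the vanishing one-sided derivative together with the sign-change; combining the outward radial push of $p$ used above with a tangential shift of $p$ along $C$, the perturbed $g'$ acquires three local zeros near $t$ for perturbation parameters in an open $2$-dimensional region, and together with the persistent intersection near $\gamma(t_1)$ this yields $\#\gamma^{-1}(C')\ge 4$ on an open set of $p'$, again contradicting Lemma \ref{lem:2-points}. The main obstacle throughout is verifying the $2$-dimensional openness of the family of bad centers, which reduces to a local diffeomorphism check between the $2$-parameter perturbations of $p$ and the local perturbation parameters of $g$ near $t$.
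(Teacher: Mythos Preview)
Your approach differs from the paper's. The paper never tries to manufacture three or more intersections directly; instead it tilts $C$ to a circle $C'$ through $\gamma(t)$ that is \emph{transversal} to $C$ with $u_\gamma^+(t)$ pointing outside $C'$. Then $\gamma$ briefly exits $C'$ at some $r\in(t,s)$ and re-enters by $s$; after a small push to a $C''$ with $\gamma(t),\gamma(s)$ strictly inside and $\gamma(r)$ strictly outside, Lemma~\ref{lem:2-points} forces $C''$ to meet $\gamma$ in exactly two points, so the entire exterior arc of $\gamma$ relative to $C''$ lies in $\gamma([t,s])$, which can be made arbitrarily short --- contradicting Lemma~\ref{lem:outside-length}. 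The point is that the paper works entirely on the interval $(t,s]$ supplied by the hypothesis and never examines what $\gamma$ does to the left of $t$.

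Your argument, by contrast, needs control on both sides of $t$, and this is where the gaps appear. First, the dichotomy ``inside on both sides'' versus ``tangential crossing'' is not exhaustive: on the left of $t$ the curve could lie on $C$ along a sequence accumulating at $t$, or along an entire subarc, or oscillate between inside and outside --- none of which your two cases cover. Second, and more seriously, the ``cubic-like local structure'' you invoke in the crossing subcase is not available here. At this point in the argument (before Lemma~\ref{lem:C11}) we only know $\gamma$ is Lipschitz with $T_t\gamma$ a line; $\gamma$ is not yet known to be $C^1$, so $g$ has no Taylor expansion at $t$ beyond the vanishing first-order term, and the unfolding-of-a-cubic picture that would yield three nearby zeros under a two-parameter perturbation cannot be justified. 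You flag the two-dimensional openness as ``the main obstacle,'' but it is precisely this missing regularity that makes it a genuine obstacle rather than a routine check. The paper's route via Lemma~\ref{lem:outside-length} sidesteps all of this by never needing a multiplicity count higher than two.
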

\begin{proof}
Suppose towards a contradiction that  $\gamma$ is tangent to $C$ at $\gamma(t)$.  Let $C'$ be a circle of radius $\pi/4$ in $\S^2$ which passes through $\gamma(t)$ and  is transversal to $C$ at $\gamma(t)$ with $u^+_\gamma(t)$ pointing outside $C'$. Then there exist $r\in (t,s)$ such that $\gamma(r)$ lies strictly outside $C'$. Furthermore, choosing $C'$ sufficiently close to $C$, we can ensure  that $\gamma(s)$ lies strictly inside $C'$. Next, by perturbing the center of $C'$, we may find  another circle $C''$ of radius $\pi/4$ such that $\gamma(t)$ and $\gamma(s)$ lie strictly inside $C''$ while $\gamma(r)$ lies strictly outside $C''$. Since $C''$ may be chosen freely from an open set of circles in $\S^2$, we may assume by Lemma \ref{lem:2-points} that $C''$ intersects $\gamma$ at only two points. Thus the portion of $\gamma$ lying outside $C''$ is a subset of $\gamma([t,s])$. But $\gamma([t,s])$  can  have arbitrarily small length, since we may choose $s$ as close to $t$ as desired. Thus we obtain a contradiction by Lemma \ref{lem:outside-length}.
\end{proof}

We say that a circle $C\subset\S^2$ \emph{supports} $\gamma$ at a point $p$ of $\gamma$ provided that $C$ passes through $p$ and $\gamma$ lies on one side of $C$. Furthermore, if the radius of $C$ is less than $\pi/2$, then we assume that $\gamma$ lies outside $C$.

\begin{lem}\label{lem:support-circles}
Through each point of $\gamma$ there pass a pair of support circles of radius $\pi/4$ which  lie outside each other.
\end{lem}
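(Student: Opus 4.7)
The plan is to construct the two support circles explicitly from $\gamma$'s tangent line at $p$, and then verify their global support via a perturbation argument against Lemma \ref{lem:2-points}, in the spirit of the proof of Lemma \ref{lem:gamma-tangent}. Fix $p = \gamma(t_0)$. By Lemma \ref{lem:tangent-cone} the tangent cone $T_{t_0}\gamma$ is a line; let $u \in T_p\S^2$ span it and let $v \in T_p\S^2$ be a unit vector orthogonal to $u$. Set $c^\pm := \exp_p(\pm(\pi/4) v)$ and $C^\pm := \partial B_{\pi/4}(c^\pm)$. Each $C^\pm$ passes through $p$ with tangent line $\mathrm{span}(u)$, hence is tangent to $\gamma$ at $p$. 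Since $c^+$ and $c^-$ are the two endpoints of a geodesic of length $\pi/2$ through $p$, we have $d(c^+, c^-) = \pi/2$, and the closed disks $\ol{B_{\pi/4}(c^\pm)}$ meet only at $p$; equivalently, $C^+$ and $C^-$ lie outside each other.

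It remains to show that each $C^\pm$ supports $\gamma$; by symmetry I treat only $C^+$. I first claim that $\gamma$ lies strictly outside $C^+$ on a punctured neighborhood of $t_0$. Otherwise a standard first-entry construction produces an interval $[s_1, s_2]$ with $s_1$ close to $t_0$, $\gamma(s_1) \in C^+$, and $\gamma((s_1, s_2]) \subset \inte(C^+)$. Lemma \ref{lem:gamma-tangent} then forces $\gamma$ to be transversal to $C^+$ at $\gamma(s_1)$. If one can take $s_1 = t_0$, this immediately contradicts the tangency $T_{t_0}\gamma = T_p C^+$; otherwise one adapts the perturbation argument from the proof of Lemma \ref{lem:gamma-tangent} (shifting the center of $C^+$ slightly and exploiting the tangential touching at $p$ together with the transverse crossing at $\gamma(s_1)$) to produce an open family of circles of radius $\pi/4$ meeting $\gamma$ in at least four points, contradicting Lemma \ref{lem:2-points}.

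With local support in hand, suppose for contradiction that $q := \gamma(s_0) \in \inte(C^+)$ for some $s_0$; by local support, $s_0$ is bounded away from $t_0$. Let $w$ be the unit tangent at $c^+$ pointing toward $p$ along the connecting geodesic, $w^\perp$ a unit tangent orthogonal to $w$, and set $c_{\eta,\xi} := \exp_{c^+}(\eta w + \xi w^\perp)$, $C_{\eta,\xi} := \partial B_{\pi/4}(c_{\eta,\xi})$. A local coordinate computation in $T_p\S^2$ (in which $C^+$ has the form $y = 2x^2/\pi + O(x^3)$ and $\gamma$ satisfies $y_\gamma(t) - 2 x_\gamma(t)^2/\pi < 0$ for $t$ near but not equal to $t_0$) shows that for $(\eta, \xi)$ in a small open subset with $\eta > 0$, $C_{\eta,\xi}$ crosses $\gamma$ transversally at two points near $t_0$. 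Meanwhile, for $\eta$ small, continuity preserves $q \in \inte(C_{\eta,\xi})$, so $\gamma$ enters and exits $\inte(C_{\eta,\xi})$ at two further crossings near $s_0$. Since $s_0$ is bounded away from $t_0$, all four crossings are distinct, yielding $\#\gamma^{-1}(C_{\eta,\xi}) \geq 4$ on an open set of centers in $\S^2$, contradicting Lemma \ref{lem:2-points}. Hence $C^+$ supports $\gamma$.

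The main technical obstacle is the local support step, in particular the subcase $s_1 \neq t_0$: because the tangent-line map $t \mapsto T_t\gamma$ need not be continuous a priori, the transversality granted by Lemma \ref{lem:gamma-tangent} at $\gamma(s_1)$ cannot be used directly to contradict the tangency at $p$ by letting $s_1 \to t_0$. One must instead push the center $c^+$ so that the tangential touching at $p$ creates two additional crossings while the transverse crossing at $\gamma(s_1)$ persists, yielding an open family of circles of radius $\pi/4$ that intersect $\gamma$ in at least four points.
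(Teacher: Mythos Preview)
Your approach uses the right mechanism---perturb a tangent circle of radius $\pi/4$ to force four intersections with $\gamma$ and contradict Lemma~\ref{lem:2-points}---but the local/global split creates a genuine gap. The intermediate claim that $\gamma$ lies \emph{strictly} outside $C^+$ on a punctured neighborhood of $t_0$ can simply fail: nothing so far rules out $\gamma$ coinciding with an arc of $C^+$ through $p$ (and the eventual minimizer does exactly this), in which case no first-entry interval into $\inte(C^+)$ exists near $t_0$ and your case analysis never starts. Your global step then explicitly relies on the strict inequality $y_\gamma(t)-2x_\gamma(t)^2/\pi<0$ near $t_0$ to produce two crossings there after perturbation, so it cannot proceed either. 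The subcase $s_1\neq t_0$ that you flag is the same circularity in another guise: extracting two extra crossings from the tangential contact at $p$ again presupposes strict exteriority of $\gamma$ on both sides of $t_0$, which is precisely the claim in question.

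The paper's argument avoids all of this in one stroke. Given any $t'$ with $\gamma(t')$ strictly inside $C$, let $I$ be the closure of the component of $\gamma^{-1}(\inte(D))$ containing $t'$. Lemma~\ref{lem:gamma-tangent} forces $\gamma$ to be transversal to $C$ at \emph{both} endpoints of $I$; since $\gamma$ is tangent to $C$ at $t$, the point $t$ cannot be an endpoint of $I$, and since $\gamma(t)\in C$ it is not interior to $I$ either. Hence $t\notin I$, and choosing $s_1,s_2$ just outside $I$ near its endpoints (where transversality puts $\gamma$ strictly outside $C$) yields the cyclic arrangement $t,s_1,t',s_2$ with $\gamma(t)\in C$, $\gamma(t')$ strictly inside, and $\gamma(s_i)$ strictly outside. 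A single perturbation of $C$ then gives four crossings for an open family of circles. No separate local analysis near $t$ is needed, and the argument is indifferent to whether $\gamma$ happens to lie on $C$ near $t$.
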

\begin{proof}
Let $C$ be one of the two circles of radius $\pi/4$  tangent to $\gamma$ at $\gamma(t)$. Suppose there is a point $t'\in\S^1$ such that $\gamma(t')$ lies strictly inside $C$. Let $D$ be the disk of radius $\pi/4$ bounded by $C$, and $I$ be the closure of the  component of $\gamma^{-1}(\inte(D))$ which contains $t'$. By Lemma \ref{lem:2-points}, $\gamma$ cannot lie entirely in $C$. Thus $I$ is a proper interval in $\S^1$. By Lemma \ref{lem:gamma-tangent}, $\gamma$ is transversal to $C$ at the end points of $I$. In particular there are points $s_1$, $s_2\in\S^1$ close to each of the end points of $I$ such that $\gamma(s_i)$ lie strictly outside $C$, and $t$, $s_1$, $t'$, $s_2$ are arranged cyclically in $\S^1$.
Perturbing the center of $C$, we may find a circle $C'$ of radius $\pi/4$ such that $\gamma(t)$ and $\gamma(t')$ lie strictly inside $C'$, while $\gamma(s_i)$  lie strictly outside $C'$. It follows that $C'$ intersects $\gamma$ at least $4$ times, which contradicts Lemma \ref{lem:2-points}, since $C'$ may be chosen freely from an open set of circles in $\S^2$.
\end{proof}

In the terminology of \cite{ghomi-howard2014}, the conclusion of Lemma \ref{lem:support-circles} means that $\gamma$ has \emph{double positive support}. Using this lemma we next show:

\begin{lem}\label{lem:simple}
$\gamma$ is simple.
\end{lem}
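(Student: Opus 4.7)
The plan is to derive a contradiction with Lemma \ref{lem:2-points} by exhibiting an open family of radius-$\pi/4$ circles on $\S^2$, each meeting $\gamma$ in at least $4$ parameter preimages. Suppose for contradiction that $\gamma$ is not simple, so $\gamma(t_1) = \gamma(t_2) = p$ for distinct $t_1, t_2 \in \S^1$. Since $\gamma$ has constant positive speed, it cannot be constant on any arc, so one can choose $s, s'$ lying on the two different open arcs of $\S^1 \setminus \{t_1, t_2\}$ with $q := \gamma(s) \neq p$ and $q' := \gamma(s') \neq p$. Then I would pick a circle $C' \subset \S^2$ of radius $\pi/4$ whose center lies near $p$ at spherical distance slightly less than $\pi/4$ in a direction away from both $q$ and $q'$, so that $p$ lies strictly inside $C'$ while $q, q'$ both lie strictly outside $C'$; the existence of such a center is immediate since $q, q' \neq p$, and the conditions are all open in the center of $C'$ on $\S^2$.

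Let $U_1, U_2$ be the connected components of the open set $\gamma^{-1}(\inte(C')) \subset \S^1$ containing $t_1$ and $t_2$ respectively. Since $s, s'$ lie on the two arcs joining $t_1$ to $t_2$ but $\gamma(s), \gamma(s')$ are outside $\inte(C')$, neither of the two arcs is contained in $\gamma^{-1}(\inte(C'))$; hence no connected arc of $\gamma^{-1}(\inte(C'))$ can contain both $t_1$ and $t_2$, forcing $U_1 \neq U_2$. Each $U_i$ is a proper open arc of $\S^1$, so $\partial U_i$ consists of two points, at each of which $\gamma$ maps into $C'$. By Lemma \ref{lem:gamma-tangent}, applied to a short sub-arc of $U_i$ adjacent to each boundary point (on either side), $\gamma$ is transversal to $C'$ at each of these four boundary points. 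In particular $U_1$ and $U_2$ cannot share a boundary point, for a shared point would have $\gamma$ remaining inside $C'$ on both of its sides, incompatible with a transversal crossing. Thus $\#\gamma^{-1}(C') \geq 4$, with all four crossings transversal.

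Since the strict inside/outside conditions at $p, q, q'$ and the transversality of the four crossings are all stable under small perturbations of the center of $C'$ in $\S^2$, the bound $\#\gamma^{-1}(C') \geq 4$ holds on an open set of radius-$\pi/4$ circles, contradicting Lemma \ref{lem:2-points}. The main technical point is the transversality argument via Lemma \ref{lem:gamma-tangent}, which simultaneously yields the distinctness of the four boundary preimages (ruling out the case $\partial U_1 \cap \partial U_2 \neq \emptyset$) and the persistence of the $\geq 4$ count in an open neighborhood of circles.
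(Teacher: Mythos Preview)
Your argument is correct, and the overall strategy (produce an open family of $\tfrac{\pi}{4}$--circles with $\#\gamma^{-1}(C')\geq 4$, contradicting Lemma~\ref{lem:2-points}) matches the paper's. The route differs in the key lemma invoked: the paper obtains $C'$ by starting from a \emph{support} circle through the double point $p$ furnished by Lemma~\ref{lem:support-circles} (so $\gamma$ lies entirely outside $C$) and then perturbing so that $p$ moves strictly inside while two chosen points $\gamma(r_1),\gamma(r_2)$ on the two arcs remain strictly outside. You bypass Lemma~\ref{lem:support-circles} altogether and instead construct $C'$ directly; the price is that you must then separately argue the four parameter preimages are distinct, which you do via Lemma~\ref{lem:gamma-tangent} and Lemma~\ref{lem:tangent-cone} (transversality at each boundary point of $U_1,U_2$ forces a genuine crossing, so $\partial U_1\cap\partial U_2=\emptyset$). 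Your route is slightly more self-contained in that it does not rely on the double-support result, and the transversality step makes the distinctness of the four preimages fully explicit.

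One point to tighten: the existence of the center $c$ with $d(c,p)<\tfrac{\pi}{4}$ and $d(c,q),d(c,q')>\tfrac{\pi}{4}$ is asserted as ``immediate since $q,q'\neq p$,'' but this deserves a sentence. A clean justification is that for each $x\in\{q,q'\}$ with $x\neq p$, the closed disk $\overline{D_{\pi/4}(x)}$ meets the boundary circle $\partial D_{\pi/4}(p)$ in an arc of angular width strictly less than $\pi$ (by the spherical law of cosines, the half-width is $\arccos(\tan(d(p,x)/2))<\tfrac{\pi}{2}$ when $0<d(p,x)<\tfrac{\pi}{2}$, and the disks are disjoint when $d(p,x)\geq\tfrac{\pi}{2}$); hence two such arcs cannot cover $\partial D_{\pi/4}(p)$, and moving slightly inward from an uncovered boundary point gives the desired center. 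With this added, the argument is complete; the openness of the resulting family is then clear, since for every nearby center the same construction (and the same appeal to Lemma~\ref{lem:gamma-tangent}) reproduces the bound $\#\gamma^{-1}(C')\geq 4$.
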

\begin{proof}
Suppose  that there are distinct points $t$, $s\in \S^1$ with $\gamma(t)=\gamma(s)$. Let $r_1$, $r_2$ be points of $\S^1$ which lie in the interior of different segments of $\S^1$ determined by $s$ and $t$, so that  $s$, $r_1$, $t$, $r_2$ are cyclically arranged in $\S^1$. 
By Lemma \ref{lem:support-circles}, there exists  a circle $C$ of radius $\pi/4$ in $\S^2$ which supports $\gamma$ at $\gamma(t)=\gamma(s)$. Furthermore, by Lemma \ref{lem:2-points}, $\gamma$ cannot lie completely on $C$. So we may choose $r_i$ so that at least one of the points
$\gamma(r_1)$, $\gamma(r_2)$ lies strictly outside $C$. Then we may translate $C$ to obtain a circle $C'$ of the same radius such that $\gamma(t)=\gamma(s)$ lies strictly inside $C'$ while $\gamma(r_i)$ lie strictly outside $C'$. Hence $C'\cap\gamma$ consist of at least $4$ points. Furthermore  $C'$ may be chosen from an open set of circles of radius $\pi/4$ in $\S^2$. Thus we obtain a violation of Lemma \ref{lem:2-points}.
\end{proof}

For a planar curve,  double positive support is equivalent to  \emph{positive reach} introduced by Federer \cite{federer1959}. Thus the last two lemmas imply that $\gamma$ has positive reach.

\begin{lem}\label{lem:C11}
$\gamma$ is $\C^{1,1}$.
\end{lem}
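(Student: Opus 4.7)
The plan is to deduce $\C^{1,1}$ regularity from the characterization of $\C^{1,1}$ submanifolds in \cite{ghomi-howard2014}, which is precisely the tool the authors flag in the introduction for this step. That characterization asserts (roughly) that a closed topologically embedded submanifold of Euclidean space is $\C^{1,1}$ if and only if it has positive reach and its tangent cone at every point is a linear subspace. So the task reduces to verifying these two hypotheses for $\gamma$, viewed as a subset of $\R^3$.

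First I would record what is already established: Lemma \ref{lem:simple} says $\gamma$ is a simple closed curve, hence a topologically embedded circle in $\S^2\subset\R^3$; and Lemma \ref{lem:tangent-cone} says the tangent cone $T_t\gamma$ is a line for every $t\in\S^1$, which is the linearity-of-tangent-cone hypothesis. For positive reach I would use Lemma \ref{lem:support-circles}: at every point $p=\gamma(t)$ there pass two circles of spherical radius $\pi/4$, lying outside each other, which support $\gamma$ at $p$. Each such spherical circle bounds a round Euclidean disk in $\R^3$ whose radius depends only on $\pi/4$ (and the radius of $\S^2$), so these disks provide uniform two-sided round support of $\gamma$ in $\R^3$. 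This is the ``double positive support'' condition in the sense of \cite{ghomi-howard2014}, and it is equivalent to positive reach for $1$-dimensional sets (cf.\ Federer \cite{federer1959}).

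With both hypotheses verified, applying the main theorem of \cite{ghomi-howard2014} produces a $\C^{1,1}$ parametrization of the image of $\gamma$. Since $\gamma$ is a constant-speed, simple, closed curve, reparametrizing by the $\C^{1,1}$ parametrization obtained in this way gives the desired regularity.

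The main obstacle I anticipate is the mild bookkeeping needed to translate ``double positive support by spherical circles of radius $\pi/4$'' into the precise framework of \cite{ghomi-howard2014} in $\R^3$: one has to check that the supporting disks are uniform in $p$ (which they are, since $\pi/4$ is fixed), and that they indeed lie on opposite sides of the tangent line $T_t\gamma$ in $\R^3$ rather than merely on opposite sides on $\S^2$. Both are essentially automatic, so once this translation is in place, the theorem of \cite{ghomi-howard2014} applies directly and finishes the proof.
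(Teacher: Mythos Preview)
Your proposal uses the same ingredients as the paper (Lemmas \ref{lem:tangent-cone}, \ref{lem:support-circles}, \ref{lem:simple}, and the characterization from \cite{ghomi-howard2014}), but the paper routes the argument through a stereographic projection rather than working directly in $\R^3$. Concretely, the paper picks a point of $\S^2\setminus\gamma$, projects to obtain a planar curve $\tilde\gamma\subset\R^2$, observes that stereographic projection preserves circles so the double support by spherical circles of radius $\pi/4$ becomes double support by planar circles for $\tilde\gamma$, and then invokes \cite[Thm.~1.2]{ghomi-howard2014} (together with \cite{lytchak}) in the planar setting. The tangent-cone lemma is used there not as a separate hypothesis of the $\C^{1,1}$ theorem, but to argue that the two support circles at each point must lie on \emph{opposite} sides of $\gamma$ (otherwise the tangent cone would be a ray, i.e.\ a cusp). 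The payoff of the paper's detour is that for planar curves the passage from ``double positive support'' to the hypotheses of \cite{ghomi-howard2014} is immediate. Your direct $\R^3$ route is plausible, but the step you flag as ``mild bookkeeping'' is less automatic than you suggest: the flat Euclidean disks bounded by the spherical support circles are $2$-dimensional and do not by themselves furnish the uniform $3$-ball support that positive reach of a curve in $\R^3$ demands, so you would still need an extra argument (e.g.\ combining the intrinsic support on $\S^2$ with the smoothness of $\S^2$ itself) to close that gap.
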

\begin{proof}
Since $\gamma$ has finite length, there exists a point in $\S^2\setminus\gamma$, which we may assume to be $(0,0,1)$ after a rotation. Let $\pi\colon\S^2\setminus\{(0,0,1)\}\to\R^2$ be the stereographic projection, and set $\tilde\gamma:=\pi\circ\gamma$. Since $\pi$ preserve circles, and by Lemma \ref{lem:support-circles} $\gamma$ has 
double positive support, $\tilde\gamma$ has double positive support as well.  Furthermore, by Lemma \ref{lem:simple} $\gamma$ has two sides in $\S^2$. The support circles of $\gamma$ must lie in opposite sides of $\gamma$ at each point; otherwise the tangent cone would be a ray (or $\gamma$ would have a cusp) which is not possible by Lemma \ref{lem:tangent-cone}. Thus the support circles of $\tilde\gamma$ must lie on the opposite sides of $\tilde\gamma$ as well.  Consequently $\tilde\gamma$ is $\C^{1,1}$ by \cite[Thm. 1.2]{ghomi-howard2014}; see also \cite[prop. 1.4]{lytchak}. 
\end{proof}

Next we observe that:

\begin{lem}\label{lem:antipodal}
Let $C$ be a support circle of $\gamma$ of radius $\pi/4$. Then $C\cap \gamma$ is either a pair of antipodal points of $C$ or  else is a semicircle of $C$. 
\end{lem}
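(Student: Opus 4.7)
My approach is to run several perturbation arguments that pit Lemma~\ref{lem:2-points} and the standing hypothesis $d(\cdot,\gamma)\le\pi/4$ on $\S^2$ against the tangency structure of $\gamma$ to $C$. Write $p$ for the center of $C$; by Lemmas~\ref{lem:support-circles} and \ref{lem:gamma-tangent} the curve $\gamma$ lies strictly outside $C$ away from $C\cap\gamma$ and is tangent to $C$ at every point of $C\cap\gamma$.

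First I would establish the key principle that $C\cap\gamma$ is not contained in any open semicircle of $C$. Were this false, I could choose a vector $v\in T_p\S^2$ with $\langle q-p,v\rangle<0$ for every $q\in C\cap\gamma$, and translate $p$ by a small spherical distance $\epsilon>0$ in direction $v$ to obtain a circle $C_\epsilon$ of radius $\pi/4$. A local Taylor computation at each contact point $q\in C\cap\gamma$ (with separate cases for isolated tangencies, interior arc points, and arc endpoints) shows that $\gamma$ remains strictly outside $C_\epsilon$ in a neighborhood of $q$, since the outward tangency of $\gamma$ to $C$ keeps it on the "wrong side" of the perturbed circle. As $\gamma$ is already strictly outside $C$ away from $C\cap\gamma$, it stays outside $C_\epsilon$ everywhere, so $d(p_\epsilon,\gamma)>\pi/4$, contradicting the hypothesis of Proposition~\ref{prop:baseball}. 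In particular, $|C\cap\gamma|\ge 2$ since a single point lies in an open semicircle.

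I would then split into the finite and arc cases. \emph{Finite case}: if $x_1,x_2\in C\cap\gamma$ are non-antipodal on $C$, picking $v$ in the open wedge $\{\langle x_i-p,v\rangle>0\}$ and applying the tangent-perturbation analysis used in Lemma~\ref{lem:gamma-tangent} produces two transverse crossings of $\gamma$ with $C_\epsilon$ near each $x_i$, giving $|\gamma\cap C_\epsilon|\ge 4$ on an open set of perturbations and contradicting Lemma~\ref{lem:2-points}. Hence every pair in $C\cap\gamma$ is antipodal on $C$, forcing $|C\cap\gamma|=2$, an antipodal pair. \emph{Arc case}: let $A\subset C\cap\gamma$ be a maximal arc of angular length $2\alpha\in(0,2\pi)$; simplicity of $\gamma$ rules out $A=C$. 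If $2\alpha<\pi$ and $C\cap\gamma=A$, then $A$ lies in an open semicircle, contradicting the key principle; if $2\alpha<\pi$ with an extra contact $x\in C\cap\gamma\setminus A$ lying outside any open semicircle containing $A$, a direction $v$ that places one endpoint of $A$ and the point $x$ simultaneously inside $C_\epsilon$ yields $|\gamma\cap C_\epsilon|\ge 1+1+2=4$ (one crossing where $A$ exits $C_\epsilon$, one where the complementary arc $\gamma_c$ exits $C_\epsilon$ at the endpoint of $A$ inside it, and two transverse crossings near $x$ by tangent perturbation), again contradicting Lemma~\ref{lem:2-points}. For $2\alpha>\pi$, I would use the other-side support circle $C_-(y)$ at an interior arc point $y$: since $d_{\S^2}(p,p_-(y))=\pi/2$ forces $C$ and $C_-(y)$ to meet tangentially at the single point $y$, the already-established finite case applied to $C_-(y)\cap\gamma$ (provided $\gamma$ does not share an arc with $C_-(y)$) yields an antipodal pair $\{y,y'\}$ on $C_-(y)$, and a direct computation gives $y'=\sqrt 2\,p_-(y)-y\in\gamma_c$ (indeed $y'$ lies on the antipode circle $\tilde C=C_{\pi/4}(-p)$, so $y'\notin C$). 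The map $y\mapsto y'$ is length-preserving, embedding $A$ isometrically into $\gamma_c$; combined with $L(\gamma)=4\pi/\sqrt 2$ and Lemma~\ref{lem:outside-length} applied to $\gamma_c$, this forces $2\alpha=\pi$. Finally, the $2\alpha<\pi$ argument applied with the semicircle in place of $A$ rules out any further contact points, giving $C\cap\gamma=A$.

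The main obstacle will be the subcase $2\alpha>\pi$: verifying that $C_-(y)\cap\gamma$ is indeed finite (so that $\gamma$ does not also share an arc with $C_-(y)$) and extracting $2\alpha=\pi$ from the length budget both require careful treatment, possibly by a simultaneous analysis of all candidate arcs in $C\cap\gamma$ and their dual arcs on $\tilde C=C_{\pi/4}(-p)$ traced out by $y\mapsto y'$.
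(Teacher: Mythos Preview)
Your ``key principle'' (that $C\cap\gamma$ is not contained in any open semicircle) is correct and is exactly claim~(i) in the paper's proof. But from that point on you take a much harder road, and the case $2\alpha>\pi$ is a genuine gap that your own closing paragraph flags but does not resolve. The length-counting sketch via $y\mapsto y'$ does not give $2\alpha\le\pi$: the image of $A$ under this map is an arc of $\tilde C=C_{\pi/4}(-p)$ of the \emph{same} length as $A$, but you do not know that $\gamma_c$ traverses this arc consecutively, so you cannot simply add twice the distance $\pi/2$ between $C$ and $\tilde C$ to its length. Even granting that addition, the resulting inequality $2\sqrt2\,\pi-\sqrt2\,\alpha\ge \sqrt2\,\alpha+\pi$ only gives $\alpha\le\pi-\pi/(2\sqrt2)$, not $\alpha\le\pi/2$. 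And the proviso ``$C_-(y)\cap\gamma$ is finite'' is precisely an instance of the lemma you are trying to prove, applied to the support circle $C_-(y)$; you cannot assume it. Your crossing count in the subcase ``$2\alpha<\pi$ with an extra contact $x$'' is also delicate: at an arc endpoint $a_1$ the branch $\gamma_c$ is tangent to $C$ from outside, and once $a_1$ is pushed inside $C_\epsilon$ it is not clear without further argument that $\gamma_c$ exits $C_\epsilon$ near $a_1$ rather than farther along.

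The paper bypasses all of this with a single connectedness claim that replaces your entire case analysis: \emph{(ii) every open semicircle $S$ of $C$ meets $\gamma$ in a connected set.} The proof is one perturbation. If $S\cap\gamma$ were disconnected, pick $\gamma(t_1),\gamma(t_2)\in S$ in different components; since $\gamma\not\subset C$, one of the two arcs of $\gamma$ between them contains a point $\gamma(s)$ strictly outside $C$, and the other arc either leaves $C$ (giving a second such point $\gamma(s')$) or lies on $C$ and hence reaches the complementary semicircle (giving $\gamma(s')$ there). Now slide the center of $C$ toward the midpoint of $S$: both $\gamma(t_i)$ go strictly inside the perturbed circle $C'$ while $\gamma(s),\gamma(s')$ stay strictly outside, and since $t_1,s,t_2,s'$ are cyclically ordered, $\gamma$ meets $C'$ at least four times, contradicting Lemma~\ref{lem:2-points}. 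Claims (i) and (ii) together force $C\cap\gamma$ to be either a pair of antipodal points, a closed semicircle, or all of $C$ (the last ruled out by simplicity). In particular (ii) disposes of your hard case immediately: an arc of angular length $>\pi$ has both endpoints in a common open semicircle $S$, but the arc itself exits $S$, so $S\cap\gamma$ is disconnected.
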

\begin{proof}
We claim that (i) every closed semicircle of $C$ intersects $\gamma$, and (ii) every open semicircle of $C$ intersects $\gamma$ in a connected set. These properties easily imply that  $\gamma\cap C$ is either a pair of antipodal points of $C$, a closed semicircle of $C$, or the entire $C$. The last possibility is not allowed, because by Lemma \ref{lem:simple} $\gamma$ is simple; therefore, if $\gamma$ covers $C$, it must coincide with $C$, which would violate Lemma \ref{lem:2-points}. So it remains to establish the claims.
To see (i) suppose that there exists a closed semicircle of $C$ which does not intersect $\gamma$. Then
moving the center of $C$ by a small distance towards the center of  that semicircle yields
a circle $C'$ of radius $\pi/4$ disjoint from $\gamma$. Obviously all circles of radius $\pi/4$ which are close to $C'$ will be disjoint from $\gamma$ as well, which would violate Lemma \ref{lem:2-points}. 
To see (ii) suppose that there exists an open semicircle $S$ of $C$ which intersects $\gamma$ in a disconnected set.
Then there exist  points 
$t_1$, $t_2$, $s\in\S^1$, with $s\in(t_1,t_2)$ such that $\gamma(t_i)\in S$  while $\gamma(s)$ lies strictly outside $C$. Either  $\gamma((t_2,t_1))$ lies entirely on $C$ or not. In the former case there exist a point $s'\in(t_2, t_1)$ such that 
$\gamma(s')$ lies in the open semicircle of $C$ which is disjoint from $S$; in the latter case
there exists a point $s'\in(t_2, t_1)$ such  that $\gamma(s')$ lies strictly outside $C$. In either case,
moving the center of $C$ by a small distance towards the midpoint of $S$ will yield a circle $C'$ of radius $\pi/4$ such that
$\gamma(t_i)$ lie strictly inside  $C'$ while $\gamma(s)$, $\gamma(s')$ lie strictly outside $C'$. But $t_1$, $s$, $t_2$, $s'$ are cyclically arranged in $\S^1$. So perturbing the center of $C'$ yields an open set of circles of radius $\pi/4$ each of which intersects $\gamma$ at least $4$ times, which again contradicts Lemma \ref{lem:2-points}. 
\end{proof}

The last lemma  leads to the proof of Proposition \ref{prop:baseball} via the notion of \emph{nested partitions of a circle} employed in \cite{ghomi:rosenberg}, see also \cite{umehara2,thorbergsson&umehara}. By Lemma \ref{lem:simple}, $\gamma$ bounds a topological disc $D\subset\S^2$. By Lemmas \ref{lem:support-circles} and \ref{lem:C11} through each point $p\in\gamma$ there passes a circle $C_p$ of  radius $\pi/4$ which lies in $D$. It follows from Lemma \ref{lem:C11} that $C_p$ is unique. Thus if we set
 $
 [p]:=\gamma\cap C_p, \;\text{and}\; P:=\{[p]\mid p\in\gamma\},
 $
then $P$ will be a partition of $\gamma$. A partition $P$ of a topological circle is  \emph{nested} provided that no element of $P$ separates the components of any other element, i.e.,  for every $[p]\in P$ and $q\in\gamma\setminus [p]$, $[q]$ lies in a connected component of $\gamma\setminus [p]$. 
 
 \begin{lem}
 The partition $P$ of $\gamma$ is nested. 
 \end{lem}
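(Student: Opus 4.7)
The plan is to argue by contradiction: assume $[p]\neq[q]$ are elements of $P$ with $[q]$ meeting both components of $\gamma\setminus[p]$. By Lemma \ref{lem:antipodal}, each of $[p]$ and $[q]$ is either a pair of antipodal points or a semicircle of its support circle. A semicircle is a connected sub-arc of $\gamma$, so if $[p]$ is a semicircle then $\gamma\setminus[p]$ is connected and the claim holds vacuously; if $[p]=\{p_1,p_2\}$ is antipodal but $[q]$ is a semicircle, then uniqueness of $C_p$ (via Lemma \ref{lem:C11}) forces $[p]\cap[q]=\emptyset$, since a common point would yield $C_p=C_q$, and the connected set $[q]$ lies in a single component of $\gamma\setminus[p]$. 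This reduces me to the case $[p]=\{p_1,p_2\}$ and $[q]=\{q_1,q_2\}$, both antipodal pairs, disjoint, with $q_1\in\gamma_1$ and $q_2\in\gamma_2$, where $\gamma_1,\gamma_2$ are the components of $\gamma\setminus[p]$.

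I next set up the topology inside the disc $D$ bounded by $\gamma$. Let $D_p, D_q$ denote the open disks of radius $\pi/4$ bounded by $C_p, C_q$; both lie in $\inte D$ and are disjoint from $\gamma$. Label the two open crescent components of $\inte D\setminus\ol{D_p}$ as $R_1,R_2$ so that $\gamma_i\subset\partial R_i$, and the two open semicircles of $C_p\setminus\{p_1,p_2\}$ as $\sigma_1,\sigma_2$ so that $\sigma_i\subset\partial R_i$. Since $C_q$ is tangent to $\gamma$ at $q_i\in\gamma_i$ with $D_q$ on the inside of $\gamma$, $D_q\cap R_i\neq\emptyset$ near $q_i$ for each $i$. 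Taking any path $\phi\colon[0,1]\to D_q$ with $\phi(0)\in D_q\cap R_1$ and $\phi(1)\in D_q\cap R_2$, its first exit from $R_1$ lies in $\partial R_1\cap D_q$; because $D_q\cap\gamma=\emptyset$ and $\partial R_1\cap\inte D=\sigma_1$, this exit point belongs to $\sigma_1$. Hence $D_q\cap\sigma_1\neq\emptyset$, and by the symmetric argument $D_q\cap\sigma_2\neq\emptyset$.

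The contradiction then emerges from the intersection structure of $C_p\cap C_q$, which has $0$, $1$, or $2$ points since $C_p\neq C_q$ share the same radius $\pi/4$. If $|C_p\cap C_q|\leq 1$ (equal-radius spherical circles cannot be internally tangent, so this case is either disjoint or externally tangent), then $D_q\cap\ol{D_p}=\emptyset$, forcing $D_q\subset R_1\cup R_2$; but a connected set meeting each of the disjoint open sets $R_1, R_2$ is impossible. In the transversal case $C_p\cap C_q=\{x,y\}$, the set $D_q\cap C_p$ is exactly one of the two arcs of $C_p\setminus\{x,y\}$, call it $\alpha$; the complementary arc must contain both $p_1$ and $p_2$ since $p_1,p_2\in C_p\setminus D_q$ (using $D_q\cap\gamma=\emptyset$). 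For a single arc of $C_p\setminus\{x,y\}$ to contain the antipodal pair $\{p_1,p_2\}$, the cut points $x,y$ must lie in a common open semicircle $\sigma_k$, in which case $\alpha\subset\sigma_k$ and $D_q\cap\sigma_{k'}=\emptyset$ for $k'\neq k$, contradicting the previous paragraph. The main obstacle will be this final combinatorial step: verifying that the complementary arc must contain both $p_1$ and $p_2$ and deducing that $x,y$ are then confined to a common semicircle of $C_p$ is the geometric crux that rules out the interlocked configuration of the inscribed disks $D_p$ and $D_q$.
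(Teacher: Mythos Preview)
Your argument is correct. The reduction to the case where both $[p]$ and $[q]$ are antipodal pairs is the same as the paper's (it observes tersely that ``neither $C\cap\gamma$ nor $C'\cap\gamma$ is connected''), but the endgame differs. The paper finishes in two lines: each of the two arcs of $C_p$ between $p_1,p_2$ is a Jordan arc in $D$ joining boundary points and hence separates $D$; since the two arcs of $C_q$ run from $q_1\in\gamma_1$ to $q_2\in\gamma_2$, each must cross each arc of $C_p$, and the crossings occur in the interiors (the arc interiors miss $\gamma$), giving $\#(C_p\cap C_q)\geq 4$, impossible for distinct circles. Your route instead tracks the open disk $D_q$: you show it meets both open semicircles $\sigma_1,\sigma_2$ of $C_p$, then rule this out by a case split on $\#(C_p\cap C_q)$, the transversal case boiling down to the observation that $D_q\cap C_p$ is a single sub-arc of $C_p$ avoiding $p_1,p_2$ and therefore confined to one $\sigma_k$. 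Both approaches exploit the same separation topology; the paper's arc-counting is shorter and avoids the case analysis, while yours makes the role of the inscribed disks more explicit.
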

 \begin{proof}
 If not there are distinct support circles $C$, $C'$ of $\gamma$ of radius $\pi/4$ contained in $D$ such that
 $C$ has points  in different components of $\gamma\setminus C'$. In particular neither $C\cap\gamma$ nor $C'\cap\gamma$ is connected. So by Lemma \ref{lem:antipodal}, $C$ and $C'$ intersect $\gamma$ in  two points each, say $C\cap\gamma=\{p,q\}$ and $C'\cap\gamma=\{p',q'\}$. Each of the segments  $pq$ and $qp$ of $C$ separate $D$ into two components. Thus each of the segments $p'q'$ and $q'p'$ of $C'$  must intersect each of the segments $pq$ and $qp$. Furthermore, each of these intersections must occur in the interior of the segments, because the interior of each segment is disjoint from $\gamma$. Thus $C$ and $C'$ intersect at least $4$ times. So  $C=C'$, which is the desired contradiction. 
  \end{proof}
  
  Finally we  invoke the following  fact established in \cite[Lem. 2.2]{ghomi:rosenberg}. A partition is \emph{nontrivial} provided that it contains more than one element.
  
  \begin{lem}
  Any nontrivial nested partition of a topological circle  contains at least two elements which are connected subsets of the circle.
 \end{lem}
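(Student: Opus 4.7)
My plan is a two-case argument. If every element of $P$ is connected, then since $|P|\geq 2$ the conclusion is immediate. Otherwise, choose a disconnected element $[p]\in P$; its components separate $S$ into $m\geq 2$ complementary open arcs $A_1,\ldots,A_m$, and by nestedness each remaining element of $P$ is entirely contained in exactly one arc $A_j$. It will suffice to exhibit a connected element of $P$ inside each $A_j$, producing $m\geq 2$ distinct connected elements.

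To find a connected element inside a fixed arc $A=A_j$, I propose a (possibly transfinite) recursive construction of a strictly descending chain of saturated open sub-arcs $A=A_0\supsetneq A_1\supsetneq\cdots$, where an open arc $B\subseteq A$ is \emph{saturated} if $[r]\subseteq B$ for every $r\in B$ (nestedness ensures $A$ itself is saturated). At a successor stage $\alpha+1$, if $A_\alpha$ already contains a connected element $[r]$ we stop and output $[r]$; otherwise every element in $A_\alpha$ is disconnected, and picking any $[q_\alpha]\subseteq A_\alpha$ with $k_\alpha\geq 2$ components we let $A_{\alpha+1}$ be one of the $k_\alpha-1\geq 1$ components of $S\setminus[q_\alpha]$ lying strictly inside $A_\alpha$. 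Nestedness applied to $[q_\alpha]$ makes $A_{\alpha+1}$ saturated, and since its endpoints lie in $[q_\alpha]\subseteq A_\alpha$ we obtain the crucial strict containment $\overline{A_{\alpha+1}}\subset A_\alpha$. At limit ordinals we will extract $A_\lambda$ from $\bigcap_{\alpha<\lambda}A_\alpha$.

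Because the chain is strictly descending, the lengths $L(A_\alpha)$ form a strictly decreasing well-ordered sequence of positive reals, which must have countable order type (one inserts distinct rationals between consecutive terms). Hence the chain has countable length and must terminate. Termination happens either because some $A_\alpha$ already contains a connected element of $P$, or because at a limit stage the arcs collapse to a single point $\{x\}$; in the latter case the strict containment $\overline{A_{\alpha+1}}\subset A_\alpha$ keeps $x$ in the interior of every $A_\alpha$, and saturation then forces $[x]\subseteq\bigcap_\alpha A_\alpha=\{x\}$, so $[x]=\{x\}$ is a connected singleton element of $P$ contained in $A$.

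The main technical obstacle is the limit-ordinal step, since the intersection of nested open arcs need not itself be open, so extracting a saturated open $A_\lambda$ requires care. I plan to handle this by taking the interior $(u,v)$ of the closed intersection $[u,v]=\bigcap_{\alpha<\lambda}\overline{A_\alpha}$ and analyzing whether any element of $P$ meets both an endpoint of $[u,v]$ and the interior $(u,v)$. If no such element exists, the interior is saturated and the recursion proceeds from $A_\lambda:=(u,v)$; if some such element exists, then its structure of components inside $[u,v]$, combined with nestedness, either yields a connected element directly (for instance, if the offending element is itself connected) or provides a strictly smaller saturated open sub-arc from which to continue the descent. With this case analysis in place, the recursion produces the required connected element inside each complementary arc $A_j$, completing the proof.
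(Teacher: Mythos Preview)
The paper does not supply a proof of this lemma; it invokes \cite[Lem.~2.2]{ghomi:rosenberg} directly. So there is no argument in the present paper to compare against, and your proposal should be judged on its own merits.

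Your plan is sound, assuming (as in the paper's application, and as your argument implicitly requires) that the elements of $P$ are closed. The reduction to finding a connected element inside each complementary arc of a fixed disconnected element is clean, and the transfinite descent through saturated open sub-arcs is a valid way to carry this out. Your countability argument is correct: since $\overline{A_{\alpha+1}}\subset A_\alpha$ forces $L(A_\lambda)\le\inf_{\alpha<\lambda}L(A_\alpha)\le L(A_{\beta+1})<L(A_\beta)$ for every $\beta<\lambda$, the map $\alpha\mapsto L(A_\alpha)$ is strictly decreasing even across limit stages, so the recursion has countable length and must halt.

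The limit step, which you rightly flag, can be completed along the lines you sketch. Because $\overline{A_{\alpha+1}}\subset A_\alpha$ and $\lambda$ is a limit, the endpoints $u,v$ of $\bigcap_{\alpha<\lambda}\overline{A_\alpha}$ lie in every $A_\alpha$, so in fact $\bigcap_{\alpha<\lambda}A_\alpha=[u,v]$ and $[r]\subseteq[u,v]$ for all $r\in[u,v]$ by saturation. If $u=v$ you get the singleton element $[u]=\{u\}$. Otherwise examine $[u]$: if it is connected, output it; if it is disconnected, it is a closed proper subset of $[u,v]$ containing $u$, and the component of $S\setminus[u]$ lying immediately past the $u$-component of $[u]$ is a saturated open arc whose closure sits inside $[u,v]\subseteq A_\alpha$ for all $\alpha<\lambda$---take this as $A_\lambda$. (If $[u]$ is connected but $(u,v)$ still fails to be saturated, run the same analysis on $[v]$.) So your case analysis goes through without surprises.

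The machinery is heavier than one might hope for such a statement, but the argument is correct.
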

 
So there are  two distinct elements $[p_1]$, $[p_2]\in P$ such that $C_{p_i}\cap\gamma$ is a connected set. Consequently, by Lemma \ref{lem:antipodal}, $C_{p_i}\cap\gamma$ are semicircles.  Thus $\gamma$ contains a pair of disjoint semicircles which curve toward $D$. Similarly,  by repeating the above argument for the other domain $D'$ in $\S^2$ bounded by $\gamma$, we obtain
 two disjoint semicircles in $\gamma$ which curve toward  $D'$.  Since each
semicircle has length $\pi/\sqrt{2}=L(\gamma)/4$, the semicircles cover $\gamma$,  which completes the proof of Proposition \ref{prop:baseball}.

\section*{Acknowledgments}
We thank Joseph O'Rourke for posting the  sphere inspection problem on MathOverflow \cite{orourkeMO} which prompted this work, and commenters on MathOverflow, including Pietro Majer, Anton Petrunin, Jean-Marc Schlenker, and Gjergji Zaimi for their  observations.  Thanks also to Steven Finch for sending us a translation of  Zalgaller's article \cite{finch2019translation}.

\addtocontents{toc}{\protect\setcounter{tocdepth}{0}}

\addtocontents{toc}{\protect\setcounter{tocdepth}{1}}
\bibliography{references}

\end{document}